\pdfoutput=1

\documentclass[12pt]{article}
  
\usepackage{amssymb,amsmath,amsthm,geometry}
\usepackage[square,numbers]{natbib}
\bibliographystyle{abbrvnat}
\usepackage{IEEEtrantools}
\usepackage{mathtools}
\usepackage{tikz}
\usepackage{subcaption}
\usepackage{mathabx}
\usepackage{hyperref}
\usetikzlibrary{calc}
\usepackage{graphicx}
\usepackage{multirow}
\usepackage[title]{appendix}
\newcommand*{\argm}{\text{argmin}}
\newtheorem{theorem}{Theorem}

\newtheorem{corollary}{Corollary}[theorem]

\providecommand{\customgenericname}{}
\newcommand{\newcustomtheorem}[2]{%
  \newenvironment{#1}[1]
  {%
   \renewcommand\customgenericname{#2}%
   \renewcommand\theinnercustomgeneric{##1}%
   \innercustomgeneric
  }
  {\endinnercustomgeneric}
}

\newcustomtheorem{customthm}{Theorem}
\newcustomtheorem{customlemma}{Lemma}

\newcommand{\sumonn}{\sum_{i=1}^n}

\hypersetup{
    colorlinks=true,
    linkcolor=blue,
    filecolor = blue
, urlcolor = blue,
citecolor = blue
}

\title{The Stein Effect for Fr\'echet Means} 
\author{Andrew McCormack and Peter Hoff \\
Department of Statistical Science \\
Duke University} 
\date{\today}

\begin{document}
\maketitle

\begin{abstract}

The Fr\'echet mean is a useful description of location for a probability distribution on a metric space that is not necessarily a vector space.  This article considers simultaneous estimation of multiple Fr\'echet means from a decision-theoretic perspective, and in particular, the extent to which the unbiased estimator of a Fr\'echet mean can be dominated by a generalization of the James-Stein shrinkage estimator. It is shown that if the metric space satisfies a non-positive curvature condition, then this generalized James-Stein estimator asymptotically dominates the unbiased estimator as the dimension of the space grows. These results hold for a large class of distributions on a variety of spaces - including Hilbert spaces - and therefore partially extend known results on the applicability of the James-Stein estimator to non-normal distributions on Euclidean spaces. Simulation studies on metric trees and symmetric-positive-definite matrices are presented, numerically  demonstrating the efficacy of this generalized James-Stein estimator.  

\smallskip
\noindent {\bf Keywords:} admissibility, empirical Bayes, Hadamard space,  
hierarchical model, 
nonparametric, 
shrinkage. 
\end{abstract}

\section{Introduction}
In his seminal 1948 article, Fr\'echet 
generalized the notion of the mean of a real-valued random variable to a metric space-valued random object \cite{Frechet}. Like the usual mean, the Fr\'echet mean provides a summary of the location of a distribution, from which a notion of Fr\'echet variance may also be defined. Fr\'echet means and variances have been used for statistical analysis of data from non-standard sample spaces, such as spaces of phylogenetic trees, symmetric positive-definite matrices in diffusion tensor imaging, and functional data analysis on Wasserstein spaces, to name a few \cite{Holmes,Pennec,MullerWasserstein}.
In terms of methodological development, \cite{MullerRegression, MullerANOVA} use Fr\'echet means to develop extensions of linear regression and ANOVA that are applicable for metric space-valued data. 
Additionally, substantial effort has gone into studying the convergence properties of sample Fr\'echet means and variances \cite{BhatandPat1, Ginestet, Ziezold}.

This article primarily considers simultaneous estimation of multiple Fr\'echet means, 
and conditions under which a generalized James-Stein shrinkage estimator dominates 
the natural estimator, the unbiased estimator of the Frech\'et mean. Specifically, let $X_1,\ldots, X_n$ be independent random objects taking values in a metric space, with Fr\'echet means 
$\theta_1,\ldots, \theta_n$ respectively, so that $X=(X_1,\ldots, X_n)$ is an 
unbiased estimator of $\theta= (\theta_1,\ldots, \theta_n)$. As shown by 
\cite{Stein}, if $X \sim N_n(\theta, \sigma^2 I)$ with $\sigma^2$ known and $n\geq3$, 
$X$ is dominated by the James-Stein shrinkage estimator $\delta_{JS}(X)$, given by 
\begin{align}
\label{JSEuclidean}
    \delta_{JS}(X) = \left( \frac{\sigma^2(n-2)}{\Vert X  -\psi \Vert^2}\right) \psi +  \left(1 - \frac{\sigma^2(n-2)}{\Vert X- \psi \Vert^2}\right)X,
\end{align}
where $\psi$ is a known shrinkage point. Intuitively, this estimator is obtained by starting from $X$ and ``shrinking'' towards $\psi$ by an amount that is adaptively estimated from the data $X$. The fact that $\delta_{JS}$ dominates $X$ is often interpreted as an indication of how sharing 
information across seemingly unrelated populations can lead to an improved estimator of $\theta_1,\ldots, \theta_n$ with respect to squared error loss summed across all populations. Indeed, the James-Stein estimator may be derived as an empirical Bayes estimator in which $\Vert X- \psi \Vert^2$ provides information about the likely magnitude of $\Vert \theta- \psi \Vert^2$ \cite{Efron}. 

In this article we study a generalization of $\delta_{JS}$ that is applicable for sample spaces that are uniquely geodesic metric spaces, which are metric spaces where there is a unique path of minimum length, or geodesic, between any two points. The estimator of 
 $\theta_1,\ldots, \theta_n$ we consider is a generalization of $\delta_{JS}(X)$ in the sense that
 the resulting estimator of $\theta$ is obtained by traveling from $X$ to the shrinkage point $\psi$ along a geodesic 
 by an amount that is adaptively estimated from $X$. If the geodesics in the metric space have tractable, known forms, then this estimator is simple to compute in practice. As we show, the possibility of 
 a Stein effect, that is, domination of $X$ by this shrinkage estimator, will partly depend on the curvature of the sample 
 space: The Stein effect is generally absent in spaces with positive curvature, and generally present 
 in flat spaces or spaces with negative curvature. These latter two spaces are known as Hadamard spaces \cite{Sturm}, and encompass a wide variety of metric spaces such as the aforementioned spaces of trees, symmetric positive-definite matrices and Wasserstein space on $\mathbb{R}$. Our results show that under some mild conditions, the proposed geodesic James-Stein estimator asymptotically dominates the unbiased estimator. Of note is that the domination results obtained are non-parametric; only moment bounds are placed on the family of distributions under consideration. As a consequence, the geodesic James-Stein estimator is robust, having reasonable performance across a wide range of distributions. Notably, since any Hilbert space is a Hadamard space, all of the results we develop also apply to Euclidean sample spaces.  Previous work generalizing the Stein estimator in Euclidean space  primarily has involved extending domination results to non-normal distributions \cite{Kubokawa}. Typically such distributions are assumed to have some sort of spherical symmetry or exponential family structure which allows for variants of Stein's Lemma to be applied \cite{BrandweinSpherical,Hudson}.  A related focus of research on Stein estimators has been finding estimators that dominate the positive part James-Stein estimator, which is known to be inadmissible \cite{BrownDiffusion, ShaoStrawderman}.

An outline of the remainder of this article is as follows: In Section \ref{Sec2} the concepts of Fr\'echet means, variances and Hadamard spaces are reviewed. Section \ref{Sec3} applies these concepts to the problem of estimating a Fr\'echet mean, and considers randomized, unbiased and minimax estimators. Section \ref{Sec4} provides the core theoretical results of the article, where the geodesic James-Stein estimator is introduced and its risk function for the multi-group estimation problem is investigated. A natural extension of this problem is to place a prior distribution on the Fr\'echet means of each group. This is done in Section \ref{Sec5} where we introduce the possibility of adaptively estimating a shrinkage point. Asymptotic optimality properties of the geodesic James-Stein estimator and the relationship to empirical Bayes estimators are also discussed in this section. Lastly, we demonstrate numerically how the geodesic James-Stein estimator exhibits favorable performance relative to $X$ in simulation studies on the space of symmetric positive-definite matrices and metric tree space.  

\section{Preliminaries}
\label{Sec2}
\subsection{Metric Space Valued Random Objects} 
Let $(\mathcal{X},d)$ be a metric measure space equipped with the Borel $\sigma$-algebra $\mathcal{B}$, induced from the metric topology on $\mathcal{X}$. A metric space valued random object $X$ is a $\mathcal{B}$-measurable function from a probability space $(\mathcal{Y},\mathcal{C},Q)$ into $\mathcal{X}$. The probability distribution $P$ of $X$ on $(\mathcal{X},\mathcal{B})$ is defined as the standard pushforward measure, $P(A) \coloneqq Q(X \in B) = Q\big(X^{-1}(B)\big), \forall B \in \mathcal{B}$.   

Statistical inference for a distribution $P$ is often focused on the estimation of a location of the distribution, and measures of variability about this location. In Euclidean space $\mathbb{R}^n$, the mean of a random variable provides one of the most basic notions of average location or central tendency. In $\mathbb{R}^n$, the integral $\int X dP$ that defines the mean of $X$ depends heavily on the vector space structure of $\mathbb{R}^n$. For example, if $X = \sum_{i = 1}^k x_i I_{A_i}$ is a simple function then $\int X dP = \sum_{i = 1}^k P(A_i)x_i$. This later sum only makes sense because scalar multiplication by the $P(A_i)$'s and vector addition is defined in $\mathbb{R}^n$. When dealing with metric space valued random objects it is no longer possible to define such integrals in general, so a different formulation of measure of central tendency is needed. 

Fr\'echet \cite{Frechet} proposed a generalization of a Euclidean mean that applies to arbitrary metric spaces. The idea is that a mean of $X$ should be the collection of points in $\mathcal{X}$ that are on average the closest to $X$. For $c \geq 1$, the $c$-Fr\'echet mean of $X$, $E_cX$, is defined in terms of the following variational problem:
\begin{align}
    E_cX \coloneqq \underset{x \in \mathcal{X}}{\argm} \; E\big(d(x,X)^c\big).
    \label{frecmndef}
\end{align}
When $\mathcal{X} = \mathbb{R}^n$ with the Euclidean metric, $E_2X$ coincides with the usual Euclidean mean while $E_1X$ is the set of medians of $X$. The existence and uniqueness of the solutions to \eqref{frecmndef} is not guaranteed, so that $E_cX$ is set-valued in general and can even be the empty set. This behaviour is not unfamiliar, as Euclidean medians are not always unique. A simple example of the non-existence of a $2$-Fr\'echet mean is when $X \sim N(0,1)$ on the space $\mathbb{R}/\{0\}$. 

If $E_cX$ is to be meaningful we require that $E\big(d(x,X)^c\big) < \infty$ for at least one $x \in \mathcal{X}$. By the triangle inequality, $d(x,X) \leq d(x,x_0) + d(x_0,X)$, which implies that $E\big(d(x,X)^c\big) < \infty$ for all $x \in \mathcal{X}$. We say that $X \in \mathcal{L}^c(\mathcal{X})$ if $E\big(d(x,X)^c\big) < \infty$ for all $x \in \mathcal{X}$. It should be remarked that this is slightly different than the situation in Euclidean space since a Euclidean mean $E(X)$ exists and is finite as long as $E(\vert X \vert ) < \infty$ or equivalently $E(\vert X - x \vert) < \infty, \forall x \in \mathbb{R}^n$. There is a more general definition of a Fr\'echet mean that accounts for this minor discrepancy, although we do not have any need for this extra generality \cite{Sturm}.

Having defined a mean, it is useful to have a measure describing the spread of $X$ about this mean. The $c$-Fr\'echet variance captures the average $c$-distance of $X$ from its corresponding $c$-Fr\'echet mean. The $c$-Fr\'echet variance of $X$, $V_cX$, is defined as  
\begin{align}
    V_cX \coloneqq \underset{x \in \mathcal{X}}{\inf} E\big(d(x,X)^c\big).
\end{align}
This quantity is always a non-negative real number for $X \in \mathcal{L}^c(\mathcal{X})$. If $X \in \mathbb{R}^n$ with covariance matrix $\Sigma$ then the $2$-Fr\'echet variance of $X$ is $\text{tr}(\Sigma)$, which is the sum of the variances of each component of $X$. As seen from this example, Fr\'echet variances do not capture any information about how the spread of $X$ varies in different  ``directions" in the metric space. Fr\'echet variances only summarize the average squared distance of a random object from its Fr\'echet mean set.

Throughout the remainder of this article we will be primarily concerned with $E_2X$ and $V_2X$ which we shall refer to as the Fr\'echet mean and variance of $X$. If $X$ has distribution $P$ then the notation $E_cP \coloneqq E_cX$ and $V_cP \coloneqq V_cX$ will be used.

\subsection{Hadamard Spaces}
\label{Sec2.2}
A geodesic curve in a metric space $(\mathcal{X},d)$ is a generalization of a straight line segment in $\mathbb{R}^n$. The curve $\gamma:[a,b] \rightarrow \mathcal{X}$, where $-\infty < a < b < \infty$, is a speed $v$ geodesic if $d\big(\gamma(t_1),\gamma(t_0)\big) = v\vert t_1 - t_0 \vert$ for all  $a \leq t_1,t_0 \leq b$. This definition amounts to requiring that the points on the curve $\gamma$ look exactly the same as the points on a corresponding interval in $\mathbb{R}$ with respect to the metric. Thus the map $f:vI \rightarrow \gamma(I)$ defined by $f(s) = \gamma(s/v)$ where $vI = \{vt: t \in I\}$ is an isometry. The length of a curve $\sigma:[a,b] \rightarrow \mathcal{X}$ is defined by $\ell(\sigma) = \sup_{a = x_0 \leq \cdots \leq x_k = b}\sum_{i = 1}^kd\big(\sigma(x_{i}),\sigma(x_{i-1})\big)$ where the supremum is over any finite partition $(x_0,\ldots,x_k)$ of the interval $[a,b]$. The triangle inequality shows that $\sum_{i= 1}^k d\big(\sigma(x_i),\sigma(x_{i-1})\big) \geq d\big(\sigma(b),\sigma(a)\big)$ for any such partition so that $\ell(\sigma) \geq d(\sigma(a),\sigma(b))$. If $\gamma:[c,d] \rightarrow \mathcal{X}$ is a geodesic then $\ell(\gamma) = d\big(\gamma(c),\gamma(d)\big)
$ which shows that for any other curve $\sigma:[a,b] \rightarrow \mathcal{X}$ with $\sigma(a) = \gamma(c)$ and $\sigma(b) = \gamma(d)$ the length of $\gamma$ is no larger than the length of $\sigma$, $\ell(\sigma) \geq \ell(\gamma)$.

A metric space $(\mathcal{X},d)$, is defined to be a geodesic space if for all $x_1,x_0 \in \mathcal{X}$ there exists a geodesic $\gamma:[a,b] \rightarrow \mathcal{X}$ with endpoints, $\gamma(a) = x_0,\gamma(b)  = x_1$. The metric space $\mathcal{X}$ is uniquely geodesic if it is geodesic and any two geodesics $\gamma,\sigma:[a,b] \rightarrow \mathcal{X}$, with $\gamma(a) = \sigma(a),\gamma(b) = \sigma(b)$ are equal \cite{Bridson}. In a uniquely geodesic space where $\gamma:[0,1] \rightarrow \mathcal{X}$ is a geodesic with $\gamma(0) = x$ and $\gamma(1) = y$, the notation $[x,y]_t$ for $t \in [0,1]$ will be used to represent the point $\gamma(t)$. The interpretation of $[x,y]_t$ is that this is the point obtained when travelling $t$ percent of the way along the geodesic that connects $x$ to $y$. Similarly, the expression $[x,y]$  represents the image in $\mathcal{X}$ of the geodesic between $x$ and $y$. 

In a normed vector space $(V,\Vert \cdot \Vert)$, line segments are geodesic in the sense defined above. To see this, if $\gamma:[a,b] \rightarrow V$ is the line segment $\gamma(t) = v_1t + v_0$, then $\Vert \gamma(t_1) - \gamma(t_0) \Vert = \Vert v_1 \Vert \vert t_1 - t_0 \vert$, implying that $\gamma$ is a speed $\Vert v_1 \Vert$ geodesic. Any normed vector space is thus geodesic but may not be uniquely geodesic. In the case where $V$ is an inner product space, $V$ is uniquely geodesic. On a sphere, geodesics are the minor arcs of great circles, which are the shortest paths that connect points on a sphere. The sphere is geodesic but not uniquely geodesic because any two antipodal points can be joined by infinitely many geodesics. It is worth noting that in a Riemannian manifold geodesics are more commonly defined as critical points of the Riemannian length functional. The definition of a geodesic presented here requires that a geodesic be a minimizer of the length functional and so it is more restrictive than the usual definition if $\mathcal{X}$ is a Riemannian manifold.

The curvature of a uniquely geodesic metric space is primarily described in terms of the geometric properties of generalized triangles in the space. Given three points $x,y,z \in \mathcal{X}$ the triangle $\Delta xyz \subset \mathcal{X}$ is defined as the set of points $[x,y] \cup [y,z] \cup [z,x]$. Due to the triangle inequality, given the numbers $d(x,y),d(y,z)d(z,x)$, there exist points $\Tilde{x},\Tilde{y},\Tilde{z}$ in $\mathbb{R}^2$ such that the triangle $\Delta\Tilde{x}\Tilde{y}\Tilde{z} \subset \mathbb{R}^2$ has side lengths $d(x,y),d(y,z)$ and $d(z,x)$. The Alexandrov curvature \cite{Alexandrov} of a metric space compares how the distance from  $[x,y]_t$ to $z$ in $\mathcal{X}$ differs from the distance from $(1-t) \Tilde{x} + t \Tilde{y}$ to $\Tilde{z}$ in $\mathbb{R}^2$ for $t \in [0,1]$. A metric space has negative Alexandrov curvature if $d([x,y]_t,z)$ is no greater than $d([\Tilde{x},\Tilde{y}]_t,\Tilde{z})$ for all $x,y,z \in \mathcal{X}$ while being less than $d([\Tilde{x},\Tilde{y}]_t,\Tilde{z})$ for at least some triplet of points $x,y,z \in \mathcal{X}$ \cite{Bridson}. Positive Alexandrov curvature is defined similarly, while a space with zero Alexandrov curvature has $d([x,y]_t,z) = d([\Tilde{x},\Tilde{y}]_t,\Tilde{z})$ for all $x,y,z \in \mathcal{X}$. These requirements can be visualized as positively curved spaces having triangles with edges that bend outwards and negatively curved spaces having triangles with edges that bend inwards, relative to triangles in $\mathbb{R}^2$. See Figure \ref{comptriangle} for typical examples of generalized triangles in positively and negatively curved spaces. The generalized triangles in Figure \ref{comptriangle} are isometrically embedded in $\mathbb{R}^2$ so that all distances between points are given by Euclidean distance.

A metric space with non-positive curvature satisfies the CAT(0) curvature bound $d([x,y]_t,z) \leq d([\Tilde{x},\Tilde{y}]_t,\Tilde{z})$ for all $x,y,z \in \mathcal{X}$. After expanding $d([\Tilde{x},\Tilde{y}]_t,\Tilde{z})$  in terms of the side lengths of the triangle $\Delta \Tilde{x}\Tilde{y}\Tilde{z}$, the CAT(0) bound is equivalent to
\begin{IEEEeqnarray}{c}
            d([x,y]_t,z)^2 \leq (1-t)d(x,z)^2 + td(y,z)^2 - t(1-t)d(x,y)^2
    \label{cat0ineq}
    \IEEEeqnarraynumspace
    \IEEEeqnarraynumspace
\end{IEEEeqnarray}
for all $x,y,z \in \mathcal{X}$ and $t \in [0,1]$. Hadamard spaces are defined to be \textit{complete}, \textit{uniquely geodesic}, metric spaces that satisfy the \textit{non-positive} or CAT(0) \textit{curvature bound} in \eqref{cat0ineq}.

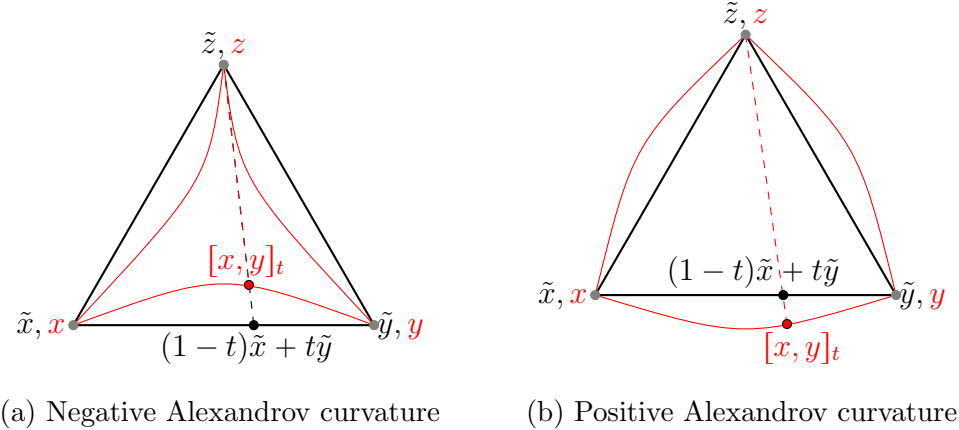
\begin{figure}[h]
    \centering
    \begin{subfigure}[t]{.45\textwidth}
    \centering
    \begin{tikzpicture}[scale = 1]
\draw[thick] (0,0) -- (4,0);
\draw[thick] (0,0) -- (2,{sqrt(12)});
\draw[thick] (2,{sqrt(12)}) -- (4,0);
\draw[red] (0,0) ..controls (2,{sqrt(12)/2 - 1}) .. (4,0);
\draw[red] (0,0) .. controls ({2 - 0.2},{sqrt(12)/2 + 0.2}) .. (2,{sqrt(12)});
\draw[red] (2,{sqrt(12)}) .. controls ({2 + 0.2},{sqrt(12)/2 + 0.2}) .. (4,0);
\draw (-0.43,0) node{$\Tilde{x},\textcolor{red}{x}$};
\draw (4.35,0) node{$\Tilde{y},\textcolor{red}{y}$};
\draw (2,{sqrt(12) + .25}) node{$\Tilde{z},\textcolor{red}{z}$};
\draw[black,dashed] (2,{sqrt(12)}) -- (2.4,0);
\draw[red,dashed] (2,{sqrt(12)}) -- (2.335,.535);
\draw (2.3,-0.3) node{$(1-t)\Tilde{x} + t\Tilde{y}$};
\draw (2.3,.85) node{$\textcolor{red}{[x,y]_t}$};
\draw[fill = black] (2.4,0) circle (.6mm);
\draw[fill = red] (2.335,.535) circle (.6mm);
\draw[fill,gray] (0,0) circle (.6mm);
\draw[fill,gray] (2,{sqrt(12)}) circle (.6mm);
\draw[fill,gray] (4,0) circle (.6mm);
\end{tikzpicture}
\caption{Negative Alexandrov curvature}
\end{subfigure}
\begin{subfigure}[t]{.45\textwidth}
\centering        \begin{tikzpicture}[scale = 1]
\draw[thick] (0,0) -- (4,0);
\draw[thick] (0,0) -- (2,{sqrt(12)});
\draw[thick] (2,{sqrt(12)}) -- (4,0);
\draw[red] (0,0) ..controls (2,{-.6}) .. (4,0);
\draw[red] (0,0) .. controls ({1 - 0.5},{sqrt(12)/2 + 0.3}) .. (2,{sqrt(12)});
\draw[red] (2,{sqrt(12)}) .. controls ({3 + 0.5},{sqrt(12)/2 + 0.3}) .. (4,0);
\draw (-0.43,0) node{$\Tilde{x},\textcolor{red}{x}$};
\draw (4.35,0) node{$\Tilde{y},\textcolor{red}{y}$};
\draw (2,{sqrt(12) + .25}) node{$\Tilde{z},\textcolor{red}{z}$};
\draw[red,dashed] (2,{sqrt(12)}) -- (2.55,-0.385);
\draw (2.1,0.3) node{$(1-t)\Tilde{x} + t\Tilde{y}$};
\draw (2.75,-.7) node{$\textcolor{red}{[x,y]_t}$};
\draw[fill = black] (2.5,0) circle (.6mm);
\draw[fill = red] (2.55,-.385) circle (.6mm);
\draw[fill,gray] (0,0) circle (.6mm);
\draw[fill,gray] (2,{sqrt(12)}) circle (.6mm);
\draw[fill,gray] (4,0) circle (.6mm);
\end{tikzpicture}
\caption{Positive Alexandrov curvature}
\end{subfigure}
\caption{Metric space comparison triangles, \textcolor{red}{$\Delta xyz$} and $\Delta \Tilde{x}\Tilde{y}\Tilde{z}$}
\label{comptriangle}
\end{figure}

The subset of Hadamard spaces that have zero Alexandrov curvature so that \eqref{cat0ineq} holds with equality are geometrically similar to $\mathbb{R}^n$. In this case the triangle $\Delta xyz$ is indistinguishable from its comparison triangle $\Delta \Tilde{x}\Tilde{y}\Tilde{z}$ and thus Euclidean trigonometry will apply to $\Delta xyz$. For example, a version of the Euclidean law of sines or cosines will hold in such a space, and suitably defined interior angles of $\Delta xyz$ will also sum to $\pi$. Any Hilbert space or closed, convex subset thereof is a zero curvature Hadamard space. Consequently, any results that hold for Hadamard spaces will also hold for Hilbert spaces, which is the setting of much of classical statistics.  

The definition of Alexandrov curvature is motivated in part as a generalization of the sectional curvature of a Riemannian manifold. As such, any complete Riemannian manifold with non-positive sectional curvature is a Hadamard space. For example, the saddle surface in $\mathbb{R}^3$ has negative sectional curvature and is a Hadamard space with non-zero curvature. If one draws a triangle of shortest paths on such a surface it will look like the comparison triangle in Figure  \ref{comptriangle}. Another easily visualized example of a Hadamard space with non-zero curvature is a metric tree. Metric trees are weighted graphs that are trees endowed with the shortest path metric. Section \ref{Sec5} goes into further detail about metric tree spaces.  

In a Hilbert space, any closed and convex set $\mathcal{C}$ has the property that there exists a unique projection of any point $x$ onto $\mathcal{C}$ that minimizes the squared distance of $x$ from $\mathcal{C}$. If $\mathcal{C}$ is a closed linear subspace then this follows from the Pythagorean theorem. This result can be generalized to Hadamard spaces as follows: A set $\mathcal{C}$ in a geodesic space is said to  be convex if for all $x,y \in \mathcal{C}$ we have that $[x,y] \subset \mathcal{C}$. The Hadamard space projection theorem of \cite{Bacak} says that for any point $x \in \mathcal{X}$ and closed and convex subset $\mathcal{C}$ of a Hadamard space there exists a unique point $\Pi(x) \in \mathcal{C}$ that satisfies $d\big(x,\Pi(x)\big)^2 = \inf_{y \in \mathcal{C}}d(x,y)^2$. In addition, $\Pi(x)$ satisfies the inequality
\begin{align}
    d(x,z)^2 \geq d\big(z,\Pi(x)\big)^2 + d\big(\Pi(x),x\big)^2, \;\; \forall z \in \mathcal{C}.
     \label{metricproj}
\end{align}
The inequality in \eqref{metricproj} provides a bound on how close $\Pi(x)$ is to $x$ relative to any other point $z \in \mathcal{C}$. When $\mathcal{C}$ is a closed vector subspace of a Hilbert space, \eqref{metricproj} holds with equality and is the Pythagorean theorem. 

We will now show that a Hadamard space $L^2(\mathcal{X})$ of random objects on $\mathcal{X}$ can be constructed in a manner that is analogous to the construction of $L^2(\mathbb{R}^n)$ from $\mathbb{R}^n$. In $L^2(\mathcal{X})$ the inequality \eqref{metricproj} can be applied to obtain a bias-variance inequality. Let $X,Y$ be random objects in $\mathcal{L}^2(\mathcal{X})$. We define a pseudo-metric $\rho$, on $\mathcal{L}^2(\mathcal{X})$ by taking $\rho(X,Y) \coloneqq E\big(d(X,Y)^2\big)^{1/2}$. The space $L^2(\mathcal{X})$ is the set of equivalence classes of random objects in $\mathcal{L}^2(\mathcal{X})$ that are equal almost everywhere, so that $X \sim X'$ if and only if $E\big(d(X,X')^2\big) = 0$. The metric space $\big(L^2(\mathcal{X}\big),\rho)$ is a Hadamard space with geodesics given by $[X,Y]_t(\omega) = [X(\omega),Y(\omega)]_t$. The CAT(0) bound follows by the linearity of expectations while completeness follows in the same way that completeness of $L^2(\mathbb{R}^n)$ follows from the completeness of $\mathbb{R}^n$ \cite{Bacak}. 

The collection of constant almost everywhere random objects $\mathcal{C} \coloneqq \{\theta \in \mathcal{X}\} \subset L^2(\mathcal{X})$ is a closed and convex set in $L^2(\mathcal{X})$. Noticing that ${E_2X = \inf_{\theta \in \mathcal{C}}\rho(X,\theta)^2}$, the projection theorem implies that the Fr\'echet mean $E_2X = \Pi(X)$ exists and is unique \cite{Sturm}. The inequality in \eqref{metricproj} becomes
\begin{align}
    E\big(d(X,\theta)^2\big) \geq d(\theta,E_2X)^2 + E\big(d(E_2X,X)^2\big), \;\; \forall \theta \in \mathcal{X},
    \label{hadbiasvar}
\end{align}
which can be viewed as a bias-variance inequality as follows: If $X$ is used as an estimator for $\theta$ under the loss function $L(\theta,\cdot) = d(\theta,\cdot)^2$ then the term $E\big(d(E_2X,X)^2\big)$ is exactly the Fr\'echet variance $V_2X$ while $d(\theta,E_2X)^2$ can be viewed as the squared bias of $X$. 

Conditional expectations of random objects in a Hadamard space can be defined in a similar manner. Recall that for a $\sigma$-algebra $\mathcal{G} \subset \mathcal{B}$ the conditional expectation of $X \in L^2(\mathbb{R}^n)$ is the projection of $X$ onto the closed vector subspace of $\mathcal{G}$-measurable random variables in $L^2(\mathbb{R}^n)$. Likewise, taking $\mathcal{C} \coloneqq \{Y \in L^2(\mathcal{X}):\sigma(Y) \subset \mathcal{G}\}$ to be the $\mathcal{G}$-measurable random objects in $L^2(\mathcal{X})$, the conditional expectation $E_2(X|\mathcal{G})$, as defined in \cite{Bacak}, is given by 
\begin{align}
 E_2(X|\mathcal{G}) \coloneqq \underset{Y \in \; \mathcal{C}}{\argm} \; E\big(d(X,Y)^2\big). 
 \label{condexpdefinition}
\end{align}
As the set $\mathcal{C}$ is closed and convex $E_2(X|\mathcal{G})$ exists, is unique, and satisfies a version of \eqref{metricproj}. As we will see in the next section, the lack of a vector space structure in $\mathcal{C}$ implies that not all of the familiar properties of Euclidean conditional expectations carry over to Hadamard spaces. 

\section{Estimation of Fr\'echet Means}
\label{Sec3}
We start by considering a general Hadamard space point estimation problem. Let $\mathcal{P} \subset L^2(\mathcal{X})$ be a family of distributions on $\mathcal{X}$ and $g: \mathcal{P} \rightarrow \mathcal{X}$ be a functional defined on $\mathcal{P}$ that is an estimand of interest. For example, $g$ could be the Fr\'echet mean functional $g(P) = E_2P$. Given a single observation of $X \sim P \in \mathcal{P}$, we seek to estimate $g(P)$ under squared distance loss $L\big(g(P),\delta(X)\big) \coloneqq d\big(g(P),\delta(X)\big)^2$ with the corresponding risk function $R(P,\delta) \coloneqq E\big[L\big(g(P),\delta\big)\big]$. 

A function $f: \mathcal{X} \rightarrow \mathbb{R}$ is said to be metrically convex if $f([x,y]_t)$ is convex as a function of $t \in [0,1]$ for any choice of $x,y \in \mathcal{X}$ \cite{Bacak}. The CAT(0) inequality \eqref{cat0ineq} shows that the function $f_z(x) = d(z,x)^2$ is metrically convex for all $z \in \mathcal{X}$. The loss function $d\big(g(P),\delta\big)^2$ is thus metrically convex. This convexity yields behaviour similar to that of convex functions defined on $\mathbb{R}^n$. For instance, a Fr\'echet mean version of Jensen's inequality, $E(d(X,\theta)^2) \geq d(E_2X,\theta)^2$, is an immediate result of \eqref{hadbiasvar}. The metric convexity of the squared distance function is also the key property that allows for the favorable use of the shrinkage estimators considered in the next section.      

First we show that the class of non-randomized estimators of any functional under squared distance loss forms an essentially complete class, an attribute that holds for all convex loss functions on $\mathbb{R}^n$. That is, for any randomized estimator $\delta(X,U)$ of $g(P)$ with $U \sim \text{Unif}[0,1]$ independently of $X$, there exists a non-randomized estimator $\Tilde{\delta}(X)$ that satisfies $R(P,\tilde{\delta}) \leq R(P,\delta)$ for all $P \in \mathcal{P}$. Given a randomized estimator $\delta(X,U)$, take $\tilde{\delta}(X) = E_2\big(\delta(X,U)|\sigma(X)\big)$ as defined in \eqref{condexpdefinition}. Applying the inequality in \eqref{metricproj} yields 
\begin{align}
    R(P,\delta) =  E\big[d\big(\delta,g(P)\big)^2\big] \geq E\big(d(\delta,\tilde{\delta})^2\big) + E\big[d\big(\tilde{\delta},g(P)\big)^2\big] \geq R(P,\tilde{\delta}),
\end{align}
which proves the result. Note that in order for $E_2\big(\delta(X,U)|\sigma(X)\big)$ to be a function of $X$ the metric space $\mathcal{X}$ must be separable \cite{Dudley}.

A version of the Rao-Blackwell theorem can be extended to this setting by similar reasoning. Suppose that a $\sigma$-algebra $\mathcal{G}$ has the property that $E_2\big(\delta(X)|\mathcal{G}\big) = E_2\big(\delta(Y)|\mathcal{G}\big)$ when $X \sim P$ and $Y \sim Q$ for all $ P,Q \in \mathcal{P}$, so that the random object $\tilde{\delta} =  E_2\big(\delta(X)|\mathcal{G}\big)$ is independent of $P \in \mathcal{P}$. Further suppose that a version of $E_2(\delta|\mathcal{G})(\omega)$ can be realized as a function of $X(\omega)$, so that $\tilde{\delta}(X(\omega)) \coloneqq E_2\big(\delta(X)|\mathcal{G}\big)(\omega)$ is an estimator. This second assumption holds in the typical scenario where $\mathcal{X}$ is separable and $\mathcal{G} = \sigma\big(f(X)\big)$ for some measurable function $f$ of $X$. This conditioning on $\mathcal{G}$ reduces the risk of $\delta$ for some $P \in \mathcal{P}$ unless $\tilde{\delta} = \delta, \; a.e \; \mathcal{P}$. It should be noted that the standard definition of sufficiency of $\mathcal{G}$, requiring that $P(A|\mathcal{G}) = Q(A|\mathcal{G})$ for all $ P,Q \in \mathcal{P}$, does not immediately imply that $\tilde{\delta} = E_2\big(\delta|\mathcal{G}\big)$ is independent of the choice of $P$. The reason is that in the case of a Euclidean valued $\delta(X)$, conditional expectations can be approximated by conditional probabilities using the dominated convergence theorem for conditional expectations. The relationship between conditional expectations and conditional probabilities is more complex in the variational formulation of the metric conditional expectation in \eqref{condexpdefinition}. 

From the Rao-Blackwell theorem the Lehmann-Scheff\'e theorem is easily obtained in a Euclidean setting by taking the conditional expectation of an unbiased estimator with respect to a complete sufficient statistic. A metric space point estimator $\delta(X)$ of $g(P)$ is said to be unbiased for the family $\mathcal{P}$ if $E_2\big(\delta(X)\big) = g(P)$ when $X \sim P$ for all $P \in \mathcal{P}$. The main obstacle towards extending Lehmann-Scheff\'e to a metric space case is that the tower rule does not hold in general for conditional Fr\'echet means: If $\mathcal{G} \subset \mathcal{H}$ then it will not always be the case that $E_2\big(E_2\big(\delta|\mathcal{H})|\mathcal{G}\big) = E_2(\delta|\mathcal{G})$ \cite{SturmMartingale}. The reason for this is that $\mathcal{L}^2(\mathcal{G})$ and $\mathcal{L}^2(\mathcal{H})$ do not inherit any Hilbert space structure from $\mathcal{X}$ as they do in the Euclidean case.  The Pythagorean theorem applied to nested vector subspaces cannot in general be applied to $\mathcal{L}^2(\mathcal{H}) \subset \mathcal{L}^2(\mathcal{G})$. It follows that if $\delta$ is unbiased for $g(P)$ then there is no guarantee that $E_2(\delta|\mathcal{G})$ will remain unbiased for $g(P)$. See Appendix \ref{appB} for an explicit example of this phenomenon.    

The problem we will consider for the remainder of this work is the estimation of a Fr\'echet mean, $g(P) = E_2P$, under squared distance loss. Due to the generality of Hadamard spaces we will work with non-parametric families of distributions that only make mild assumptions on the Fr\'echet means and variances of random objects. Parametric alternatives do exist, most notably the Riemannian normal distributions on a Riemannian manifold introduced by Pennec \cite{PennecGauss}. The Riemannian normal distribution can however be challenging to work with as its Fr\'echet variance is in general related in a complex, non-linear way to the scale parameter of the distribution and may even depend on the Fr\'echet mean.  

When working with a large non-parametric family of distributions there may not be many estimators that are unbiased for the entire family. This next result shows that in an unbounded Hadamard space the only unbiased estimator for the family of distributions with a fixed Fr\'echet variance is  $\delta(X) = 
X$. 
\begin{theorem}
\label{umvuthm}
If $\mathcal{X}$ is a Hadamard space with infinite diameter then the unique unbiased estimator of $E_2P$ for the family $\mathcal{P} = \{P: V_2P = \sigma^2\}$ is $\delta(X) = X$. 
\end{theorem}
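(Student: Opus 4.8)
The plan is to verify that $\delta(X)=X$ is unbiased (which is immediate, since $E_2(X)=E_2P$ by the very definition of the Fr\'echet mean), and then to establish uniqueness by showing that \emph{any} unbiased $\delta$ must satisfy $\delta(x)=x$ at every $x\in\mathcal{X}$. The mechanism will be to test $\delta$ against two-atom distributions whose atoms are pushed arbitrarily far apart; this is precisely the point at which the infinite-diameter hypothesis is used, since a bounded space would have finite diameter and could not supply such atoms.

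First I would record the Fr\'echet mean and variance of the distribution $P_{a,b,p}$ that places mass $p$ at $a$ and mass $1-p$ at $b$. Projecting an arbitrary $x$ onto the closed, convex geodesic segment $[a,b]$ and applying the projection inequality \eqref{metricproj} with $z=a$ and with $z=b$ shows that the Fr\'echet mean must lie on $[a,b]$; minimizing $p\,t^2L^2+(1-p)(1-t)^2L^2$ over $t\in[0,1]$ along the geodesic, where $L=d(a,b)$, then yields $E_2P_{a,b,p}=[a,b]_{1-p}$ and $V_2P_{a,b,p}=p(1-p)L^2$. Hence $P_{a,b,p}\in\mathcal{P}$ exactly when $p(1-p)L^2=\sigma^2$, which is solvable once $L\ge 2\sigma$, with two roots $p_-<p_+$ satisfying $p_-+p_+=1$.

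Next, fixing a target point $a$ and writing $a'=\delta(a)$, $b'=\delta(b)$, I would note that $X\sim P_{a,b,p}$ pushes forward to $\delta(X)\sim P_{a',b',p}$, so unbiasedness becomes $[a',b']_{1-p}=[a,b]_{1-p}$. Evaluating this identity at both admissible weights gives $[a',b']_{p_-}=[a,b]_{p_-}$ and $[a',b']_{p_+}=[a,b]_{p_+}$; writing $Q_-=[a,b]_{p_-}$ and $Q_+=[a,b]_{p_+}$, the geodesics $[a,b]$ and $[a',b']$ both pass through the distinct points $Q_-,Q_+$. Computing $d(Q_-,Q_+)$ along each geodesic gives $(p_+-p_-)L=(p_+-p_-)\,d(a',b')$, hence $d(a',b')=L$ and therefore $d(a',Q_-)=p_-d(a',b')=p_-L=d(a,Q_-)$. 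The triangle inequality then yields $d(a,a')\le d(a,Q_-)+d(Q_-,a')=2p_-L$, and since $p_-L=\frac{2\sigma^2}{L+\sqrt{L^2-4\sigma^2}}\le \frac{2\sigma^2}{L}$, we obtain $d\big(a,\delta(a)\big)\le 4\sigma^2/L$. Letting $L=d(a,b)\to\infty$ (possible for every $a$ by infinite diameter) forces $\delta(a)=a$, and since $a$ is arbitrary, $\delta$ is the identity.

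I expect the main obstacle to be the rigidity step that extracts $d(a',Q_-)=p_-L$ from unbiasedness: a single two-atom distribution only constrains the pair $\big(\delta(a),\delta(b)\big)$ jointly, so I use \emph{both} admissible weights to place two common points on $[a',b']$ and thereby recover $d(a',b')=L$. I would emphasize that this argument deliberately avoids any non-branching hypothesis—Hadamard spaces such as metric trees genuinely branch, so one cannot upgrade a shared sub-segment of $[a,b]$ and $[a',b']$ to the equality $[a,b]=[a',b']$—and that it is instead the quantitative bound $d\big(a,\delta(a)\big)\le 4\sigma^2/L$, made effective by spreading the atoms apart, that closes the proof.
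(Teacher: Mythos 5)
Your proof is correct, and it rests on the same basic device as the paper's --- testing an unbiased $\delta$ against two-atom distributions whose atoms are pushed a distance $L\to\infty$ apart, which is exactly where infinite diameter is used --- but the step that extracts $\delta(a)=a$ is genuinely different and, in fact, tighter. The paper fixes, for each separation $\sqrt{k}$, the single weight $q_k\geq 1/2$ solving $kq_k(1-q_k)=\sigma^2$, and concludes from $[x,y_k]_{1-q_k}=[\delta(x),\delta(y_k)]_{1-q_k}$ by letting $k\to\infty$ and asserting that the right-hand side converges to $\delta(x)$; that limit implicitly requires $(1-q_k)\,d\big(\delta(x),\delta(y_k)\big)\to 0$, i.e.\ some a priori control on how far $\delta$ may send $y_k$, which the paper does not spell out. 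Your two-weight device supplies precisely this control: using both admissible roots $p_\pm$ for a single pair $(a,b)$ places the two distinct points $Q_\pm$ on both geodesics $[a,b]$ and $[\delta(a),\delta(b)]$, forces $d\big(\delta(a),\delta(b)\big)=d(a,b)$, and then delivers the explicit bound $d\big(a,\delta(a)\big)\leq 4\sigma^2/L$ with no limiting argument inside the geodesic. So your route buys a quantitative rate and closes a gap that the paper's limit step leaves implicit, at the cost of slightly more bookkeeping with the two roots; your derivation of $E_2P_{a,b,p}=[a,b]_{1-p}$ and $V_2P_{a,b,p}=p(1-p)L^2$ via the projection inequality \eqref{metricproj} is also a correct way of making rigorous what the paper dismisses as a ``straightforward calculation.'' Your remark that the argument needs no non-branching hypothesis is well taken, since the two common points only pin down distances along the geodesics, not the geodesics themselves, and that is all the triangle-inequality step requires.
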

\begin{proof}
Suppose that $\delta(X)$ is an unbiased estimator for $\mathcal{P}$. For any $x,y \in \mathcal{X}$ let  $P_{xyq}, \; q \in [0,1],$ be the Bernoulli distribution on $\mathcal{X}$ with $P_{xyq}(\{x\}) = q$, $ P_{xyq}(\{y\}) = 1-q$. Fix $x$ and choose a sequence of $y_k$ such that $d(x,y_k) \geq \sqrt{k}$. Such a sequence exists as $\text{diam}(\mathcal{X}) = \infty$. Without loss of generality we can assume that $d(x,y_k) = \sqrt{k}$ since we have that $d(x,[x,y_k]_{\sqrt{k}/d(x,y_k)}) = \sqrt{k}$. A straightforward calculation shows that $E_2P_{xy_kq} = [x,y_k]_{1-q}$ and thus $V_2(P_{xy_kq}) = d(x,y_k)^2q(1-q) = kq(1-q)$. For $k \geq 4\sigma^2$ there exists a $1/2 \leq q_k \leq 1$ such that $V_2(P_{xy_kq_k}) = \sigma^2$. Thus $P_{xy_kq_k} \in \mathcal{P}$ for $k$ large enough, with $q_k \uparrow 1$ as $k \rightarrow \infty$. Now if $X \sim P_{xy_kq_k}$ then $\delta(X) \sim P_{\delta(x)\delta(y_k)q_k}$ so that  $E_2\delta(X) = [\delta(x),\delta(y_k)]_{1-q_k}$. As $\delta(X)$ is unbiased for $E_2X$ we have $[x,y_k]_{1-q_k} = [\delta(x),\delta(y_k)]_{1-q_k}, \forall k$. Taking limits of both sides of this equation gives $x = \lim_{k \rightarrow \infty}[x,y_k]_{1-q_k} = \lim_{k \rightarrow \infty}[\delta(x),\delta(y_k)]_{1-q_k} = \delta(x)$, proving that $\delta(x) = x$ for an arbitrary $x \in \mathcal{X}$.
\end{proof}
We remark that a uniformly minimum Fr\'echet variance unbiased estimator may not minimize the squared distance risk out of the collection of all unbiased estimators. This is due to the Fr\'echet variance and bias only providing a lower bound on the risk in \eqref{hadbiasvar}.  

A different technique for determining properties of estimators in metric spaces is to restrict distributions on $\mathcal{X}$ to subsets of $\mathcal{X}$ that are isometric to Euclidean space and then apply known results for Euclidean spaces. A geodesic line \cite{Bridson} is defined to be a function $\gamma:\mathbb{R} \rightarrow \mathcal{X}$ such that the restriction $\gamma \vert_{[a,b]}$ is a speed $v$ geodesic for any $a<b \in \mathbb{R}$. Geodesic lines look exactly like copies of $\mathbb{R}$ that are embedded in $\mathcal{X}$. 
Using the known result that $\delta(X) = X$ is a minimax estimator for the mean of a normal distribution \cite{Lehmann}, we get the following theorem.
\begin{theorem}
\label{minimaxthm}
If the Hadamard space $\mathcal{X}$ has the property that there exists a geodesic line in $\mathcal{X}$, then $X$ is a minimax estimator of $E_2X$ for the family $\mathcal{P} = \{P:V_2P = \sigma^2\}$. 
\end{theorem}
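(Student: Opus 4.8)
The plan is to reduce the Hadamard-space minimax problem to the classical one-dimensional normal mean problem by exploiting the isometric copy of $\mathbb{R}$ furnished by the geodesic line, and to supply a matching lower bound on the minimax risk through a well-chosen subfamily. First I would record that $X$ has constant risk over $\mathcal{P}$: since $R(P,X) = E\big(d(X,E_2P)^2\big) = V_2P = \sigma^2$ for every $P \in \mathcal{P}$, we have $\sup_{P \in \mathcal{P}} R(P,X) = \sigma^2$. It therefore suffices to show that the minimax risk $\inf_\delta \sup_{P \in \mathcal{P}} R(P,\delta)$ is at least $\sigma^2$, since then $X$ attains it.

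To produce this lower bound, let $\gamma:\mathbb{R} \rightarrow \mathcal{X}$ be a geodesic line and set $\mathcal{C} = \gamma(\mathbb{R})$. I would first check that $\mathcal{C}$ is a closed convex subset of $\mathcal{X}$ that is isometric to $\mathbb{R}$: convexity because the unique geodesic joining $\gamma(s)$ and $\gamma(t)$ is the restriction $\gamma\vert_{[s,t]}$, which lies in $\mathcal{C}$, and closedness because $\mathcal{C}$ is isometric to the complete space $\mathbb{R}$ and a complete subset of a metric space is closed. I would then introduce the subfamily $\mathcal{P}_0$ consisting of the distributions on $\mathcal{C}$ that correspond, under the isometry between $\mathcal{C}$ and $\mathbb{R}$, to the univariate normals $N(\mu,\sigma^2)$, $\mu \in \mathbb{R}$. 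Because the isometry preserves distances, each such distribution has Fr\'echet mean equal to the image of $\mu$ and Fr\'echet variance $\sigma^2$, so $\mathcal{P}_0 \subseteq \mathcal{P}$, and consequently $\inf_\delta \sup_{P \in \mathcal{P}} R(P,\delta) \geq \inf_\delta \sup_{P \in \mathcal{P}_0} R(P,\delta)$.

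The key step, and the one I expect to require the most care, is showing that the subfamily minimax risk equals the classical value $\sigma^2$; this demands ruling out the possibility that an estimator taking values off the line $\mathcal{C}$ does better. Here I would invoke the Hadamard space projection theorem. For $P \in \mathcal{P}_0$ the Fr\'echet mean $E_2 P$ lies in $\mathcal{C}$, so applying the projection inequality \eqref{metricproj} with $x = \delta(X)$ and $z = E_2P$ gives $d\big(\delta(X),E_2P\big)^2 \geq d\big(\Pi(\delta(X)),E_2P\big)^2$, where $\Pi$ is the metric projection onto $\mathcal{C}$. Taking expectations shows $R(P,\Pi\circ\delta) \leq R(P,\delta)$ for every $P \in \mathcal{P}_0$, so the infimum may without loss of generality be restricted to estimators valued in $\mathcal{C}$. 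For such estimators, composing with the inverse isometry converts the problem into estimating $\mu$ from data distributed as $N(\mu,\sigma^2)$ on $\mathbb{R}$ under squared-error loss, with the risks matching exactly because $d$ restricted to $\mathcal{C}$ is Euclidean. The classical result \cite{Lehmann} that $X$ is minimax for the normal mean then yields $\inf_\delta \sup_{P \in \mathcal{P}_0} R(P,\delta) = \sigma^2$.

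Combining the pieces gives $\inf_\delta \sup_{P \in \mathcal{P}} R(P,\delta) \geq \sigma^2 = \sup_{P \in \mathcal{P}} R(P,X)$, so $X$ attains the minimax risk and is minimax. The only delicate points are verifying that $\mathcal{C}$ is a closed convex set to which \eqref{metricproj} applies and confirming that projection onto $\mathcal{C}$ genuinely cannot increase the risk for every member of the subfamily; both follow directly from the projection theorem once the Fr\'echet means of the subfamily are known to lie on the line.
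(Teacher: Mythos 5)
Your proposal is correct and follows essentially the same route as the paper: restrict to the subfamily of normal distributions pushed onto the geodesic line, use the Hadamard projection theorem to show that both the Fr\'echet means and (without loss of generality) the estimators live on the line, and then invoke the classical minimaxity of $X$ for the univariate normal mean, matching the suprema of the risks over the subfamily and the full family. The only cosmetic difference is that you phrase the reduction as a minimax lower bound via $\mathcal{P}_0$ while the paper phrases it via admissibility of estimators supported on $\gamma(\mathbb{R})$; the substance is identical.
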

\begin{proof}
Let $\gamma:\mathbb{R} \rightarrow \mathcal{X}$ be a geodesic line parameterized to have unit speed. Consider the sub-family of distributions $\mathcal{P}^* \subset \mathcal{P}$ where $\mathcal{P}^* \coloneqq \{P_{\theta}: X \sim P_\theta, \; X = \gamma(Y), \; Y \sim N(\theta,\sigma^2), \; \theta \in \mathbb{R}\}.$ That is, the distribution of $P_{\theta}$ is concentrated on the geodesic line $\gamma$ and has a normally distributed coordinate on this geodesic. Take $\Pi:\mathcal{X} \rightarrow \gamma(\mathbb{R})$ to be the projection of points in $\mathcal{X}$ onto the closed and convex set that is the image of $\gamma$ in $\mathcal{X}$, as defined in \eqref{metricproj}. It follows by the projection theorem that for any point $z \in \mathcal{X}/\{\gamma(\mathbb{R})\}$ and $X \in \gamma(\mathbb{R})$ we have $d(X,z)^2 > d\big(X,\Pi(z)\big)^2$. The Fr\'echet mean of $X$ is therefore contained in the image $\gamma(\mathbb{R})$ and must equal $\gamma(\theta)$. Similarly, for any $\gamma(\theta)$ and estimator $\delta(X)$ of $\gamma(\theta)$, the projection theorem implies that ${E\big[d\big(\delta(X),\gamma(\theta)\big)^2\big] \geq E\big[d\big(\Pi(\delta(X)),\gamma(\theta)\big)^2\big]}$ with equality holding if and only if $\delta(X) \in \gamma(\mathbb{R})$ almost surely. This shows that if $\delta(X)$ is an admissible estimator of $E_2X = \gamma(\theta)$ for the sub-family $\mathcal{P}^*$ then $\delta(X) \in \gamma(\mathbb{R})$ almost surely. Along $\gamma$, $d\big(\gamma(t_1),\gamma(t_0)\big) = \vert t_1 - t_0 \vert$, so that  $L(\theta,\delta(X)) = \big(\theta - \gamma^{-1}\big(\delta(X)\big)\big)^2$ for any estimator $\delta(X)$ whose support is contained in the image of $\gamma$. The decision problem of finding a minimax estimator of $E_2X = \gamma(\theta)$ for the sub-family $\mathcal{P}^*$ is equivalent to the problem of finding a minimax estimator of $\theta$ under squared error loss given a sample $Y = \gamma^{-1}(X)$ from the family $\{N(\theta,\sigma^2): \theta \in \mathbb{R}\}$.  The estimator $Y$ is minimax for this normal problem from which it follows that $X$ must be minimax for the sub-family $\mathcal{P}^*$. As $\sup_{P \in \mathcal{P}^*}R(P,X) = \sigma^2 = \sup_{P \in \mathcal{P}}R(P,X)$,  $X$ is minimax for $\mathcal{P}$ \cite{Lehmann}.
\end{proof}
Theorem \ref{minimaxthm} also applies to families of the form $\mathcal{P} = \{P: \sigma_0^2 \leq V_2P \leq \sigma_1^2\}$ because $\sup_{P \in \mathcal{P}^*}R(P,X)= \sup_{P \in \mathcal{P}}R(P,X)$ for such a family. 

In both Theorems \ref{umvuthm} and \ref{minimaxthm} the unboundedness of $\mathcal{X}$ plays a necessary role in ensuring that $X$ is UMVU and minimax respectively. In a bounded metric space there may be some points in the metric space that cannot be the Fr\'echet mean of a distribution with Fr\'echet variance $\sigma^2$. For example, a trivial case of this is where $\mathcal{X} = [0,1]$ and $\sigma^2 = 1/4$. The only distribution with Fr\'echet variance $1/4$ on $\mathcal{X}$ is a Bernoulli($1/2$) distribution. The only possible Fr\'echet mean for $\mathcal{P} = \{P:V_2P = 1/4\}$ is then $1/2$. The estimator $\delta(X) = X$ is unacceptable in such a situation as it has the highest possible risk out of any estimator that could be used. Even if $\sigma^2$ is chosen to be less than $1/2$ the same issue occurs as points that are close to $\{0\}$ and $\{1\}$ cannot be Fr\'echet means of any distribution with variance $\sigma^2$. For instance, $\{0\}$ and $\{1\}$ can only be Fr\'echet means of degenerate point mass distributions. As a result, it is possible for $X$ to be an inadmissible estimator of the Fr\'echet mean for the family $\mathcal{P} = \{P:V_2P = \sigma^2\}$ in a bounded space or an unbounded space that does not contain a geodesic line. 

To resolve this inadmissibility issue it is reasonable to modify $\delta(X) = X$ by projecting it onto the set of points that can be realized as the Fr\'echet mean of a distribution in $\mathcal{P}$ \cite{Marchand}. If it exists, such a projection can be viewed as forcing $X$ into a more favorable region of $\mathcal{X}$. In the next section, shrinkage estimators are examined that push $X$ towards a chosen point in $\mathcal{X}$ that is deemed to be a reasonable initial guess of the Fr\'echet mean. This shrinkage process can be used to partially correct the undesirable behaviour of $\delta(X) = X$ in metric spaces with bounded diameter.

\section{Shrinkage Estimators in Hadamard Spaces}
\label{Sec4}
Suppose that one wishes to estimate the mean of a distribution $P$ on $\mathbb{R}^n$ given one observation $X \sim P$. If it is suspected that $EX$ is close to the point $\psi$ in $\mathbb{R}^n$ then as an alternative to using the estimator $X$ to estimate $EX$ one can instead use the shrinkage estimator $(1-t)X + t\psi = [X,\psi]_t$ for some $t \in [0,1]$. In a Hadamard space the geodesics of the space can be used to define an analogue of this shrinkage estimator. Assume that $X \sim P$ where $V_2P = \sigma^2$ is known, $E_2P = \theta$, and a squared distance loss function is used.  Given a shrinkage point $\psi \in \mathcal{X}$, the estimator $[X,\psi]_t,\; t \in [0,1],$ can be used to estimate the Fr\'echet mean $\theta$.

Even in the absence of strong prior information about $E_2X$, shrinkage estimators can be used to reduce the squared distance risk of the estimator $X$. Applying the CAT(0) bound in \eqref{cat0ineq} to the estimator $[X,\psi]_t$ gives
\begin{IEEEeqnarray}{c}
 E\big(d(\theta,[X,\psi]_t)^2\big) \leq t\sigma^2 + (1-t) d(\theta,\psi)^2 - t(1-t)E\big(d(X,\psi)^2\big).
    \label{cat0riskbd}
    \IEEEeqnarraynumspace
    \IEEEeqnarraynumspace
\end{IEEEeqnarray}
The right hand side of \eqref{cat0riskbd} is a convex, quadratic function of $t$. It is seen that if $t$ is chosen small enough, the right hand side of \eqref{cat0riskbd} is less than $\sigma^2$ and for such a $t$, $R(P,[X,\psi]_t) < R(P,X)$. It is the metric convexity of the squared distance function in a Hadamard space that makes shrinkage estimators on these spaces effective. Another manifestation of the metric convexity that motivates the use of shrinkage estimators is the bias-variance decomposition in \eqref{hadbiasvar}. As long as the distribution of $X$ is non-degenerate, $E\big(d(X,\psi)^2\big) > d(E_2X,\psi)^2$ so that $d(X,\psi)^2$ on average overestimates the squared distance of $\psi$ from $E_2X$. To correct this, the estimator $[X,\psi]_t$ is closer to $\psi$ than $X$ is. 

If it is assumed that the point $\psi$ is given, the central question is how should one go about choosing $t$ in $[X,\psi]_t$. The optimal value of $t$ that minimizes the upper bound of the risk in \eqref{cat0riskbd} is
\begin{align}
\label{optimalshrinkwt}
    \tilde{t} \coloneqq \frac{\sigma^2 + \rho(X,\psi)^2 - d(\theta,\psi)^2}{2\rho(X,\psi)^2},
\end{align}
where we use the notation $\rho(X,\psi)^2 = E\big(d(X,\psi)^2\big)$ with $\rho$ being the metric on the Hadamard space $L^2(\mathcal{X})$ defined in Section \ref{Sec2.2}. We call $\tilde{t}$ the oracle shrinkage weight although it only minimizes the risk upper bound, not the risk function. The Hadamard bias-variance inequality \eqref{hadbiasvar} shows that $\rho(X,\psi)^2 - d(\theta,\psi)^2 \geq \sigma^2$ so that $\tilde{t} \geq \sigma^2/\rho(X,\psi)^2$. Using a plug in estimate for $\rho(X,\psi)^2$, the shrinkage weight ${w(X) \coloneqq 1  \wedge \big(\sigma^2/d(X,\psi)^2\big)}$ serves as an estimate of this lower bound for $\tilde{t}$. In order to use this shrinkage weight, the Fr\'echet variance $\sigma^2$ must be a known quantity. As long as $d(X,\psi)^2$ is sufficiently concentrated around $\rho(X,\psi)^2$ then $w(X)$ will tend to underestimate $\tilde{t}$. This reduces the possibility of overshrinking $X$ when using the estimator $[X,\psi]_{w(X)}$. 

\subsection{Geodesic James-Stein Estimator}
Shrinkage estimators are typically used in a setting where observations from different groups are available and information is shared between groups to improve the estimation of group-specific parameters. A multi-group Fr\'echet mean estimation problem is formulated by first supposing that we have random objects $X = (X_1,\ldots,X_n)$ where each $X_i$ lies in the Hadamard space $(\mathcal{X}_i,d_i)$, has Fr\'echet mean $\theta_i$, a known Fr\'echet variance $\sigma_i^2$, and is independent of the other $X_j$'s. The decision problem we consider for the remainder of this article is the simultaneous estimation of the collection of Fr\'echet means $\theta = (\theta_1,\ldots,\theta_n)$ under the loss function 
$L\big(\theta,\delta(X)\big) = \sum_{i = 1}^n d_i\big(\theta_i,\delta_i(X)\big)^2/n$. This problem formulation is the same as the classical James-Stein estimation problem in the special case when $\mathcal{X}_i = \mathbb{R}$ for each $i$ and $X_i \sim N(\theta_i,\sigma^2)$ independently for $i = 1,\ldots,n$. Notice that like the classical James-Stein problem, there is no relationship assumed between the various $\theta_i$'s and  the $X_i$'s are independent and may not even take values in the same Hadamard space.    

The simultaneous point estimation problem can be viewed as estimating a single point in a larger Hadamard space. The product Hadamard space of the Hadamard spaces $(\mathcal{X}_i,d_i)$ is the set $\mathcal{X}^{(n)} \coloneqq \mathcal{X}_1 \times \cdots \times \mathcal{X}_n$ with the metric $d$ given by $d(x,y) \coloneqq \big( \sum_{i  =1}^n d_i(x_i,y_i)^2/n\big)^{1/2}$ \cite{Bacak}, where the multiplicative factor $n^{-1/2}$ is added to ease notation. Geodesics in $(\mathcal{X}^{(n)},d)$ are given pointwise by $[x,y]_t = ([x_1,y_1]_t,\ldots,[x_n,y_n]_t)$, and the CAT(0) inequality follows from the form of $d(x,y)$. The collection of observations $X = (X_1,\ldots,X_n)$ is a random object in $\mathcal{X}^{(n)}$ with Fr\'echet mean $\theta = (\theta_1,\ldots,\theta_n)$. The simultaneous point estimation problem is to estimate $E_2X = \theta$ under the loss function $L\big(\theta,\delta(X)\big) = d\big(\theta,\delta(X)\big)^2$ which is exactly the Fr\'echet mean estimation problem introduced in Section \ref{Sec3}. The added nuance in this problem is that the independence assumption on the $X_i$'s implies that $X$ must follow a product distribution on $\mathcal{X}^{(n)}$.   

By viewing $X$ as an element of the product Hadamard space $\mathcal{X}^{(n)}$, we can form the shrinkage estimator $[X,\psi]_{w(X)}$ introduced at the beginning of this section. We call $\delta_{JS}(X) \coloneqq [X,\psi]_{w(X)}$ the geodesic James-Stein estimator with shrinkage point ${\psi = (\psi_1,\ldots,\psi_n)}$. The Fr\'echet variance of $X$, which we denote by $\sigma^2$, is $E\big(d(X,\theta)^2\big) = \sum_{i = 1}^n E\big(d_i(X_i,\theta_i)^2\big)/n = \sum_{i = 1}^n \sigma_i^2/n$. The components of $\delta_{JS}(X)$ are thus
\begin{IEEEeqnarray}{c}
        \delta_{JS}(X)_j \coloneqq \bigg(1 - \big(1 \wedge \frac{\sum_{i  =1}^n \sigma_i^2}{\sum_{i = 1}^nd_i(X_i,\psi_i)^2}\big)  \bigg)X_j + \bigg( 1 \wedge \frac{\sum_{i  =1}^n \sigma_i^2}{\sum_{i = 1}^nd_i(X_i,\psi_i)^2}  \bigg)\psi_j.
    \label{jscomponents} 
    \IEEEeqnarraynumspace
    \IEEEeqnarraynumspace
\end{IEEEeqnarray}
In Euclidean space, $\mathcal{X}_i = \mathbb{R}$, the positive-part James-Stein estimator $\delta_+(X)$, for $X \sim N_n(\theta,\sigma^2I)$,  is closely related to $\delta_{JS}(X)$ since $\delta_+(X) = [X,\psi]_{1 \wedge \frac{n-2}{n}\sigma^2/d(X,\psi)^2}$. The only difference between $\delta_+(X)$ and $\delta_{JS}(X)$ is the factor $(n-2)/n$ appearing in the shrinkage weight of $\delta_+(X)$. This factor is a remnant of tailoring $\delta_+(X)$ to a Gaussian $X$. 

\subsection{James-Stein Risk Comparison}
The Gaussian James-Stein estimator dominates $X$ in squared error loss as long as the Gaussian distribution takes values in $\mathbb{R}^n$ with $n \geq 3$ \cite{Stein,James}. Similarly, we will be primarily interested in the behaviour of $R(P,\delta_{JS})$ as the dimension $n$ of the Hadamard space $\mathcal{X}^{(n)}$ increases. In typical applications each $X_i$ takes values in the same Hadamard space $\mathcal{X}$, so that $\mathcal{X}_i = \mathcal{X}$ for all $i$ and $\mathcal{X}^{(n)} = \mathcal{X}^n$. To emphasize the dimension $n$ of the Hadamard space $\mathcal{X}^{(n)}$ that $X$, $\theta$ and $\psi$ lie in, we denote these objects by $X^{(n)},\theta^{(n)}$ and $\psi^{(n)}$. Moreover, when examining how $n$ effects the behaviour of $\delta_{JS}$ it is helpful to assume that we have a sequence of random objects $\{X^{(n)}\}_{n = 1}^{\infty}$ with corresponding Fr\'echet means $\{\theta^{(n)}\}_{n = 1}^{\infty}$, as well as a sequence of shrinkage points $\{\psi^{(n)}\}_{n = 1}^{\infty}$. Note that $X^{(k)}$ and $X^{(n)}$ for $k < n$ may be completely unrelated and similarly for $\psi^{(k)}$ and $\psi^{(n)}$.

An upper bound for the loss function of $\delta_{JS}(X^{(n)})$ can be found by plugging in the expression for $[X^{(n)},\psi^{(n)}]_{w(X^{(n)})}$ into the CAT(0) bound, \eqref{cat0ineq}. Defining $A$ to be the set $\{X^{(n)}:\sigma^2 > d(X^{(n)},\psi^{(n)})^2\}$, which is equal to $\{X^{(n)}:w(X^{(n)}) < 1\}$, it is found that
\begin{align}
    L\big(\theta^{(n)},\delta_{JS}(X^{(n)})\big)  \leq &  \; I_A\bigg[\big(1-w(X^{(n)})\big)\big(d(X^{(n)},\theta^{(n)})^2 - \sigma^2\big) + w(X^{(n)})d(\theta^{(n)},\psi^{(n)})^2 -
    \nonumber
    \\
    & \; w(X^{(n)})\big(1-w(X^{(n)})\big)d(X^{(n)},\psi^{(n)})^2\bigg] + 
     I_{A^c}d(\theta^{(n)},\psi^{(n)})^2 \label{lossbound}
     \\
     = & \; \big[I_A (1-w(X^{(n)}))(d(X^{(n)},\theta^{(n)})^2 - \sigma^2)\big] + \nonumber
     \\
     & \; \big[I_Aw(X^{(n)})d(\theta^{(n)},\psi^{(n)})^2\big] + \big[I_{A^c}d(\theta^{(n)},\psi^{(n)})^2\big] \nonumber
     \\
      \coloneqq & \; (a) + (b) + (c).  \nonumber
\end{align}
Notice that the denominator of $I_Aw(X^{(n)})$ cancels with $d(X^{(n)},\psi^{(n)})^2$ so that

\noindent ${I_Aw(X^{(n)})d(X^{(n)},\psi^{(n)})^2 = I_A \sigma^2}$ which makes \eqref{lossbound} take a reasonably simple form. Heuristically, as $n \rightarrow \infty$ by the law of large numbers we expect $d(X^{(n)},\theta^{(n)})^2 - \sigma^2 \rightarrow 0$ and $d(X^{(n)},\psi^{(n)})^2 -  \rho(X^{(n)},\psi^{(n)})^2 \rightarrow 0$. As a result, the term $(a)$ should vanish and since $E(d(X^{(n)},\psi^{(n)})^2) \geq \sigma^2 + d(\theta^{(n)},\psi^{(n)})^2$ it is expected that $I_{A} \rightarrow 1$ so that $(c)$ vanishes. Furthermore, $w(X^{(n)}) = I_A\sigma^2/d(X^{(n)},\psi^{(n)})^2 + I_{A^c} \rightarrow \sigma^2/\rho(X^{(n)},\psi^{(n)})^2$, which yields the approximate risk bound
\begin{align}
    R(P,\delta_{JS}) \lessapprox  \sigma^2 \frac{d(\theta^{(n)},\psi^{(n)})^2}{\rho(X^{(n)},\psi^{(n)})^2} \leq \sigma^2 \frac{d(\theta^{(n)},\psi^{(n)})^2}{d(\theta^{(n)},\psi^{(n)})^2 + \sigma^2} < \sigma^2 = R(P,X^{(n)}), \label{approxriskbdd}
\end{align}
implying that $\delta_{JS}$ has a lower risk than $X^{(n)}$ under squared distance loss.

Regularity conditions on $d(X^{(n)},\theta^{(n)})^2$ and $d(X^{(n)},\psi^{(n)})^2$ are needed to ensure that these quantities are close enough to their respective means for large $n$. The main challenge of obtaining a domination result that is uniform over all choices of the shrinkage point $\psi^{(n)}$ is that the variance of $d(X^{(n)},\psi^{(n)})^2$ can be bounded below by a term involving $d(\theta^{(n)},\psi^{(n)})$. If the shrinkage point is chosen poorly so that $d(\theta^{(n)},\psi^{(n)})$ is large then the variance of $d(X^{(n)},\psi^{(n)})^2$ will also be large. Restrictions are needed that limit how fast the sequence, $\{d(\theta^{(n)},\psi^{(n)})\}_{n = 1}^{\infty}$, can increase. Despite this, if $\psi^{(n)}$ is chosen to be far away from $\theta^{(n)}$ then $E\big(d(X^{(n)},\psi^{(n)})^2\big)$ will be large which implies that almost no shrinkage will be applied and $\delta_{JS}(X^{(n)}) \approx X^{(n)}$. 

The behaviour of $d(X^{(n)},\theta^{(n)})$ can be controlled by bounding its moments. Given a sequence $m \coloneqq \{m_c\}_{c = 1}^{\infty}$ of positive real numbers, for each $n$ we define the family of probability distributions
\begin{align*}
    \mathcal{P}_m^{(n)} \coloneqq \{P = P_1 \times \ldots \times P_n: & V_2P = \sigma^2,\;  0 <  E\big(d_i(X_i,E_2X_i)^c\big) \leq m_c,
    \\
    & \; X_i \sim P_i, \; c \in \mathbb{N},\; i \in 1,\ldots , n\}.
\end{align*}
The family $\mathcal{P}_m^{(n)}$ is the set of product distributions on $\mathcal{X}^{(n)}$ that have a fixed Fr\'echet variance and have marginal distributions with ``central-moments" that are bounded by the sequence $m$. Recall that the Fr\'echet variance $V_2P$ is $\sumonn E\big(d_i(X_i^{(n)},\theta_i^{(n)})^2\big)/n$, and so it is an average of the Fr\'echet variances of the marginal distributions. In $\mathbb{R}^n$ the family $\mathcal{P}^{(n)}_m$ corresponds to product distributions with $E(\vert X_i^{(n)} - EX_i^{(n)} \vert^c) \leq m_c$ and $\sumonn \text{Var}(X_i^{(n)})/n = \sigma^2$. The condition $E\big(d(X_i^{(n)},E_2X_i^{(n)})^c\big) \leq m_c$ is stronger than $V_c(X_i^{(n)}) \leq m_c$ since $V_c(X_i^{(n)}) \leq E\big(d(X_i^{(n)},E_2X_i^{(n)})^c\big)$. 

The following theorem generalizes the classical Gaussian James-Stein domination result to the large non-parametric family $\mathcal{P}_m^{(n)}$. A mild assumption is needed that constrains how fast $d(\theta^{(n)},\psi^{(n)})^2$ can grow relative to the dimension of the Hadamard space $\mathcal{X}^{(n)}$. It will be shown that this assumption is automatically satisfied if the spaces $\mathcal{X}_i$ have uniformly bounded diameters. At the end of this section we will further prove that $\delta_{JS}$ asymptotically dominates $X^{(n)}$ and has a loss function that is less than $\sigma^2$ with probability tending to one, regardless of how fast $d(\theta^{(n)},\psi^{(n)})^2$ grows.   

\begin{theorem}\label{freqdomthm}
Let $\{a_n\}$ be a sequence with $a_n \rightarrow \infty$ and take $P \in \mathcal{P}_m^{(n)}$ to be any distribution on $\mathcal{X}^{(n)}$ with a Fr\'echet mean $\theta^{(n)}$ that satisfies $ d(\theta^{(n)},\psi^{(n)})^2 \leq n/a_n$.   
There exists an $n^*(m,\{a_n\})$ such that if $n \geq n^*$ then $R(P,\delta_{JS}) < R(P,X^{(n)})$.
\end{theorem}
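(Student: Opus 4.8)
The plan is to take expectations in the loss bound \eqref{lossbound} and show the result is strictly below $\sigma^2 = R(P,X^{(n)})$. Write $S^2 = d(X^{(n)},\theta^{(n)})^2$, $D^2 = d(X^{(n)},\psi^{(n)})^2$, $R_n = d(\theta^{(n)},\psi^{(n)})^2$, $w = w(X^{(n)})$, and $\rho_n^2 = \rho(X^{(n)},\psi^{(n)})^2 = E(D^2)$. First I would simplify the expectation of \eqref{lossbound}. On the event $\{w=1\}$ we have $1-w=0$, so term $(a)$ is just $(1-w)(S^2-\sigma^2)$; and since $w=1$ there too, the boundary terms $(b)$ and $(c)$ recombine into $R_n\,E(w)$. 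Using $E(S^2)=\sigma^2$ this yields the clean bound
\[
  R(P,\delta_{JS}) \;\le\; \mathrm{Cov}\!\big(1-w,\,S^2\big) \;+\; R_n\,E(w).
\]
The natural benchmark is the deterministic oracle weight $w^\star=\sigma^2/\rho_n^2$: the same algebra applied to $[X^{(n)},\psi^{(n)}]_{w^\star}$ gives risk at most $R_n w^\star=\sigma^2 R_n/\rho_n^2$, which by the bias--variance inequality \eqref{hadbiasvar} ($\rho_n^2\ge\sigma^2+R_n$) is at most $\sigma^2-\sigma^4/\rho_n^2<\sigma^2$. So it suffices to show that the covariance term together with the adaptivity gap $R_n\big(E(w)-w^\star\big)$ is of smaller order than the oracle improvement $\sigma^4/\rho_n^2$.

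Next I would record the concentration afforded by $\mathcal P_m^{(n)}$. As $S^2$ and $D^2$ are averages of $n$ independent coordinates, independence and $E\big(d_i(X_i,\theta_i)^4\big)\le m_4$ give $\mathrm{Var}(S^2)\le m_4/n$; expanding $|d_i(X_i,\psi_i)^2-d_i(\theta_i,\psi_i)^2|\le d_i(X_i,\theta_i)^2+2\,d_i(\theta_i,\psi_i)\,d_i(X_i,\theta_i)$ via the triangle inequality and invoking $m_2,m_3,m_4$ gives $\mathrm{Var}(D^2)\le C(m)(1+R_n)/n$. The same expansion bounds $\rho_n^2\le\sigma^2+m_2+2R_n$, so $\rho_n^2$ and $1+R_n$ are of the same order. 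The hypothesis $R_n\le n/a_n$ then forces $\mathrm{Var}(D^2)/\rho_n^4\to 0$, which is the quantitative concentration I will exploit.

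The third step bounds the two error terms by localizing to $B=\{\,|D^2-\rho_n^2|\le\tfrac12\rho_n^2\,\}$, whose complement has probability at most $4\,\mathrm{Var}(D^2)/\rho_n^4$ by Chebyshev. On $B$ the map $x\mapsto 1\wedge\sigma^2/x$ is smooth with derivative at most $4\sigma^2/\rho_n^4$, so $w$ and $1-w$ deviate from their oracle values by at most $\tfrac{4\sigma^2}{\rho_n^4}|D^2-\rho_n^2|$; pairing this with the mean-zero $S^2-\sigma^2$ and using Cauchy--Schwarz bounds the $B$-parts of $\mathrm{Cov}(1-w,S^2)$ and of $R_n(E(w)-w^\star)$ by constant multiples of $\tfrac{\sigma^2}{\rho_n^4}\sqrt{\mathrm{Var}(D^2)\mathrm{Var}(S^2)}$ and $R_n\tfrac{\sigma^2}{\rho_n^6}\mathrm{Var}(D^2)$. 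On $B^c$ I would use only $|1-w|\le 1$ and $w\le 1$, giving contributions at most $\sqrt{\mathrm{Var}(S^2)\,P(B^c)}$ and $R_n P(B^c)$. Substituting the variance bounds and $\rho_n^2\asymp 1+R_n$, each of these terms divided by $\sigma^4/\rho_n^2$ tends to $0$ uniformly over $R_n\le n/a_n$; picking $n^\star(m,\{a_n\})$ so the total error stays below $\sigma^4/\rho_n^2$ for $n\ge n^\star$ finishes the proof.

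The main obstacle is the badly misspecified regime $R_n\asymp n/a_n$. There the oracle improvement $\sigma^4/\rho_n^2\asymp\sigma^4 a_n/n$ is itself vanishing, so the error terms must be shown to vanish strictly faster. This is precisely why one cannot use the global Lipschitz constant $1/\sigma^2$ of the shrinkage weight (that route would demand $a_n$ grow polynomially in $n$) and must instead exploit the flatness of $x\mapsto 1\wedge\sigma^2/x$ near $x=\rho_n^2$, where its derivative is only $O(\sigma^2/\rho_n^4)$, while controlling the low-probability event $B^c$ by the crude $w\le 1$ bound rather than a Taylor remainder that would overcount the lower tail of $D^2$. The condition $a_n\to\infty$, i.e. $R_n=o(n)$, is exactly what drives $\mathrm{Var}(D^2)/\rho_n^4\to 0$ and sends each error-to-improvement ratio to zero.
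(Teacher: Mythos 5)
Your argument is correct, and it reaches the conclusion by a genuinely different route than the paper. The paper's proof splits into two regimes --- $\sup_n d(\theta^{(n)},\psi^{(n)})^2 \le M$ and $d(\theta^{(n)},\psi^{(n)})^2 \to \infty$ --- and in each regime bounds $E[(a)]$, $E[(b)]$, $E[(c)]$ separately, using Lemma \ref{cheblemma} with $k$ as large as $4$ (hence central moments up to $m_{16}$) and an explicit tail-integration of $P(1/d(X^{(n)},\psi^{(n)})^2 > t)$ to control $E[(b)]$. You instead collapse \eqref{lossbound} into the single pointwise bound $L \le (1-w)(S^2-\sigma^2) + w R_n$ (valid on $A$ and $A^c$ alike since $w=1$ on $A^c$), take expectations to get $R(P,\delta_{JS}) \le \mathrm{Cov}(1-w,S^2) + R_n E(w)$, and measure everything against the deterministic oracle weight $w^\star = \sigma^2/\rho_n^2$, whose guaranteed improvement $\sigma^4/\rho_n^2 \asymp (1+R_n)^{-1}$ comes from \eqref{hadbiasvar}. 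The error terms are then controlled by second-moment concentration of $S^2$ and $D^2$ together with the local Lipschitz constant $4\sigma^2/\rho_n^4$ of $x \mapsto 1 \wedge \sigma^2/x$ on $B$, and crude bounds $|w - w^\star|\le 1$ off $B$; I checked that each error-to-improvement ratio is $O(\sqrt{(1+R_n)/n}\,)$ or smaller, which vanishes uniformly under $R_n \le n/a_n$, so no case split is needed. Your route buys a shorter, unified argument requiring only $m_2,m_3,m_4$ and making the mechanism (oracle gain versus adaptivity cost) transparent; the paper's route produces the explicit rate bookkeeping ($O(n^{-1})$, $O(d(\theta^{(n)},\psi^{(n)})^{-2.5})$, $O(d(\theta^{(n)},\psi^{(n)})^{-4})$) that is reused verbatim in Theorems \ref{freqasmypthm}--\ref{adaptthm}, which is worth keeping in mind if you intend your version to replace it. One small inaccuracy: your stated bound $R_n \sigma^2 \rho_n^{-6}\mathrm{Var}(D^2)$ for the $B$-part of $R_n(E(w)-w^\star)$ is the second-order Taylor term and would require you to account for the first-order term's leakage onto $B^c$; the direct Lipschitz bound $R_n \cdot 4\sigma^2\rho_n^{-4}\sqrt{\mathrm{Var}(D^2)}$ is what your localization actually delivers, and it already suffices since its ratio to $\sigma^4/\rho_n^2$ is $O(\sqrt{(1+R_n)/n}\,) \le O(\sqrt{1/n + 1/a_n}\,) \to 0$.
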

\begin{proof}
See Appendix \ref{appA} for the proof.
\end{proof}

The main limitation of Theorem \ref{freqdomthm} is that the distribution of $X^{(n)} \in \mathcal{P}_m^{(n)}$ for $n \geq n^*$ must satisfy $d(\theta^{(n)},\psi^{(n)})^2/n \leq a_n^{-1} = o(1)$, 
which is similar to a condition that appears in Brown and Kou \cite{BrownKou} for a heteroskedastic normal model. Although more broadly applicable, this condition is most easily interpreted in terms of a sequence of random objects, $X^{(n)} \sim P^{(n)} \in \mathcal{P}_m^{(n)},\; n \in \mathbb{N}$. For each $n$ choose a shrinkage point $\psi^{(n)}$ and suppose that $d(\theta^{(n)},\psi^{(n)})^2/n \leq a_n^{-1}$ for all $n$. Theorem \ref{freqdomthm} guarantees that there exists an $n^*$ such that $R\big(P^{(n)},\delta_{JS}(X^{(n)})\big) < R\big(
P^{(n)},X^{(n)}\big)$ for all $n \geq n^*$. In particular, if $\lim_n d(\theta^{(n)},\psi^{(n)})^2/n \rightarrow 0$ then one can take $a_n^{-1} = d(\theta^{(n)},\psi^{(n)})^2/n$. Recall that $d(\theta^{(n)},\psi^{(n)})^2$ is an average of squared distances,  $\sumonn d_i(\theta_i^{(n)},\psi_i^{(n)})^2/n$. Therefore $\lim_n d(\theta^{(n)},\psi^{(n)})^2/n \rightarrow 0$ only requires that the average squared distance of the components of $\theta^{(n)}$ and $\psi^{(n)}$ increases at a rate that is slower than linear. Theorem \ref{freqdomthm} also shows that $n^*$ does not depend on the particular sequence of $X^{(n)}$ chosen, rather it only depends on $\{a_n^{-1}\}$ and $m$.  

Instead of starting with a sequence of random objects one can start with a sequence of shrinkage points, $\psi^{(n)}$. A dual way to view Theorem \ref{freqdomthm} is that given a sequence $a_n^{-1}$ and $m$, $\delta_{JS}$ dominates $X^{(n)}$ over the subfamily, $\{P \in \mathcal{P}^{(n)}_m:d(E_2P,\psi^{(n)})^2 \leq na_n^{-1}\}$ of $\mathcal{P}_m^{(n)}$ for $n \geq n^*(m,\{a_n\})$. A special case occurs when the metric spaces $\mathcal{X}_i$ have uniformly bounded diameters, as for a large enough $n$ this subfamily consists of all possible distributions on $\mathcal{X}^{(n)}$. This follows by taking $a_n = \sqrt{n}$ and using the fact that $d(E_2P,\psi^{(n)})^2 \leq \text{diam}(\mathcal{X}^{(n)})^2 < \infty$. Moreover, the central moments $E\big(d_i(X_i^{(n)},\theta_i^{(n)})^c\big)$ on a space with  uniformly bounded diameter cannot be larger than $\text{diam}(\mathcal{X}_i)^c$, which implies the following global domination result:     

\begin{corollary}
\label{boundeddiamcor}
If the Hadamard spaces $\mathcal{X}_i, \; i \in \mathbb{N}$ are all bounded with $\text{diam}(\mathcal{X}_i) \leq D$ for all $i$, then there exists an $n^*(D)$ such that $R(P,\delta_{JS}) < R(P,X^{(n)})$ for any distribution $P$ on $\mathcal{X}^{(n)}$ and any shrinkage point $\psi^{(n)}$, when $n \geq n^*$. 
\end{corollary}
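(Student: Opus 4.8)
The plan is to obtain the corollary as a single application of Theorem \ref{freqdomthm}, exploiting that the threshold $n^*(m,\{a_n\})$ produced there depends only on the moment sequence $m$ and on the sequence $\{a_n\}$, and not on the particular distribution $P$ or shrinkage point $\psi^{(n)}$. The strategy is therefore to choose both $m$ and $\{a_n\}$ as explicit functions of $D$ alone, so that the resulting threshold depends on $D$ alone. First I would set $a_n = \sqrt{n}$ (so $a_n \to \infty$) and $m_c = D^c$ for every $c \in \mathbb{N}$. It then remains to verify that, on spaces of diameter at most $D$, the two hypotheses of Theorem \ref{freqdomthm} hold automatically for every distribution and every shrinkage point once $n$ is large.

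For the moment bound, both $X_i$ and $\theta_i = E_2X_i$ lie in $\mathcal{X}_i$, so $d_i(X_i,\theta_i) \leq \text{diam}(\mathcal{X}_i) \leq D$ pointwise; hence $E\big(d_i(X_i,\theta_i)^c\big) \leq D^c = m_c$ for every $c$ and $i$, and every non-degenerate product distribution $P$ belongs to $\mathcal{P}_m^{(n)}$ for its own Fr\'echet variance $\sigma^2 = V_2P \leq D^2$. For the growth condition, recall $d(\theta^{(n)},\psi^{(n)})^2 = \tfrac{1}{n}\sum_{i=1}^n d_i(\theta_i,\psi_i)^2$, where each summand is at most $D^2$ by the diameter bound; thus $d(\theta^{(n)},\psi^{(n)})^2 \leq D^2$ uniformly in $n$, in $P$, and in $\psi^{(n)}$. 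Consequently the constraint $d(\theta^{(n)},\psi^{(n)})^2 \leq n/a_n = \sqrt{n}$ holds as soon as $\sqrt{n} \geq D^2$, that is, whenever $n \geq D^4$.

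Having checked both hypotheses, I would apply Theorem \ref{freqdomthm} with $m = \{D^c\}$ and $a_n = \sqrt{n}$ to get a threshold $n^*(m,\{a_n\})$ that, by construction, depends only on $D$, and then set $n^*(D) := \max\{n^*(m,\{a_n\}),\, \lceil D^4 \rceil\}$. For every $n \geq n^*(D)$ the growth constraint is satisfied and $n$ exceeds the theorem's threshold, so $R(P,\delta_{JS}) < R(P,X^{(n)})$; and since none of $m$, $\{a_n\}$, or the two verifications referred to the particular $P$ or $\psi^{(n)}$, the single threshold $n^*(D)$ works simultaneously for all of them. The one point I expect to require genuine care, rather than routine calculation, is this uniformity: I must be sure that the threshold delivered by Theorem \ref{freqdomthm} is truly uniform over all admissible $P$ -- equivalently over all Fr\'echet variances $\sigma^2 \leq D^2$ compatible with the moment bound -- and over all $\psi^{(n)}$, rather than concealing a dependence through some implicit constant. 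This is precisely the content of the notation $n^*(m,\{a_n\})$, so it comes for free once the theorem is granted; the degenerate case, in which a marginal is a point mass and the strict lower bound $0 < E\big(d_i(X_i,\theta_i)^c\big)$ fails, is excluded in keeping with the definition of $\mathcal{P}_m^{(n)}$.
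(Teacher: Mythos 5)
Your argument is exactly the paper's: the authors also obtain the corollary by applying Theorem \ref{freqdomthm} with $a_n = \sqrt{n}$ and $m_c = \text{diam}(\mathcal{X}_i)^c \leq D^c$, noting that $d(E_2P,\psi^{(n)})^2 \leq D^2$ makes the growth condition automatic for large $n$. Your write-up just makes the bookkeeping (the $n \geq D^4$ threshold and the uniformity of $n^*$ in $P$ and $\psi^{(n)}$) slightly more explicit.
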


The estimator $X^{(n)}$ is thus inadmissible for estimating the Fr\'echet mean under a squared distance loss when the $\mathcal{X}_i$'s have uniformly bounded diameters and $n$ is large enough. Notably, the dimension $n^*$ in Corollary \ref{boundeddiamcor} is independent of any choices of $\psi^{(n)}$ or $m$. Intuition for Corollary \ref{boundeddiamcor} comes from \eqref{cat0riskbd} where it is seen that there always exists an amount of shrinkage where the shrinkage estimator has lower risk than $X^{(n)}$. Under the uniform boundedness assumption on the $\mathcal{X}_i$'s the shrinkage weight $w(X^{(n)})$ concentrates around $\sigma^2/\rho(X^{(n)},\psi^{(n)})^2$ closely enough for domination to occur independently of the choice of $\psi^{(n)}$.

 Theorem \ref{freqdomthm} and Corollary \ref{boundeddiamcor} are remarkable since very few assumptions are made about the distribution of $X$, apart from assuming that the marginal distributions of $X^{(n)}$ have central moments bounded by $m_c$. On Euclidean spaces the Stein estimator has been considered for certain classes of non-normal distributions \cite{Brandwein, Srist, BrownThesis}. Most results of this type assume that $X^{(n)}$ has an elliptically symmetric distribution where further assumptions are made about various expectations of $X^{(n)}$ that allow variants of Stein's lemma to be applied. When the metric is given by an inner product, Stein's lemma is used to control the term $2\langle X^{(n)} - \theta^{(n)},\delta(X^{(n)}) - X^{(n)} \rangle$ that appears after expanding $R(P,\delta) = \Vert \delta - \theta^{(n)} \Vert^2$. In a general Hadamard space there is no such decomposition of $d(\delta,\theta^{(n)})^2$.  The assumption that the distribution of $X^{(n)}$ is spherically symmetric in $\mathbb{R}^n$ is fairly restrictive since this implies for example that the marginal distribution of each $X_i$ is the same and that $X_i$ and $-X_i$ have the same distribution.
 
 An example of a subfamily of distributions on $\mathbb{R}^n$ that is contained in $\mathcal{P}^{(n)}_m$ is the following location family \cite{Lehmann}:
 Let $F_i^{(n)}, \; i = 1,\ldots,n$ be distributions on $\mathbb{R}$ with mean $0$, variance $\sigma^2$, and central moments bounded by the sequence $\{m_c\}_{c = 1}^{\infty}$. The set of all distributions of random variables of the form $X^{(n)} = \theta^{(n)} + \epsilon^{(n)}$ for any $\theta^{(n)} \in \mathbb{R}^n$ and $\epsilon_i^{(n)} \sim F_i^{(n)}$ is contained in $\mathcal{P}_m^{(n)}$, because the location shifts $\epsilon_i^{(n)} \rightarrow  \theta_i^{(n)} + \epsilon_i^{(n)} $ do not alter any of the central moments. This location family can be restricted further by assuming that for each $n$, $\theta^{(n)}$ is known to lie is some set $\Theta^{(n)}$ with $\text{diam}(\Theta^{(n)}) \leq D$. Theorem \ref{freqdomthm} implies that if  $\psi^{(n)} \in \Theta^{(n)}$ for all $n$, then there exists a dimension $n^*(D,m)$ for which domination of $X^{(n)}$ occurs. Various results similar to this are known for distributions on $\mathbb{R}^n$ with restricted parameter spaces \cite{Marchand}. Immediate generalizations of this location family exist on arbitrary Hadamard spaces by letting the isometry group, instead of the translation group, act on a sequence of fixed distributions with bounded central moments.  

 Theorem \ref{freqdomthm} provides a domination result that applies to a subfamily of $\mathcal{P}_m^{(n)}$ for a finite number of groups. The geodesic James-Stein estimator also dominates $X$ asymptotically over all of $P_m^{(n)}$ as $n \rightarrow \infty$. 
\begin{theorem}
    Let $X^{(n)} \sim P^{(n)} \in \mathcal{P}_m^{(n)}$ for all $n \in \mathbb{N}$. If $d(\theta^{(n)},\psi^{(n)})^2 \rightarrow \infty$ for a sequence of shrinkage points $\{\psi^{(n)}\}_{n = 1}^{\infty}$, then $\limsup_n R\big(P^{(n)},\delta_{JS}(X^{(n)})\big) = \sigma^2.$ It follows from Theorem \ref{freqdomthm} that $\limsup_n R\big(P^{(n)},\delta_{JS}(X^{(n)})\big) \leq \lim_n R(P^{(n)},X^{(n)})$ for any sequence of $\psi^{(n)}$'s. Additionally, for all $\epsilon > 0$,  $\lim_n P\big(L\big(\theta^{(n)},\delta_{JS}(X^{(n)})\big) > \sigma^2 + \epsilon\big) = 0$.
\label{freqasmypthm}
\end{theorem}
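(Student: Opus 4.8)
The plan is to start from the deterministic loss bound \eqref{lossbound} and first simplify it. Since $w(X^{(n)})=1$ on $A^{c}$ and $I_{A}\,w(X^{(n)})\,d(X^{(n)},\psi^{(n)})^{2}=I_{A}\sigma^{2}$, the terms $(b)$ and $(c)$ combine into $(b)+(c)=w(X^{(n)})\,d(\theta^{(n)},\psi^{(n)})^{2}$, so with $r_{n}\coloneqq d(\theta^{(n)},\psi^{(n)})^{2}$ the loss obeys
\[
L\big(\theta^{(n)},\delta_{JS}(X^{(n)})\big)\le \big|d(X^{(n)},\theta^{(n)})^{2}-\sigma^{2}\big|+w(X^{(n)})\,r_{n}.
\]
Everything then rests on two moment estimates coming from the independence of the coordinates and the central-moment bounds defining $\mathcal{P}_{m}^{(n)}$: the average $d(X^{(n)},\theta^{(n)})^{2}$ has mean $\sigma^{2}$ and variance $O(m_{4}/n)$, while $d(X^{(n)},\psi^{(n)})^{2}$ has mean $\rho(X^{(n)},\psi^{(n)})^{2}\ge\sigma^{2}+r_{n}$ by \eqref{hadbiasvar} and, upon expanding each $d_{i}(X_{i},\psi_{i})^{2}$ about $d_{i}(\theta_{i},\psi_{i})^{2}$ and invoking $m_{2},m_{3},m_{4}$, variance $O((r_{n}+1)/n)$.

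The first estimate forces $d(X^{(n)},\theta^{(n)})^{2}\to\sigma^{2}$ in probability and in $L^{1}$, so the first term above is $o_{P}(1)$ and $o(1)$ in expectation. For the second I use $w(X^{(n)})\,r_{n}=\min\!\big(r_{n},\ \sigma^{2}r_{n}/d(X^{(n)},\psi^{(n)})^{2}\big)\le \sigma^{2}r_{n}/d(X^{(n)},\psi^{(n)})^{2}$. When $r_{n}\to\infty$ the crucial point is that the fluctuation of $d(X^{(n)},\psi^{(n)})^{2}$ about $r_{n}$ is governed by the bounded quantity $d(\theta^{(n)},X^{(n)})$, not by $r_{n}$: the reverse triangle inequality gives $d(X^{(n)},\psi^{(n)})\ge\sqrt{r_{n}}-d(\theta^{(n)},X^{(n)})$, and since $d(\theta^{(n)},X^{(n)})$ concentrates at $\sigma$ while $\sqrt{r_{n}}\to\infty$ one obtains $d(X^{(n)},\psi^{(n)})^{2}\ge r_{n}(1-o_{P}(1))$ and hence $w(X^{(n)})\,r_{n}\le\sigma^{2}+o_{P}(1)$. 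This already proves the in-probability statement $P(L>\sigma^{2}+\epsilon)\to0$. To pass to expectations I split on $\{d(X^{(n)},\psi^{(n)})^{2}\ge(1-\eta)\rho^{2}\}$, bounding $w\,r_{n}$ by $\sigma^{2}r_{n}/((1-\eta)\rho^{2})\le\sigma^{2}/(1-\eta)$ there and by $r_{n}$ times the Chebyshev-small probability of the complement elsewhere, which is $o(1)$ because $r_{n}\,\text{Var}(d(X^{(n)},\psi^{(n)})^{2})/(\eta\rho^{2})^{2}=O(1/(n\eta^{2}))$; letting $\eta\downarrow0$ yields $\limsup_{n}R(P,\delta_{JS})\le\sigma^{2}$.

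For the matching lower bound I use that $\delta_{JS}$ lies on the geodesic from $X^{(n)}$ to $\psi^{(n)}$, so $d(X^{(n)},\delta_{JS})=w(X^{(n)})\,d(X^{(n)},\psi^{(n)})$, and the reverse triangle inequality gives
\[
R(P,\delta_{JS})\ge \sigma^{2}-2\,E\big[d(\theta^{(n)},X^{(n)})\,w(X^{(n)})\,d(X^{(n)},\psi^{(n)})\big].
\]
On $A$ the factor $w\,d(X^{(n)},\psi^{(n)})=\sigma^{2}/d(X^{(n)},\psi^{(n)})$ tends to $0$, again because $d(X^{(n)},\psi^{(n)})^{2}\ge r_{n}(1-o_{P}(1))$, while on $A^{c}$ it is at most $\sigma$ and $P(A^{c})\to0$; a Cauchy--Schwarz estimate then shows the cross term is $o(1)$, so $\liminf_{n}R(P,\delta_{JS})\ge\sigma^{2}$. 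Combined with the previous paragraph this gives $\lim_{n}R(P,\delta_{JS})=\sigma^{2}$, which is the asserted equality $\limsup=\sigma^{2}$.

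Finally, the inequality $\limsup_{n}R(P,\delta_{JS})\le\lim_{n}R(P,X^{(n)})=\sigma^{2}$ for an \emph{arbitrary} sequence $\psi^{(n)}$ follows by a subsequence argument, using that $R(P,X^{(n)})=V_{2}P=\sigma^{2}$ for every $n$: every subsequence has a further subsequence along which $r_{n}$ converges in $[0,\infty]$; if the limit is $\infty$ the estimate of the second paragraph applies, and if it is finite then $r_{n}$ is bounded, hence eventually at most $n/\sqrt{n}=\sqrt{n}$, so Theorem \ref{freqdomthm} with $a_{n}=\sqrt{n}$ gives $R(P,\delta_{JS})<\sigma^{2}$ outright. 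I expect the main obstacle to be the uniform control of $w(X^{(n)})\,r_{n}$ when $r_{n}\to\infty$ at an unrestricted rate: a naive Chebyshev bound on $d(X^{(n)},\psi^{(n)})^{2}$ degrades as $r_{n}$ grows, and the argument only closes because the relevant fluctuation is that of the bounded variable $d(\theta^{(n)},X^{(n)})$ rather than of $r_{n}$ itself, the same mechanism that prevents $\delta_{JS}$ from over-shrinking when $\psi^{(n)}$ is placed far from $\theta^{(n)}$.
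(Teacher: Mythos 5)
Your proposal is correct, and while it starts from the same decomposition \eqref{lossbound} and the same bias--variance input \eqref{hadbiasvar}, the technical route differs from the paper's in ways worth noting. You collapse $(b)+(c)$ into the single term $w(X^{(n)})\,d(\theta^{(n)},\psi^{(n)})^2$ and bound $(a)$ by $|d(X^{(n)},\theta^{(n)})^2-\sigma^2|$, then control $E[w\,r_n]$ by splitting on the concentration of $d(X^{(n)},\psi^{(n)})^2$ about its mean $\rho(X^{(n)},\psi^{(n)})^2$, which needs only the $k=1$ variance bound of Lemma \ref{cheblemma}; the paper instead imports $E[(a)]=O(n^{-1/2})$ and $E[(c)]=O(d(\theta^{(n)},\psi^{(n)})^{-4})$ from the proof of Theorem \ref{freqdomthm} and controls $E[(b)]$ via the reverse triangle inequality in the denominator together with a sixth-moment Chebyshev bound on the event $\{2d(X^{(n)},\theta^{(n)})>d(\theta^{(n)},\psi^{(n)})^{1/2}\}$. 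The underlying mechanism --- that the fluctuation of $d(X^{(n)},\psi^{(n)})$ is governed by the tight variable $d(X^{(n)},\theta^{(n)})$ rather than by $d(\theta^{(n)},\psi^{(n)})$ --- is the same in both arguments, so your version buys a slightly lighter moment requirement at the cost of the extra $\eta$-limit. More significantly, you prove the matching lower bound $\liminf_n R(P^{(n)},\delta_{JS})\ge\sigma^2$ via $d(X^{(n)},\delta_{JS})=w(X^{(n)})\,d(X^{(n)},\psi^{(n)})$ and the reverse triangle inequality; the paper's proof only establishes $\limsup_n R\le\sigma^2$ and never addresses the asserted equality, so your argument is actually the more complete one on this point. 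Your subsequence dichotomy for arbitrary $\{\psi^{(n)}\}$, invoking Theorem \ref{freqdomthm} with $a_n=\sqrt n$ on the bounded branch, is a clean formalization of what the theorem statement merely asserts. The only small gap: the final in-probability claim is not restricted to $d(\theta^{(n)},\psi^{(n)})^2\to\infty$, whereas your argument for it is phrased under that hypothesis; the bounded case is immediate, since the same Chebyshev bound on $P\big(d(X^{(n)},\psi^{(n)})^2<\sigma^2 r_n/(\sigma^2+\epsilon/2)\big)$ uses the gap $\rho^2-\sigma^2 r_n/(\sigma^2+\epsilon/2)\ge\sigma^2$ and the variance $O((r_n+1)/n)$ to give $O(n^{-1})$ uniformly in $r_n$, but you should say so explicitly.
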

\begin{proof}
See Appendix \ref{appA} for the proof.
\end{proof}

Theorem \ref{freqasmypthm} makes explicit the observation that $\delta_{JS}$ behaves similarly to $X^{(n)}$ when the shrinkage point is chosen to be far away from $E_2X^{(n)}$. Consequently, in a simultaneous Fr\'echet mean estimation problem with a large number of groups the geodesic James-Stein estimator has performance that is comparable to, or much better than, the estimator $X^{(n)}$.

The results in this section also apply to estimators of the form $[X^{(n)},\psi^{(n)}]_{\alpha w(X^{(n)})}$ where $\alpha \in (0,1]$. Such estimators apply an amount of shrinkage that is proportional to, but less than $\delta_{JS}$. It follows that $${[X^{(n)},\psi^{(n)}]_{\alpha w(X^{(n)})} = [X^{(n)},[X^{(n)},\psi^{(n)}]_{w(X^{(n)})}]_\alpha = [X^{(n)},\delta_{JS}]_\alpha},$$ from which the convexity of the squared distance function implies that
\begin{align}
d(\theta^{(n)},[X^{(n)},\psi^{(n)}]_{\alpha w(X^{(n)})})^2 \leq (1-\alpha) d(\theta^{(n)},X^{(n)})^2 + \alpha d(\theta^{(n)},\delta_{JS})^2.
    \label{lowervarbdrisk}
\end{align}
The risk of $[X^{(n)},\psi^{(n)}]_{\alpha w(X^{(n)})}$ is therefore no larger than a convex combination of the risk of $X^{(n)}$ and the risk of $\delta_{JS}$. Estimators of this form are useful when the value of $\sigma^2$ that appears in $w(X^{(n)})$ is not known but instead it is known that $\sigma^2$ is bounded below by $\alpha_0 > 0$, so that  $\alpha_0/\sigma^2 \leq 1$. By taking $\alpha = \alpha_0/\sigma^2$ the shrinkage weight $\alpha w(X^{(n)})$ is equal to  $\alpha_0/d(X^{(n)},\psi^{(n)})^2$ when the event $\{X^{(n)}:\sigma^2/d(X^{(n)},\psi^{(n)})^2 \leq 1\}$ occurs. Consequently, the estimator $[X^{(n)},\psi^{(n)}]_{\tilde{w}}$ where $\tilde{w} = 1 \wedge \alpha_0/d(X^{(n)},\psi^{(n)})^2$ will have the same large sample risk properties as $\delta_{JS}$.

\section{Analysis of the Bayes risk of $\delta_{JS}$}
\label{Sec5}

Efron and Morris \cite{Efron} show that the James-Stein estimator may be interpreted as an empirical 
Bayes procedure as follows: If $X^{(n)} \sim N_n(\theta^{(n)},\sigma^2 I)$ and 
the prior distribution for $\theta^{(n)}$ is $\theta^{(n)} \sim N_n(\mu^{(n)}, \tau^2 I)$, then 
the posterior mean estimator of $\theta^{(n)}$ is the linear shrinkage estimator 
 $(1-t)X^{(n)} + t \mu^{(n)}$, with $t = \sigma^2/(\sigma^2 + \tau^2)$. 
 If an appropriate choice of $\tau^2$ is not available, 
 Efron and Morris suggest empirically estimating its value from the data. 
 Specifically, they show that $(n-2)/\sumonn(X_i^{(n)} -\mu_i^{(n)})^2$
 is an unbiased estimator of $1/(\sigma^2 + \tau^2)$ with respect to the 
 marginal distribution of $X$. Plugging this into the expression for $t$ yields the 
 James-Stein estimator $\delta_{JS}$.  
 Whereas Stein's results on risk concerned frequentist risk, that is, risk as a function of $\theta^{(n)}$, 
Efron and Morris obtained results on the Bayes risk, the average frequentist risk with respect to the prior distribution $\theta^{(n)} \sim N_n(\mu^{(n)}, \tau^2 I)$. They showed that not only is $\delta_{JS}$ better than $X^{(n)}$ with respect to Bayes risk, $\delta_{JS}$ is almost as good as the posterior mean estimator, which is Bayes-risk optimal. For any value of $\tau^2$, the Bayes risk of $\delta_{JS}$ approaches that of the optimal posterior mean estimator as $n\rightarrow \infty$. 

In this section, we consider similar results for the geodesic James-Stein estimator. 
We first examine the Bayes risk of the geodesic James-Stein estimator in the case that
the shrinkage point is fixed at $\psi^{(n)}$.  In this case, the Bayes risk is bounded above in terms of the distance between the shrinkage point $\psi^{(n)}$ and the prior Fr\'echet mean of $\theta^{(n)}$. If the dimension $n$ is sufficiently large, $\delta_{JS}$ will have a smaller Bayes risk than $X^{(n)}$.
However, there is no guarantee that the risk of $\delta_{JS}$ will asymptotically approach the minimum Bayes risk as $n\rightarrow \infty$. 
The absence of such a result is not surprising, 
since in general 
the Bayes estimator may not be a geodesic shrinkage estimator of the form $[X^{(n)},\psi^{(n)}]_t$. For example, even for Euclidean sample spaces, Bayes estimators will not generally be linear shrinkage estimators unless the model is an exponential family and 
the prior distribution is conjugate \cite{Diaconis}.
Next, we compare the Bayes risk of $X^{(n)}$ to that of a potentially more 
useful shrinkage estimator, one for which the shrinkage point is empirically 
estimated from the data $X^{(n)}$. This is done in a setting that generalizes the 
simple hierarchical normal model $X^{(n)} \sim N_n(\theta^{(n)}, \sigma^2I)$ and 
$\theta^{(n)} \sim N_n( \tilde \mu 1 , \tau^2 I)$, where $\tilde \mu \in \mathbb R$ and $1$ is 
an $n$-vector of all ones. Empirical Bayes estimation of both 
$\tilde \mu$ and $\tau^2$ is possible since they are common to all elements of $\theta^{(n)}$, 
and therefore, common to all elements of $X^{(n)}$.  We consider an analogous scenario in which the prior Fr\'echet mean of each element of $\theta^{(n)}$ is equal to a common value $\tilde \mu$. Under this assumption, $\tilde{\mu}$ can approximately be estimated by  the sample Fr\'echet mean $\bar{X}^{(n)}$ of $X_1^{(n)},\ldots, X_n^{(n)}$. The resulting estimator $\delta_{JS}$ has a smaller Bayes risk than $X^{(n)}$, where unlike in the frequentist case, this result is global and does not only apply to a sub-family of $\mathcal{P}_{m}^{(n)}$. Recall that the primary difficulty in obtaining a global domination result of $\delta_{JS}$ over $X^{(n)}$ in the frequentist case was that the shrinkage point may be far away from $\theta^{(n)}$. By adaptively choosing the shrinkage point in the Bayesian setting there is no longer this concern as $\Bar{X}^{(n)}$ will be reasonably close to $\theta^{(n)}$ with high probability.

\subsection{Bayes Risk of $\delta_{JS}$}
\label{sec5.1} 

Throughout this section we work with a prior distribution $Q^{(n)} = Q_1^{(n)} \times \cdots \times Q_n^{(n)}$ for the estimand $\theta^{(n)} = (\theta_1^{(n)},\ldots,\theta_n^{(n)})$, so that the components $\theta_i^{(n)}$ of $\theta^{(n)}$ are mutually independent under this prior distribution. Let $\mu^{(n)} \in \mathcal{X}^{(n)}$ be the Fr\'echet mean of $Q^{(n)}$ and take $\tau^2$ to be the Fr\'echet variance of $Q^{(n)}$. Conditional on $\theta^{(n)}$ the distribution $P_{i,\theta_i^{(n)}}^{(n)}$ of $X_i^{(n)}$ is assumed to have Fr\'echet mean $\theta_i^{(n)}$ and Fr\'echet variance $\sigma_i^2$. Furthermore, we assume conditional independence of the $X_i^{(n)}$ given $\theta^{(n)}$ so that this conditional distribution is denoted by $P_{\theta^{(n)}}^{(n)} = P_{1,\theta_1^{(n)}}^{(n)} \times \cdots \times P_{n,\theta_n^{(n)}}^{(n)}$. Lastly we assume some additional moment conditions so that $Q^{(n)} \in \mathcal{P}_{l}^{(n)}$ for some sequence $l = \{l_c\}_{c = 1}^{\infty}$ and $P_\theta^{(n)} \in \mathcal{P}_m^{(n)}$ for every $\theta \in \mathcal{X}^{(n)}$ for some sequence $m = \{m_c\}_{c = 1}^{\infty}$. In summary, the joint distribution of $X$ and $\theta$ has the form
\begin{align}
& \theta^{(n)} \sim Q^{(n)} = Q_1^{(n)} \times \ldots \times Q_n^{(n)} \in \mathcal{P}_l^{(n)}, \;\;  E_2Q^{(n)} = \mu^{(n)}, \;\;  V_2Q^{(n)} = \tau^2, \nonumber
\\
 &   X^{(n)}|\theta^{(n)} \sim P_{\theta^{(n)}}^{(n)} = P_{1,\theta_1^{(n)}}^{(n)} \times \cdots \times P_{n,\theta_n^{(n)}}^{(n)} \in \mathcal{P}_m^{(n)}, \;\;   E_2P_{\theta^{(n)}}^{(n)} = \theta^{(n)},\;\; V_2P_\theta^{(n)} = \sigma^2.
   \label{hierdistassume}
\end{align}
The results of this section remain non-parametric as they apply to any choice $Q^{(n)}$ and $P_{\theta^{(n)}}^{(n)}$ that satisfy \eqref{hierdistassume}. 
 Notice that the model formulation in \eqref{hierdistassume} still does not explicitly posit any relationship between the distributions of the various $(X_i^{(n)},\theta_i^{(n)})$'s. Certain choices of $P_{\theta^{(n)}}^{(n)}$ and $Q^{(n)}$ will however induce similarities between the distributions of the $(X_i^{(n)},\theta_i^{(n)})$'s. For example, the standard Gaussian hierarchical model is encompassed by \eqref{hierdistassume} by taking $P_{\theta^{(n)}}^{(n)} = N_n(\theta^{(n)},\sigma^2 I)$ and $Q^{(n)} = N_n(\mu^{(n)},\tau^2 I)$.

As in Section \ref{Sec4}, the estimation problem of interest is to estimate $\theta^{(n)}$ under squared distance loss where the only known quantities in \eqref{hierdistassume} are $X^{(n)}$ and $\sigma^2$. Theorem \ref{freqdomthm} extends to this setting where a prior distribution is placed on $\theta^{(n)}$ by evaluating the performance of $\delta_{JS}(X^{(n)})$ in terms of its Bayes risk.
\begin{theorem}
\label{Bayesdom1}
Under the distributional assumptions in \eqref{hierdistassume}, suppose that there is a sequence $a_n \rightarrow \infty$ such that  $ d(\mu^{(n)},\psi^{(n)})^2 \leq n/a_n$. There exists an $n^*(m,l,\{a_n\})$ such that if $n \geq n^*$ then the Bayes risk satisfies $E\big(R(P_{\theta^{(n)}}^{(n)},\delta_{JS})\big) < E\big(R(P_{\theta^{(n)}}^{(n)},X^{(n)})\big)$.
\end{theorem}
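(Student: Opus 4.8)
The plan is to integrate the frequentist risk analysis against the prior $Q^{(n)}$. Since $R(P_{\theta^{(n)}}^{(n)},X^{(n)}) = V_2 P_{\theta^{(n)}}^{(n)} = \sigma^2$ for \emph{every} $\theta^{(n)}$, the Bayes risk of $X^{(n)}$ equals $\sigma^2$, so it suffices to show that the Bayes risk $E_{Q^{(n)}}\big(R(P_{\theta^{(n)}}^{(n)},\delta_{JS})\big)$ falls strictly below $\sigma^2$ once $n$ is large. I would begin from the loss bound \eqref{lossbound}, $L(\theta^{(n)},\delta_{JS}) \leq (a)+(b)+(c)$, and take the full expectation $E_{Q^{(n)}}E_{P_{\theta^{(n)}}^{(n)}}$ of each piece, reusing the term-by-term estimates that underlie Theorem \ref{freqdomthm} but now keeping track of their uniformity in $\theta^{(n)}$.

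The two groups of terms behave very differently. The term $(a)=I_A(1-w)\big(d(X^{(n)},\theta^{(n)})^2-\sigma^2\big)$ has conditional mean that the frequentist argument shows to be negligible relative to the improvement coming from the other terms; I would import that estimate, now requiring it uniformly in $\theta^{(n)}$ so that it persists after integrating against $Q^{(n)}$. For $(b)$ and $(c)$ I would use the algebraic collapse $(b)+(c) = w\,d(\theta^{(n)},\psi^{(n)})^2$, which holds because the estimator equals $\psi^{(n)}$ (that is $w=1$) exactly on $A^c$, together with $w \leq \sigma^2/d(X^{(n)},\psi^{(n)})^2$. Concentration of $d(X^{(n)},\psi^{(n)})^2$ about its conditional mean $\rho(X^{(n)},\psi^{(n)})^2 \geq \sigma^2 + d(\theta^{(n)},\psi^{(n)})^2$ (the inequality is \eqref{hadbiasvar}) then gives, as in the heuristic \eqref{approxriskbdd}, the conditional bound $E_{P_{\theta^{(n)}}^{(n)}}[(b)+(c)] \leq \sigma^2 - \sigma^4/\big(\sigma^2 + d(\theta^{(n)},\psi^{(n)})^2\big) + o(1)$.

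Integrating over $Q^{(n)}$ reduces the claim to showing that the averaged improvement $\sigma^4\,E_{Q^{(n)}}\big[1/(\sigma^2 + d(\theta^{(n)},\psi^{(n)})^2)\big]$ dominates the accumulated error. By Jensen's inequality applied to the convex map $x \mapsto 1/(\sigma^2+x)$, this improvement is at least $\sigma^4/\big(\sigma^2 + E_{Q^{(n)}}[d(\theta^{(n)},\psi^{(n)})^2]\big)$. Combining the triangle inequality $d(\theta^{(n)},\psi^{(n)}) \leq d(\theta^{(n)},\mu^{(n)}) + d(\mu^{(n)},\psi^{(n)})$ with $E_{Q^{(n)}}[d(\theta^{(n)},\mu^{(n)})^2] = V_2 Q^{(n)} = \tau^2$ and the hypothesis $d(\mu^{(n)},\psi^{(n)})^2 \leq n/a_n$ bounds $E_{Q^{(n)}}[d(\theta^{(n)},\psi^{(n)})^2] \leq (\tau + \sqrt{n/a_n})^2$, so the improvement is of order $\sigma^4 a_n/n$. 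As $a_n \to \infty$, this eventually beats the error terms, which fixes the threshold $n^*(m,l,\{a_n\})$.

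The main obstacle is the tension between the size of the improvement and the error it must overcome. Because the per-$\theta^{(n)}$ gap can be as small as $\sigma^4 a_n/n$, the control of $(a)$ must genuinely be $o(a_n/n)$ and uniform in $\theta^{(n)}$, rather than the crude $O(n^{-1/2})$ that a bare variance bound on $d(X^{(n)},\theta^{(n)})^2$ would supply; obtaining this sharper, prior-averaged estimate of the covariance between the shrinkage weight and $d(X^{(n)},\theta^{(n)})^2-\sigma^2$ — which rests on the weight depending on each coordinate only at order $1/n$ — is the delicate step. A related subtlety is that the conditional bound on $(b)+(c)$ degrades for $\theta^{(n)}$ atypically far from $\mu^{(n)}$; such $\theta^{(n)}$ must be shown to carry negligible prior mass, which follows from the concentration of $d(\theta^{(n)},\mu^{(n)})^2$ about $\tau^2$ afforded by the moment constraint $Q^{(n)} \in \mathcal{P}_l^{(n)}$, so that their excess risk — still capped at $\sigma^2 + o(1)$ by the loss bound — is multiplied by a vanishing probability.
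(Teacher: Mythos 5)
Your overall route is the same as the paper's: integrate the term-by-term frequentist bounds for $(a),(b),(c)$ in \eqref{lossbound} against the prior, apply Jensen's inequality to the convex map $x\mapsto 1/(\sigma^2+x)$ together with $E\big(d(\theta^{(n)},\psi^{(n)})^2\big)^{1/2}\leq \tau+\sqrt{n/a_n}$, and control atypically distant $\theta^{(n)}$ via the prior moment bounds $Q^{(n)}\in\mathcal{P}_l^{(n)}$. The identity $(b)+(c)=w\,d(\theta^{(n)},\psi^{(n)})^2$ is a clean observation the paper does not state explicitly, and you correctly diagnose that everything hinges on the error terms being $o(a_n/n)$ rather than merely $O(n^{-1/2})$.

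That, however, is exactly the step you leave unexecuted, and the mechanism you gesture at is not available here. Attributing the sharpening of $E[(a)\mid\theta^{(n)}]$ to ``the weight depending on each coordinate only at order $1/n$'' is a Stein's-lemma-type covariance heuristic, and the paper points out (in the discussion following Corollary \ref{boundeddiamcor}) that no such decomposition of $d(\delta,\theta^{(n)})^2$ exists in a general Hadamard space. What actually closes the gap is a two-regime split. When $d(\theta^{(n)},\psi^{(n)})^2\leq\tilde M$ the improvement $\sigma^4/\big(\sigma^2+d(\theta^{(n)},\psi^{(n)})^2\big)$ is bounded below by a constant, so the crude $O(n^{-1/2})$ bound on $(a)$ suffices. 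When $d(\theta^{(n)},\psi^{(n)})^2>\tilde M$ one writes $I_A(1-w)=1-I_{A^c}-I_Aw$, uses $E\big[d(X^{(n)},\theta^{(n)})^2-\sigma^2\mid\theta^{(n)}\big]=0$ to annihilate the leading term, and bounds the remainder by $\tfrac{2\sigma^2}{d(\theta^{(n)},\psi^{(n)})^2}E\big|\sigma^2-d(X^{(n)},\theta^{(n)})^2\big|+\sigma^2P(B^c)$, which is $O\big(d(\theta^{(n)},\psi^{(n)})^{-2}n^{-1/2}\big)+O\big(d(\theta^{(n)},\psi^{(n)})^{-4}\big)$ and hence $o$ of the improvement $O\big(d(\theta^{(n)},\psi^{(n)})^{-2}\big)$ regardless of how slowly $a_n$ grows. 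Two further points you pass over: first, the ``$+\,o(1)$'' in your conditional bound for $(b)+(c)$ must itself be $o(a_n/n)$, and the paper obtains $O(n^{-1})$ there only after the explicit tail-probability integration around \eqref{rationalbound}, exploiting the extra factor $n^{-1}$ in the $k=1$ case of Lemma \ref{cheblemma}; a qualitative concentration statement does not deliver this. Second, on the prior-tail event the conditional risk is \emph{not} capped at $\sigma^2+o(1)$ --- by metric convexity it can be of order $d(\mu^{(n)},\psi^{(n)})^2+d(\theta^{(n)},\mu^{(n)})^2$, which may diverge --- so one needs Cauchy--Schwarz together with a sufficiently high moment ($k$ large in Lemma \ref{cheblemma}) to make that contribution $O\big(d(\mu^{(n)},\psi^{(n)})^{-4}\big)$. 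Without these quantitative inputs the inequality defining $n^*$ does not follow from your outline.
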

\begin{proof}
See Appendix \ref{appA} for the proof. 
\end{proof}

The bound on the distance $d(\theta^{(n)},\psi^{(n)})^2$ that appears in Theorem \ref{freqdomthm} is replaced by a bound on $d(\mu^{(n)},\psi^{(n)})^2$ in Theorem \ref{Bayesdom1}. A special sub-model of \eqref{hierdistassume} where the condition $d(\mu^{(n)},\psi^{(n)})^2/n = o(1)$ is easily satisfied is where $\mathcal{X}_i = \mathcal{X}$ for all $i$ and $Q^{(n)}$ has the form $Q^{(n)} = \tilde{Q} \times \cdots \times \tilde{Q}$ for all $n$.  Throughout this section, tildes will be used to denote points, metrics and distributions on $\mathcal{X}$ when $\mathcal{X}^{(n)} = \mathcal{X}^n$ is a Cartesian product of $\mathcal{X}$.  If $\psi^{(n)} = (\tilde{\psi},\ldots,\tilde{\psi})$ is chosen to have identical component-wise entries for all $n$ then $d(\mu^{(n)},\psi^{(n)})^2 = \tilde{d}(\tilde{\mu},\tilde{\psi})^2$ is constant over $n$ and so it is $o(n)$. Using such a sequence of $\psi^{(n)}$'s, Theorem \ref{Bayesdom1} guarantees the existence of an $n^*$ for which $\delta_{JS}$ has a smaller Bayes risk than $X^{(n)}$ for $n \geq n^*$. The dimension that is needed for this smaller Bayes risk is still shrinkage point dependent since it is contingent on the value of $\tilde{d}(\tilde{\mu},\tilde{\psi})^2$. In this case we can write $n^*(m,l,\{a_n\})$ as $n^*\big(m,l,\tilde{d}(\tilde{\mu},\tilde{\psi})\big)$.

Theorem \ref{Bayesdom1} applies to the location family example introduced in the previous section where $X^{(n)} = \theta^{(n)} + \epsilon^{(n)}$. The only modification needed is that $\theta^{(n)}$ is now assumed to have the distribution $\theta_i^{(n)} \sim \tilde{Q} \in \mathcal{P}^{(1)}_l$ independently for $i = 1,\ldots,n$. Even in this specific example, the class of distributions on $\theta^{(n)}$ and $\epsilon^{(n)}$ to which these results hold is very broad. Suppose that the shrinkage point is taken to have equal component-wise entries, $\tilde{\psi}$. The dimension $n^*(m,l,\tilde{d}(\tilde{\mu},\tilde{\psi}))$ needed holds for any mean zero error distribution of $\epsilon^{(n)}$ that is in $\mathcal{P}_m^{(n)}$ with $V_2\epsilon^{(n)} = \sigma^2$. Likewise, $n^*(m,l,\tilde{d}(\tilde{\mu},\tilde{\psi}))$ applies to any distribution $\tilde{Q} \in \mathcal{P}^{(1)}_l$ as long as $\tilde{d}(E_2\tilde{Q},\tilde{\psi}) \leq \tilde{d}(\tilde{\mu},\tilde{\psi})$.

Theorem \ref{freqasmypthm} can similarly be extended to a Bayesian setting. 
\begin{theorem}
\label{Bayesdom2}
Let $X^{(n)} \sim P^{(n)}_{\theta^{(n)}}, \; n \in \mathbb{N}$ and $E_2X^{(n)} = \theta^{(n)} \sim Q^{(n)}, \; n \in \mathbb{N}$ satisfy the distributional assumptions in \eqref{hierdistassume}. If $d\big(\mu^{(n)},\psi^{(n)}\big)^2 \rightarrow \infty$ for a sequence of shrinkage points $\{\psi^{(n)}\}_{n = 1}^{\infty}$, then $\limsup_n E\big(R(P^{(n)}_{\theta^{(n)}},\delta_{JS})\big) = \lim_n E\big(R(P^{(n)}_{\theta^{(n)}},X^{(n)})\big)$. By Theorem \ref{Bayesdom1}, for any sequence of $\psi^{(n)}$'s, $\limsup_n E\big(R(P^{(n)}_{\theta^{(n)}},\delta_{JS})\big) \leq \lim_n E\big(R(P^{(n)}_{\theta^{(n)}},X^{(n)})\big)$, with strict inequality if $d\big(\mu^{(n)},\psi^{(n)}\big)^2/n = o(1)$. Additionally, we have that for all $\epsilon > 0$, $\lim_n P\big(L(\theta^{(n)},\delta_{JS}) > \sigma^2 + \epsilon\big) = 0$.
\end{theorem}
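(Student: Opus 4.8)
The plan is to exploit that the Bayes risk of $X^{(n)}$ equals $\sigma^2$ for every $n$: conditioning on $\theta^{(n)}$ gives $E\big(d(X^{(n)},\theta^{(n)})^2\mid\theta^{(n)}\big)=V_2P^{(n)}_{\theta^{(n)}}=\sigma^2$, so $E\big(R(P^{(n)}_{\theta^{(n)}},X^{(n)})\big)=\sigma^2$ and the right-hand side of the asserted equality is $\sigma^2$. It thus suffices to show $\lim_n E\big(L(\theta^{(n)},\delta_{JS})\big)=\sigma^2$, which trivially gives the $\limsup$ statement. The geometric fact I would build on is the geodesic identity $d(X^{(n)},\delta_{JS})=w(X^{(n)})\,d(X^{(n)},\psi^{(n)})$, which is uniformly bounded by $\sigma$: it equals $\sigma^2/d(X^{(n)},\psi^{(n)})$ on $\{w<1\}$ and equals $d(X^{(n)},\psi^{(n)})\le\sigma$ on $\{w=1\}$. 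First I would apply the triangle inequality in both directions around $X^{(n)}$ to get
\begin{align*}
\big|\,d(\theta^{(n)},\delta_{JS})^2-d(\theta^{(n)},X^{(n)})^2\,\big|\le 2\,d(\theta^{(n)},X^{(n)})\,d(X^{(n)},\delta_{JS})+d(X^{(n)},\delta_{JS})^2,
\end{align*}
and then, taking expectations, using Cauchy--Schwarz and $E\big(d(\theta^{(n)},X^{(n)})^2\big)=\sigma^2$,
\begin{align*}
\big|\,E(L)-\sigma^2\,\big|\le 2\sigma\sqrt{E\big(d(X^{(n)},\delta_{JS})^2\big)}+E\big(d(X^{(n)},\delta_{JS})^2\big).
\end{align*}
The entire equality therefore reduces to the single estimate $E\big(d(X^{(n)},\delta_{JS})^2\big)\to 0$.

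To prove this estimate I would show that $d(X^{(n)},\psi^{(n)})^2$ is large with high probability. Write $\mu_n:=E\big(d(X^{(n)},\psi^{(n)})^2\big)$ under the joint law of $(X^{(n)},\theta^{(n)})$. Applying the bias--variance inequality \eqref{hadbiasvar} first conditionally on $\theta^{(n)}$ and then to the prior $Q^{(n)}$ gives the two-level bound $\mu_n\ge\sigma^2+\tau^2+d(\mu^{(n)},\psi^{(n)})^2$, which tends to infinity by hypothesis. Since $d(X^{(n)},\delta_{JS})^2=w^2 d(X^{(n)},\psi^{(n)})^2$ equals $\sigma^4/d(X^{(n)},\psi^{(n)})^2$ on $\{w<1\}$ and is at most $\sigma^2$ everywhere, splitting at the threshold $M_n=\mu_n/2$ yields
\begin{align*}
E\big(d(X^{(n)},\delta_{JS})^2\big)\le\frac{\sigma^4}{M_n}+\sigma^2\,P\big(d(X^{(n)},\psi^{(n)})^2\le M_n\big).
\end{align*}
The first term vanishes, and the second is handled by Chebyshev's inequality, $P\big(d(X^{(n)},\psi^{(n)})^2\le\mu_n/2\big)\le 4\,\mathrm{Var}\big(d(X^{(n)},\psi^{(n)})^2\big)/\mu_n^2$, provided $\mathrm{Var}\big(d(X^{(n)},\psi^{(n)})^2\big)=o(\mu_n^2)$.

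The hard part will be this variance bound, precisely because, as noted after \eqref{approxriskbdd}, $\mathrm{Var}\big(d(X^{(n)},\psi^{(n)})^2\big)$ itself grows with $d(\mu^{(n)},\psi^{(n)})$, so one must verify it grows strictly slower than $\mu_n^2$. By independence across coordinates, $\mathrm{Var}\big(d(X^{(n)},\psi^{(n)})^2\big)=n^{-2}\sum_i\mathrm{Var}\big(d_i(X_i,\psi_i)^2\big)$. I would bound each summand by centering at the deterministic value $\beta_i^2:=d_i(\mu_i^{(n)},\psi_i^{(n)})^2$, so $\mathrm{Var}\big(d_i(X_i,\psi_i)^2\big)\le E\big((d_i(X_i,\psi_i)^2-\beta_i^2)^2\big)$, and then factor the difference of squares using $|d_i(X_i,\psi_i)-d_i(\mu_i,\psi_i)|\le d_i(X_i,\theta_i)+d_i(\theta_i,\mu_i)$. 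The central-moment bounds $m$ and $l$ make every moment $E\big((d_i(X_i,\theta_i)+d_i(\theta_i,\mu_i))^k\big)$ uniformly bounded in $i$, leaving $\mathrm{Var}\big(d_i(X_i,\psi_i)^2\big)\le C\,(1+\beta_i+\beta_i^2)$ for a constant $C$ depending only on $m,l$. Summing, and using $n^{-1}\sum_i\beta_i^2=d(\mu^{(n)},\psi^{(n)})^2\le\mu_n$ together with Cauchy--Schwarz for $\sum_i\beta_i$, gives $\mathrm{Var}\big(d(X^{(n)},\psi^{(n)})^2\big)=O(\mu_n/n)$, which is $o(\mu_n^2)$ since $\mu_n\to\infty$. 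This closes the reduction and proves $E(L)\to\sigma^2$.

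Finally, the in-probability statement follows from the same two ingredients. The bound $E\big(d(X^{(n)},\delta_{JS})^2\big)\to 0$ gives $d(X^{(n)},\delta_{JS})\to 0$ in probability, while the one-level Chebyshev bound $\mathrm{Var}\big(d(X^{(n)},\theta^{(n)})^2\mid\theta^{(n)}\big)\le n^{-1}m_4$ gives $d(X^{(n)},\theta^{(n)})^2\to\sigma^2$ in probability; feeding both into $L\le d(X^{(n)},\theta^{(n)})^2+2\,d(X^{(n)},\theta^{(n)})\,d(X^{(n)},\delta_{JS})+d(X^{(n)},\delta_{JS})^2$ shows $L\to\sigma^2$ in probability, hence $P(L>\sigma^2+\epsilon)\to 0$. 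The auxiliary claim $\limsup_n E(L)\le\sigma^2$ for an arbitrary sequence $\{\psi^{(n)}\}$, and its strict form under $d(\mu^{(n)},\psi^{(n)})^2/n=o(1)$, are not needed for the equality above and follow by combining the no-shrinkage regime treated here with the domination regime of Theorem \ref{Bayesdom1} along subsequences, according to whether $d(\mu^{(n)},\psi^{(n)})^2/n$ stays bounded away from zero or not.
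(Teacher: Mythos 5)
Your proof is correct, but it takes a genuinely different route from the paper's. The paper bounds the Bayes risk by integrating the conditional bounds on the three terms $(a),(b),(c)$ of the CAT(0) loss decomposition \eqref{lossbound} over the prior, splitting on events where $d(\theta^{(n)},\psi^{(n)})^2$ deviates from its mean and reusing Lemma \ref{cheblemma} and the machinery of Theorem \ref{freqdomthm}; the key term $E[(b)]$ is then controlled with a reverse-triangle-inequality argument. You instead bypass the CAT(0) decomposition entirely: the exact geodesic identity $d(X^{(n)},\delta_{JS})=w(X^{(n)})\,d(X^{(n)},\psi^{(n)})\le\sigma$ reduces everything to showing that the total displacement $E\big(d(X^{(n)},\delta_{JS})^2\big)\to 0$, which you get from a single Chebyshev bound on $d(X^{(n)},\psi^{(n)})^2$ about its mean $\mu_n\ge\sigma^2+\tau^2+d(\mu^{(n)},\psi^{(n)})^2$ (the two-level application of \eqref{hadbiasvar}), together with the marginal variance bound $\mathrm{Var}\big(d(X^{(n)},\psi^{(n)})^2\big)=O(\mu_n/n)=o(\mu_n^2)$; a triangle-inequality perturbation then transfers $E\big(d(X^{(n)},\theta^{(n)})^2\big)=\sigma^2$ to $E(L)\to\sigma^2$. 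Your argument is shorter, yields the actual limit rather than only the $\limsup$, and makes transparent that this particular ``no asymptotic shrinkage'' statement needs no curvature hypothesis at all --- only the geodesic structure and the projection inequality behind \eqref{hadbiasvar}. What it gives up is the explicit $O\big(d(\theta^{(n)},\psi^{(n)})^{-2}\big)$-type rate information that the paper's term-by-term bounds produce and reuse in the non-asymptotic domination results. Two small points: your variance computation is the one place where the moment sequences $m,l$ and the product structure really enter, and it is right, but it deserves the care you give it since, as you note, $\mathrm{Var}\big(d(X^{(n)},\psi^{(n)})^2\big)$ genuinely grows with $d(\mu^{(n)},\psi^{(n)})$; and your proof of the in-probability claim, like the paper's, is carried out under the standing hypothesis $d(\mu^{(n)},\psi^{(n)})^2\to\infty$, with the remaining regimes handled (correctly, if tersely) by your closing subsequence dichotomy combined with Theorem \ref{Bayesdom1}.
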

\begin{proof}
See Appendix \ref{appA} for the proof.
\end{proof}

It should be noted that the distributional assumptions in \eqref{hierdistassume} do not constitute a fully Bayesian model since $P_{\theta^{(n)}}^{(n)}$ and the prior distribution $Q^{(n)}$, although constrained, are both left unspecified. By leaving $P_{\theta^{(n)}}^{(n)}$ and $Q^{(n)}$ unspecified the results above can be regarded as part of a robust Bayesian analysis that compares the Bayes risk of $\delta_{JS}$ to $X^{(n)}$ over a wide class of joint distributions for $(X^{(n)},\theta^{(n)})$ \cite{Berger}. A fully Bayesian model can be obtained from \eqref{hierdistassume} if hyper-priors are placed on both $P_{\theta^{(n)}}^{(n)}$ and $Q^{(n)}$. 

\subsection{Bayes Risk for an Adaptively Chosen Shrinkage Point}
\label{Sec5.2}

In scenarios where the distributions of $(X_i^{(n)},\theta_i^{(n)}), \; i = 1,\ldots,n$ are exchangeable it is reasonable to require that an estimator of $\theta^{(n)}$ be equivariant under the permutation of indices. This symmetry consideration suggests that the shrinkage point $\psi^{(n)}$ used in $\delta_{JS}$ should have identical component-wise entries. 

It is intuitively clear that a good choice of $\psi^{(n)}$ should be close to $\theta^{(n)}$ on average.In the proof of Theorem \ref{Bayesdom2}, it was that $
\lim_n E[(a) + (c)] = 0$, for the terms $(a),(c)$ in \eqref{lossbound}. We make the further assumption that for all $n \in \mathbb{N}$,  $Q^{(n)} = \tilde{Q} \times \cdots \times \tilde{Q}$ and $P^{(n)}_{\theta^{(n)}} = \tilde{P}_{\theta_1^{(n)}} \times \cdots \times \tilde{P}_{\theta_n^{(n)}}$. Therefore the joint distribution of $(X_i^{(n)},\theta_i^{(n)})$ is the same for each group. By the definition of $Q^{(n)}$,\; $\mu^{(n)} = (\tilde{\mu},\ldots,\tilde{\mu})$, and if $\psi^{(n)} = (\tilde{\psi},\ldots,\tilde{\psi})$ has identical component-wise entries, this implies
\begin{align}
        \limsup_{n \rightarrow \infty} E\big(R(P_{\theta^{(n)}}^{(n)},\delta_{JS})\big) \leq & \;     \limsup_{n \rightarrow \infty} E\bigg[I_A \frac{d(\theta^{(n)},\psi^{(n)})^2}{d(X^{(n)},\psi^{(n)})^2}\bigg]\sigma^2 = \frac{E\big(d(\theta^{(n)},\psi^{(n)})^2\big)}{E\big(d(X^{(n)},\psi^{(n)}\big)^2)}\sigma^2 \nonumber
        \\
        \leq & \; \frac{E(d\big(\theta^{(n)},\psi^{(n)}\big)^2)}{\sigma^2 + E\big(d(\theta^{(n)},\psi^{(n)}\big)^2)}\sigma^2.
    \label{asymptriskofJS}
\end{align}
The second equality in \eqref{asymptriskofJS} holds since the integrand is uniformly integrable because it is in $L^{1+\epsilon}(\mathbb{R})$ for some $\epsilon > 0$ since $I_A/d(X^{(n)},\psi^{(n)})^2 \leq 1/\sigma^2$. The strong law of large numbers shows that $d(\theta^{(n)},\psi^{(n)})^2 \overset{a.s}{\rightarrow} E\big(d(\theta^{(n)},\psi^{(n)})^2\big)$ and $d(X^{(n)},\psi^{(n)})^2 \overset{a.s}{\rightarrow} E\big(d(X^{(n)},\psi^{(n)})^2\big)$ from which the second equality follows. The last inequality is a result of the Hadamard bias variance inequality \eqref{hadbiasvar} applied to $E\big(d(X^{(n)},\psi^{(n)})^2|\theta^{(n)}\big)$. The upper bound of \eqref{asymptriskofJS} is minimized over $\tilde{\psi}$ when $\tilde{\psi} = \argm_{\tilde{\psi} \in \mathcal{X}} E\big(d(\theta^{(n)},\psi^{(n)}\big)^2) = \argm_{\tilde{\psi} \in \mathcal{X}} E\big(\tilde{d}(\theta_1^{(n)},\tilde{\psi}\big)^2)$. By the definition of $E_2\theta_1^{(n)}$, $\tilde{\psi} = E_2\theta_1^{(n)} = \tilde{\mu}$ is the minimizer of the asymptotic risk upper bound in \eqref{asymptriskofJS}. At this optimal value of $\psi^{(n)}$, the asymptotic Bayes risk of $\delta_{JS}$ is at most $\tau^2/(\sigma^2 + \tau^2)$ percent of the risk of $X^{(n)}$. If either of the inequalities in \eqref{asymptriskofJS} are strict $\delta_{JS}$ may offer an even greater improvement over $X^{(n)}$. 

The preceding discussion makes precise the intuition that $\tilde{\psi}$ should be chosen so that it is close to $\tilde{\mu}$. From the observations $X^{(n)} = (X_1^{(n)},\ldots,X_n^{(n)})$, an estimate of $\tilde{\mu}$ can be obtained by calculating the sample Fr\'echet mean of $X^{(n)}$. The sample Fr\'echet mean, $\widebar{X}^{(n)}$, is the Fr\'echet mean of the empirical distribution of the observations $X_1^{(n)},\ldots,X_n^{(n)}$ so that
\begin{align}
    \widebar{X}^{(n)} \coloneqq \underset{x \in \mathcal{X}}{\argm}\big(\sumonn d(x,X_i^{(n)})^2\big).
\end{align}
In Euclidean space, the sample Fr\'echet mean is simply the sample mean.
Under regularity conditions, 
the sample Fr\'echet mean of an independent and identically distributed sample $\{X_i^{(n)}\}_{i = 1}^n$, converges in $L^2(\mathcal{X})$ to $E_2X_1^{(n)}$ as $n \rightarrow \infty$. Consequently, we propose using the data dependent shrinkage point,  $\tilde{\psi} = \widebar{X}^{(n)}$. It may not, however, be the case that $E_2X_1^{(n)}$ is the asymptotically optimal point $\tilde{\mu}$. The point $\tilde{\mu}$ is defined by $\tilde{\mu} = E_2\theta_1^{(n)} = E_2\big(E_2(X_1^{(n)}|\theta_1^{(n)})\big)$, which is not guaranteed to equal $E_2X_1^{(n)}$ as the tower rule does not always hold in a general Hadamard space (see Appendix \ref{appB}).  

It was shown in Theorem \ref{Bayesdom1} that the dimension needed for $\delta_{JS}$ to outperform $X$, $n^*$, is a function of $m,l$ and $\tilde{d}(\tilde{\mu},\tilde{\psi})$. If $\widebar{X}^{(n)}$ is sufficiently close to $E_2X_1^{(n)}$ then the $n^*$ needed when using this adaptive shrinkage point will approximately be a function of $m,l$ and $\tilde{d}(\tilde{\mu},E_2X_1^{(n)})$. The bias-variance inequality shows that $\tilde{d}(\tilde{\mu},E_2X_1^{(n)})^2 \leq  E\big(\tilde{d}(X_1^{(n)},\tilde{\mu})^2\big)$, while the triangle inequality $\tilde{d}(X_1^{(n)},\tilde{\mu}) \leq \tilde{d}(X_1^{(n)},\theta_1^{(n)}) + \tilde{d}(\theta_1^{(n)},\tilde{\mu})$ can be used to show that  $\tilde{d}(\tilde{\mu},E_2X_1^{(n)})$ can be bounded above entirely in terms of $m$ and $l$. The next theorem makes this reasoning precise and proves the existence of an $n^*(m,l)$ for which the James-Stein estimator with an adaptive shrinkage point has a smaller Bayes risk than $X$.  
\begin{theorem}
\label{adaptthm}
Assume that $X^{(n)} \sim P^{(n)}_{\theta^{(n)}} = \tilde{P}_{\theta_1^{(n)}} \times \cdots \times \tilde{P}_{\theta_n^{(n)}}$ and $\theta^{(n)} \sim Q^{(n)} = \tilde{Q} \times \cdots \times \tilde{Q}$ for all $n \in \mathbb{N}$. If $E\big(\tilde{d}(\widebar{X}^{(n)},E_2X_1^{(n)})^2\big) = O(n^{-1})$ with a multiplicative constant that only depends on $m$ and $l$, then there exists an $n^*(m,l)$ such that for $n \geq n^*$ then $E\big(R(P^{(n)}_{\theta^{(n)}},\delta_{JS})\big) < E\big(R(P^{(n)}_{\theta^{(n)}},X^{(n)})\big)$,  where $\delta_{JS}$ is the adaptive shrinkage estimator given by \eqref{jscomponents} with $\psi_i^{(n)} = \widebar{X}^{(n)}$. Furthermore, the same $n^*$ is valid for any distributions $\tilde{P}_{\theta_i}^{(n)} \in \mathcal{P}^{(1)}_m$ and $\tilde{Q}^{(n)} \in \mathcal{P}^{(1)}_l$.
\end{theorem}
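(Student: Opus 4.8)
The plan is to follow the loss decomposition \eqref{lossbound} that underlies the proofs of Theorems \ref{Bayesdom1} and \ref{Bayesdom2}, now applied with the data-dependent shrinkage point $\psi^{(n)} = (\widebar{X}^{(n)},\ldots,\widebar{X}^{(n)})$, and to show that replacing $\widebar{X}^{(n)}$ by the deterministic point $\mu_0^{(n)} \coloneqq (E_2X_1^{(n)},\ldots,E_2X_1^{(n)})$ costs only a vanishing amount. The first step is to convert the distance hypothesis $d(\mu^{(n)},\psi^{(n)})^2 \leq n/a_n$ appearing in Theorem \ref{Bayesdom1} into a bound depending only on $m$ and $l$. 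By the bias-variance inequality \eqref{hadbiasvar} applied to the marginal distribution of $X_1^{(n)}$, $\tilde{d}(\tilde{\mu},E_2X_1^{(n)})^2 \leq E\big(\tilde{d}(X_1^{(n)},\tilde{\mu})^2\big)$, and the triangle inequality $\tilde{d}(X_1^{(n)},\tilde{\mu}) \leq \tilde{d}(X_1^{(n)},\theta_1^{(n)}) + \tilde{d}(\theta_1^{(n)},\tilde{\mu})$ together with the moment bounds $E\big(\tilde{d}(X_1^{(n)},\theta_1^{(n)})^2\big) \leq m_2$ and $V_2\tilde{Q} \leq l_2$ give $\tilde{d}(\tilde{\mu},E_2X_1^{(n)})^2 \leq 2(m_2 + l_2)$. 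Hence $d(\mu^{(n)},\mu_0^{(n)})^2$ is bounded by a constant depending only on $m$ and $l$, so $\mu_0^{(n)}$ is a legitimate fixed shrinkage point to which the fixed-point analysis applies with $a_n$ of order $n$.

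Next I would establish that the adaptive estimator is asymptotically indistinguishable from the oracle estimator that shrinks toward $\mu_0^{(n)}$. The loss bound \eqref{lossbound} holds verbatim for the random point, and the only quantities that must be controlled are the numerator $d(\theta^{(n)},\widebar{X}^{(n)})^2$ and denominator $d(X^{(n)},\widebar{X}^{(n)})^2$ of the shrinkage weight $w(X^{(n)})$. Writing these as averages $\tfrac1n\sum_i \tilde{d}(\theta_i^{(n)},\widebar{X}^{(n)})^2$ and $\tfrac1n\sum_i \tilde{d}(X_i^{(n)},\widebar{X}^{(n)})^2$ and applying the triangle inequality summand-by-summand, each average differs from its counterpart centred at $E_2X_1^{(n)}$ by a cross term bounded in terms of $\tilde{d}(\widebar{X}^{(n)},E_2X_1^{(n)})$ and the empirical averages themselves; taking expectations, a Cauchy-Schwarz step together with the hypothesis $E\big(\tilde{d}(\widebar{X}^{(n)},E_2X_1^{(n)})^2\big) = O(n^{-1})$ shows these perturbations vanish, at a rate whose constant depends only on $m$ and $l$. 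Since $(X_i^{(n)},\theta_i^{(n)})$ are i.i.d.\ under the product priors, the strong law of large numbers then yields $\tfrac1n\sum_i \tilde{d}(\theta_i^{(n)},E_2X_1^{(n)})^2 \to E\big(\tilde{d}(\theta_1^{(n)},E_2X_1^{(n)})^2\big)$ and $\tfrac1n\sum_i \tilde{d}(X_i^{(n)},E_2X_1^{(n)})^2 \to E\big(\tilde{d}(X_1^{(n)},E_2X_1^{(n)})^2\big)$, exactly as in \eqref{asymptriskofJS}.

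Assembling these pieces, I anticipate $E[(a)] \to 0$ and $E[(c)] \to 0$ as in the fixed-point proofs, while term $(b)$ converges to $\sigma^2\,E\big(\tilde{d}(\theta_1^{(n)},E_2X_1^{(n)})^2\big)\big/E\big(\tilde{d}(X_1^{(n)},E_2X_1^{(n)})^2\big)$, which is strictly less than $\sigma^2 = E\big(R(P_{\theta^{(n)}}^{(n)},X^{(n)})\big)$ by the strict form of the bias-variance inequality \eqref{hadbiasvar} applied to $E\big(\tilde{d}(X_1^{(n)},E_2X_1^{(n)})^2\big)$; strictness follows from the non-degeneracy condition $0 < E\big(d_i(X_i,E_2X_i)^c\big)$ built into $\mathcal{P}_m^{(1)}$. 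Because the reduction of the first step, the perturbation bounds of the second step, and the limiting ratio all involve the underlying distributions only through $m$, $l$ and the assumed $O(n^{-1})$ constant, the threshold $n^*$ is uniform over all $\tilde{P}_{\theta_i}^{(n)} \in \mathcal{P}_m^{(1)}$ and $\tilde{Q}^{(n)} \in \mathcal{P}_l^{(1)}$, as claimed.

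The main obstacle is the dependence between the random shrinkage point $\widebar{X}^{(n)}$ and the data $X^{(n)}$: unlike in Theorems \ref{Bayesdom1} and \ref{Bayesdom2}, where $\psi^{(n)}$ was fixed and could be pulled out of conditional expectations, here $\widebar{X}^{(n)}$ is an implicitly defined $\argm$ function of every coordinate of $X^{(n)}$ and enters both the numerator and denominator of $w(X^{(n)})$. The delicate points are therefore to make the substitution error uniform in the underlying distributions and to keep the denominator $d(X^{(n)},\widebar{X}^{(n)})^2$ bounded away from degeneracy so that the ratio defining term $(b)$ is uniformly integrable and passes to the limit; the non-degeneracy lower bound in $\mathcal{P}_m^{(1)}$ and the uniform bound $I_A/d(X^{(n)},\psi^{(n)})^2 \leq 1/\sigma^2$ already exploited in \eqref{asymptriskofJS} are what make this control possible.
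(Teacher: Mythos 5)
Your proposal reaches the same conclusion as the paper but by a genuinely different mechanism. The paper never re-opens the loss decomposition \eqref{lossbound} for the random shrinkage point: writing $\omega$ for the fixed point $(E_2X_1^{(n)},\ldots,E_2X_1^{(n)})$ and $w_1,w_2$ for the shrinkage weights built from $\widebar{X}^{(n)}$ and $\omega$ respectively, it compares the three estimators $[X,\widebar{X}]_{w_1}$, $[X,\omega]_{w_1}$, $[X,\omega]_{w_2}$ directly in the metric of $\mathcal{X}^{(n)}$ via the triangle inequality, controls the first gap with the Hadamard convexity of pairs of geodesics, $d([x,y]_t,[w,z]_t)\le(1-t)d(x,w)+td(y,z)$, which immediately gives $E\big(d([X,\widebar{X}]_{w_1},[X,\omega]_{w_1})^2\big)\le E\big(\tilde d(\widebar{X}^{(n)},\omega)^2\big)=O(n^{-1})$, controls the second gap by an explicit (and fairly laborious) estimate showing $E\big((w_1-w_2)^2d(X,\omega)^2\big)=O(n^{-1/2})$, and then invokes Theorem \ref{Bayesdom1} as a black box to supply a risk gap $\sigma^2-\alpha$ for the fixed-point estimator that depends only on $m$ and $l$ (using the same bound $\tilde d(\tilde\mu,\omega)^2\lesssim m_2+l_2$ that you derive in your first step). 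Your route instead keeps the decomposition $(a)+(b)+(c)$ with $\psi^{(n)}=\widebar{X}^{(n)}$ and perturbs the numerator $d(\theta^{(n)},\widebar{X}^{(n)})^2$ and denominator $d(X^{(n)},\widebar{X}^{(n)})^2$ of the shrinkage weight separately. This buys you freedom from the $(w_1-w_2)^2$ computation, but at the price of having to pass from $L^1$-closeness of a numerator and a denominator, each depending on $\widebar{X}^{(n)}$, to control of the \emph{expected ratio} in term $(b)$ and of $P(A^c)$ in term $(c)$; that requires redoing the truncation-plus-concentration arguments of Theorems \ref{freqasmypthm} and \ref{Bayesdom2} with a data-dependent centre (Lemma \ref{cheblemma} no longer applies directly to $d(X^{(n)},\widebar{X}^{(n)})^2$, whose summands are not independent), and it is precisely this step that you only gesture at. The ingredients you name --- the bound $I_A/d(X^{(n)},\psi^{(n)})^2\le 1/\sigma^2$, the strict bias--variance inequality giving $E\big(\tilde d(X_1^{(n)},\omega)^2\big)\ge\sigma^2+E\big(\tilde d(\theta_1^{(n)},\omega)^2\big)$, and quantitative rates in $m,l$ --- are the right ones, so the plan can be completed, but the paper's estimator-level comparison avoids the ratio entirely and is the more modular argument.
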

\begin{proof}
See Appendix \ref{appA} for the proof.
\end{proof}
This result demonstrates that by choosing the shrinkage point adaptively there is no longer any concern that  $d(\mu^{(n)},\psi^{(n)})^2$ grows at too fast a rate. The shrinkage point $\widebar{X}^{(n)}$ is on average close enough to $\tilde{\mu}$ so that it is beneficial to shrink $X^{(n)}$ towards $\widebar{X}^{(n)}$. Fixing the conditional distribution $P_{\theta^{(n)}}^{(n)}$, Theorem \ref{adaptthm} shows that $\delta_{JS}$ has a strictly smaller $\mathcal{P}_l^{(n)}$-Bayes risk, $\sup_{Q^{(n)} \in \mathcal{P}^{(n)}_l}E\big(R(P_{\theta^{(n)}}^{(n)},\delta_{JS})\big)$, than $X^{(n)}$ for $n \geq n^*$ \cite{Berger}. 

The condition $E\big(\tilde{d}(\widebar{X}^{(n)},E_2X_1^{(n)})^2\big) = O(n^{-1})$ in Theorem \ref{adaptthm} is not overly restrictive. For example, if $\mathcal{X}$ is a Hilbert space then ${E\big(\tilde{d}(\widebar{X}^{(n)},E_2X_1^{(n)})^2\big) = (\sigma^2 + \tau^2)/n}$. More generally, it is shown in \cite{Schotz} that if $\mathcal{X}$ satisfies the entropy condition 
\\
\noindent $\sqrt{\log\big(N(B_{\alpha}(\mu),r)\big)} \leq cr^t/\alpha^s$ for any $\alpha,r > 0$ and fixed numbers $c,t,s \in \mathbb{R}^+$ with $t =s < 1$ then the desired condition holds with a multiplicative constant that only depends on $m$ and $l$. The number $N(B_{\alpha}(\mu),r)$ is defined as the covering number of the ball of radius $\alpha$ centered at $\mu$ by balls of radius $r$. Many spaces of interest, such as the metric tree space with vertex degrees that are bounded above and edge lengths that are bounded below, will satisfy this covering number condition. In fact, it is not fully necessary that $E\big(\tilde{d}(\widebar{X}^{(n)},E_2X_1^{(n)})^2\big)$ be $O(n^{-1})$ for the conclusion of Theorem \ref{adaptthm} hold; all that is needed is $E\big(\tilde{d}(\widebar{X}^{(n)},E_2X_1^{(n)})^2\big) = o(1)$. However, in such a case the $n^*(m,l)$ needed will also depend on the rate of convergence of $E\big(\tilde{d}(\widebar{X}^{(n)},E_2X_1^{(n)})^2\big)$ to zero.

\subsection{Asymptotic Optimality of $\delta_{JS}$}
\label{Sec5.3}
As mentioned, it is too much to expect that $\delta_{JS}$ asymptotically attain the optimal Bayes risk for a given sampling model, as a Bayes estimator may not take the form of a shrinkage estimator. The asymptotic Bayes risk of $\delta_{JS}$ can instead be compared against the risk of the best possible shrinkage estimator. We define the minimum shrinkage Bayes risk of the model in \ref{Sec5.2} as 
\begin{align*}
    \underset{\tilde{\psi} \in \mathcal{X},\; t \in [0,1]}{\inf} E\big( d([X^{(n)},\psi^{(n)}]_t,\theta^{(n)})^2\big).
\end{align*}
The same derivation used in \eqref{optimalshrinkwt} shows that for a given $\psi = (\tilde{\psi},\ldots,\tilde{\psi)}$ the shrinkage weight that minimizes the CAT(0) upper bound is 
\begin{align}
    \tilde{t} = \frac{\sigma^2 + \rho(X^{(n)},\psi^{(n)})^2 - \rho(\theta^{(n)},\psi^{(n)})^2}{2\rho(X^{(n)},\psi^{(n)})^2}.
    \label{bayesoptimalweight}
\end{align}
As the James-Stein shrinkage weight $w(X)$ converges to  $\sigma^2/\rho(X^{(n)},\psi^{(n)})^2$ almost surely, $\delta_{JS}$ only minimizes the CAT(0) bound asymptotically if $\rho(X^{(n)},\psi^{(n)})^2 - \rho(\theta^{(n)},\psi^{(n)})^2 = \sigma^2$. If $\mathcal{X}$ has negative curvature it is typical that $\rho(X^{(n)},\psi^{(n)})^2 - \rho(\theta^{(n)},\psi^{(n)})^2 > \sigma^2$ so that $\delta_{JS}$ asymptotically performs less shrinkage than is needed to minimize the CAT(0) bound. 

Determining the minimizer of the CAT(0) bound with respect to $\psi$ is more complex. If the above value of $\tilde{t}$ is substituted into the CAT(0) bound, then the resulting expression is
\begin{align*}
    \tilde{t}\sigma^2 + (1-\tilde{t})\rho(\theta^{(n)},\psi^{(n)})^2 - & \tilde{t}(1-\tilde{t})\rho(X^{(n)},\psi^{(n)})^2 = 
    \\
    & \rho(\theta^{(n)},\psi^{(n)})^2 -
     \frac{\big(\rho(\theta^{(n)},\psi^{(n)})^2 + \rho(X^{(n)},\psi^{(n)})^2 -
   \sigma^2\big)^2}{4\rho(X^{(n)},\psi^{(n)})^2}.
\end{align*}
The above expression can also be simplified in the special case when $\rho(X^{(n)},\psi^{(n)})^2 = \rho(\theta^{(n)},\psi^{(n)})^2 + \sigma^2$, where it equals $\sigma^2\rho(\theta^{(n)},\psi^{(n)})^2/(\sigma^2+ \rho(\theta^{(n)},\psi^{(n)})^2)$. In this case it is seen that the optimal choice of $\psi^{(n)}$ is $E_2\theta^{(n)}$ as this minimizes $\rho(\theta^{(n)},\psi^{(n)})^2$. The condition $\rho(X^{(n)},\psi^{(n)})^2 = \rho(\theta^{(n)},\psi^{(n)})^2 + \sigma^2$ is satisfied in any Hilbert space, as this is just the bias-variance decomposition. Furthermore, the CAT(0) bound holds with equality in a Hilbert space so the shrinkage estimator minimizing the Bayes risk is the familiar estimator, $[X^{(n)},E_2\theta^{(n)}]_{\sigma^2/(\sigma^2 + \tau^2)}$. The tower rule also holds in a Hilbert space so $\widebar{X}^{(n)} \rightarrow E_2\theta_1^{(n)}$ in $L^2(\mathcal{X})$. The bound in \eqref{asymptriskofJS} thus shows that $\delta_{JS}$ attains the minimum Bayes shrinkage risk asymptotically in a Hilbert space. For example, in the location family example in $\mathbb{R}^n$, the Bayes risk of the adaptive James-Stein estimator approaches the minimum Bayes risk out of all linear estimators of $\theta^{(n)}$ as $n \rightarrow \infty$.  

Without any additional assumptions on the metric in a Hadamard space with negative Alexandrov curvature, not much can be said about the asymptotic optimality of $\delta_{JS}$. The CAT(0) upper bound may not fully reflect the behaviour of the risk function in such a space.

\section{Numerical Results}
\label{Sec6}
In this section two simulation studies are presented that demonstrate situations in which the performance of the geodesic James-Stein estimator improves considerably over that of the estimator $X$. For the scenarios considered here the $n^*$ needed for $\delta_{JS}$ to have a lower Bayes risk than $X$ appears to be small, so that only a few groups are needed for the geodesic James-Stein estimator to be effective. 
\subsection{Log-Euclidean Metric on Positive-Definite Matrices}
One popular choice of a metric on the space of $k \times k$ symmetric positive-definite matrices $SPD(k)$ is the log-Euclidean metric defined by $d(A,B) = \Vert \log(A) - \log(B)\Vert_F$ where $\log(\cdot)$ is the matrix logarithm and $\Vert \cdot \Vert_F$ is the Frobenius norm. If $A$ has the eigendecomposition $A = U\Lambda U^T$ then $\log(A) = U\log(\Lambda)U^T$ where $\log(\Lambda) = \text{diag}(\log(\lambda_{ii}))$. The log-Euclidean metric is used extensively in diffusion tensor imaging in part because of its ease of computation and invariance properties \cite{Pennec}. Under the log-Euclidean metric, $SPD(k)$ is a Hilbert space and therefore also a Hadamard space. To see this, first note that the matrix logarithm is a bijection from $SPD(k)$ onto the space of symmetric $k \times k$ matrices, $S(k)$. As $S(k)$ is a subspace of the vector space of all $k \times k$ matrices with the Frobenius norm, $(SPD(k),d)$ is isometric to this $k(k+1)/2$ dimensional Hilbert space. Consequently, the log of the sample Fr\'echet mean of a collection of matrices $X_1,\ldots,X_n$ under the log-Euclidean metric is just the arithmetic mean of the log-transformed matrices, namely $\frac{1}{n}\sumonn \log(X_i)$. Converting back to the original coordinates shows that $\widebar{X} = \exp\big(\frac{1}{n}\sumonn \log(X_i)\big)$ where $\exp(\cdot)$ is the matrix exponential. Likewise, the Fr\'echet mean of a random matrix $X$ is $\exp\big(E(\log(X))\big)$ where $E(\log(X))$ is the standard expectation on $S(k)$.

To test the frequentist performance of variants of the geodesic James-Stein estimator we consider the case where $X_i = W_i/k$ with $W_i \sim \text{Wishart}_k(\Psi_i,k), \; i = 1,\ldots,n$ where $k = 3$ and the $\Psi_i$'s vary over the space $SPD(k)$. We are interested in simultaneously estimating $\theta_i = E_2(X_i|\Psi_i)$ for each $i$ from the $X_i$ observations using the geodesic James-Stein estimator. Importantly, the Fr\'echet mean of $X_i|\Psi_i$ is not equal to its Euclidean mean $\Psi_i$. Rather, $\theta_i$ is a non-linear function of $\Psi_i$. The eigenvalues of $\theta_i$ will typically be smaller than that of $\Psi_i$ when $\Psi_i$ is close to diagonal. Heuristically, this is a consequence of Jensen's inequality, for if $Z$ is assumed to be a diagonal random matrix then $\sum_i\exp\big(E(\log(z_{ii}))\big)^2 = \Vert E_2Z \Vert^2_F \leq  \Vert EZ \Vert^2_F = \sum_i E(z_{ii})^2$.  Although the geometry of $(SPD(k),d)$ is easily understood as a Hilbert space, the matrix logarithm transforms matrices in a non-linear manner. The resulting distribution of $\log(X_i)$ for the above Wishart model is decidedly non-Gaussian on the Hilbert space of symmetric matrices, and so the classical theory of James-Stein estimation does not apply in this setting.

Monte Carlo estimation is used to compute the value of the frequentist risk functions, $R(\Psi,X)$ and $R(\Psi,\delta_{JS})$, at a fixed value of $\Psi \coloneqq (\Psi_1,\ldots,\Psi_n)$. As a means of exploring the behaviour of the James-Stein risk function for various choices of $\Psi$ we draw each of the $\Psi_i$'s independently from the diffuse distribution $\Psi_i = W_i/k$ where $W_i \overset{i.i.d}{\sim} \text{Wishart}_k(I,k)$. This is done $100$ times so that the risks of $\delta_{JS}$ and $X$ are evaluated at $100$ different values of $\Psi$. As a distribution over $\Psi$ is involved, this analysis only explores the frequentist risk over the region of $\Psi$'s that occur with medium to high probability. Figure \ref{fig3} shows the proportion the $\Psi$ values where the risk of $\delta_{JS}$ is lower than $X$. Three different choices of the shrinkage point, $\psi = 10I, 100I$ and $\widebar{X}$, are used in $\delta_{JS}$. Figure \ref{fig3} illustrates that as $n$ increases, $\delta_{JS}$ outperforms $X$ for every value of $\Psi$. Under the distribution placed on $\Psi_i$, $EX_i = I$. As the log-Euclidean mean tends to produce matrices with smaller eigenvalues than the Euclidean mean, $\delta_{JS}$ will have better performance for shrinkage points $aI$ that have $a \leq 1$. Consequently, around $n \approx 40$ groups are required in order for $\delta_{JS}$ with $\psi = 100I$ to have a smaller risk than $X$ for every value of $\Psi$ drawn from the diffuse distribution. When $\psi = 10I$ and $\psi = \widebar{X}$, only around $n \approx 15$ and $n \approx 10$ groups are needed respectively. A fewer number of groups are needed since the shrinkage points $\psi = 10I,\widebar{X}$ are on average closer to $E_2X_i$ than $\psi = 100I$ is, and therefore are closer to the $\theta_i$'s on average. 

 \begin{figure}[h]
    \centering
    \begin{subfigure}[t]{.32\textwidth}
    \centering
  \includegraphics[width = \textwidth]{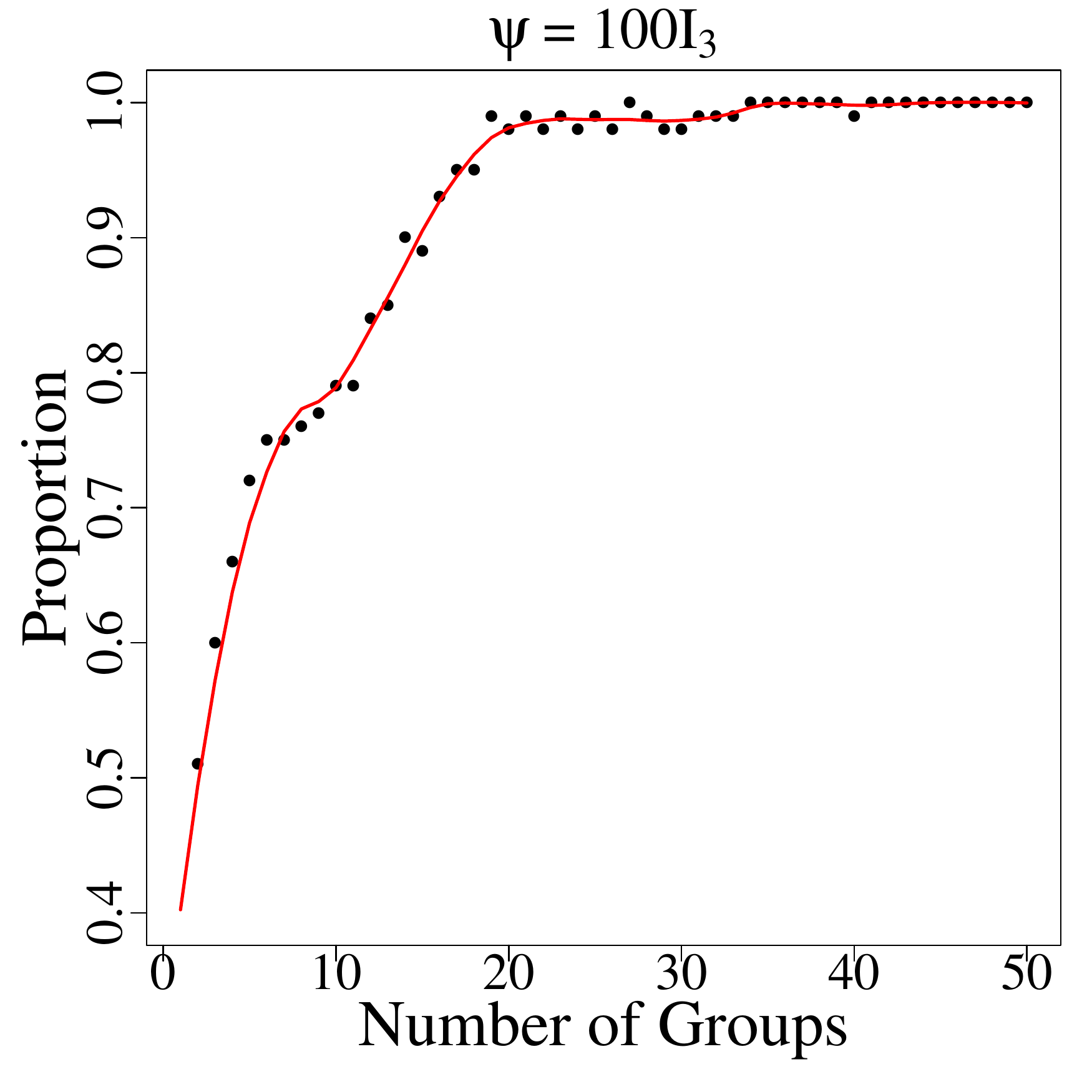}
\end{subfigure}
\begin{subfigure}[t]{.32\textwidth}
\centering        
\includegraphics[width = \textwidth]{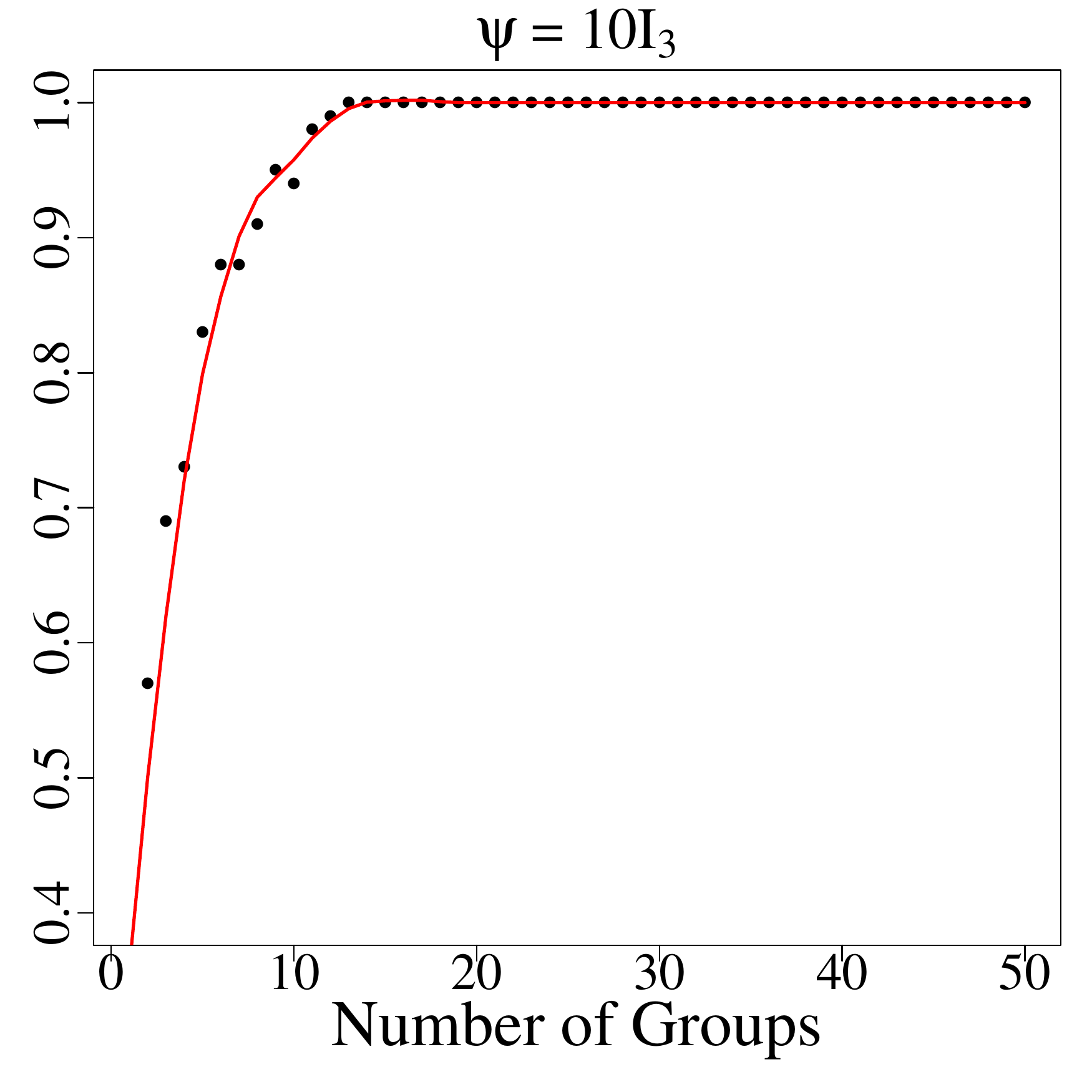}
\end{subfigure}
\begin{subfigure}[t]{.32\textwidth}
\centering        
\includegraphics[width = \textwidth]{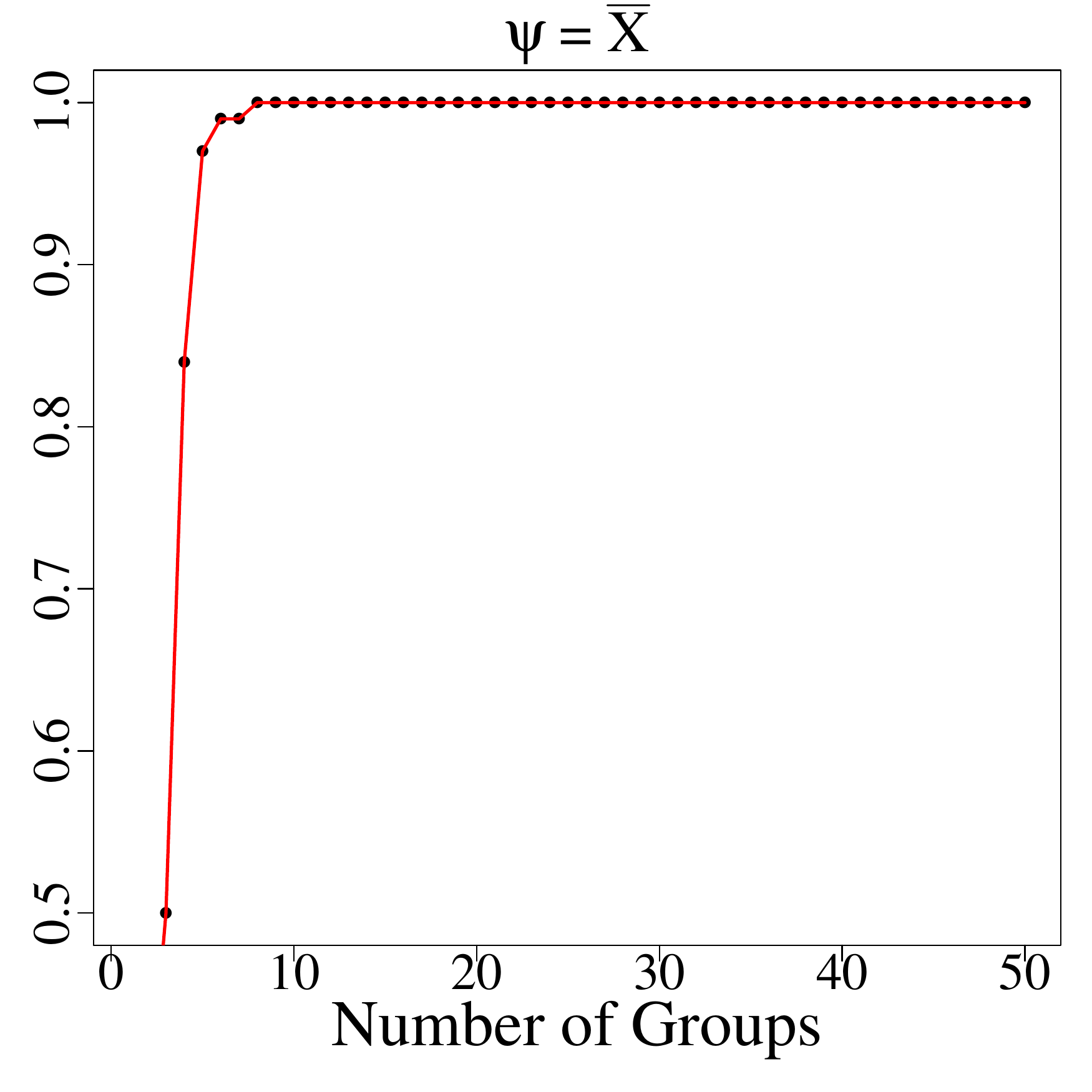}
\end{subfigure}
\caption{Proportion of $\Psi$'s where $\delta_{JS}$ has smaller risk than $X$ }
\label{fig3}
\end{figure}

The Bayes risk of $\delta_{JS}$ is also computed via Monte Carlo estimation for the following hierarchical model
\begin{align}
    (k + \alpha)X_i|\Psi_i \sim  \text{Wishart}_k(\Psi_i,k + \alpha), \;\; k\Psi_i \overset{iid}{\sim} \text{Wishart}_k(I,k), \;\; i = 1,\ldots,n
    \label{wishmodel}
\end{align} 
where $k = 3$ and $\alpha = (0,2,8)$. The added $\alpha$ parameter represents the concentration of $X_i$ about $\Psi_i$, with higher values of $\alpha$ corresponding to a smaller Fr\'echet variance of $X_i|\Psi_i$. In addition to the basic choices, $\psi = .1I,I,10I,100I,\Bar{X}$, of the shrinkage point in $\delta_{JS}$, the Bayes risk is also computed for two other variants of $\delta_{JS}$. The first variant uses the optimal shrinkage point which by the results in Section \ref{Sec5.3} is $\mu \coloneqq E_2\theta$. The second variant is the best shrinkage estimator that uses the same optimal shrinkage point $\mu$ but also uses the fixed, optimal shrinkage weight given by \eqref{bayesoptimalweight}. Note that both the Bayes risk of $X$ and the Bayes risk of the best shrinkage estimator do not depend on $n$.

 \begin{figure}[h]
    \centering
    \begin{subfigure}[t]{.32\textwidth}
    \centering
  \includegraphics[width = \textwidth]{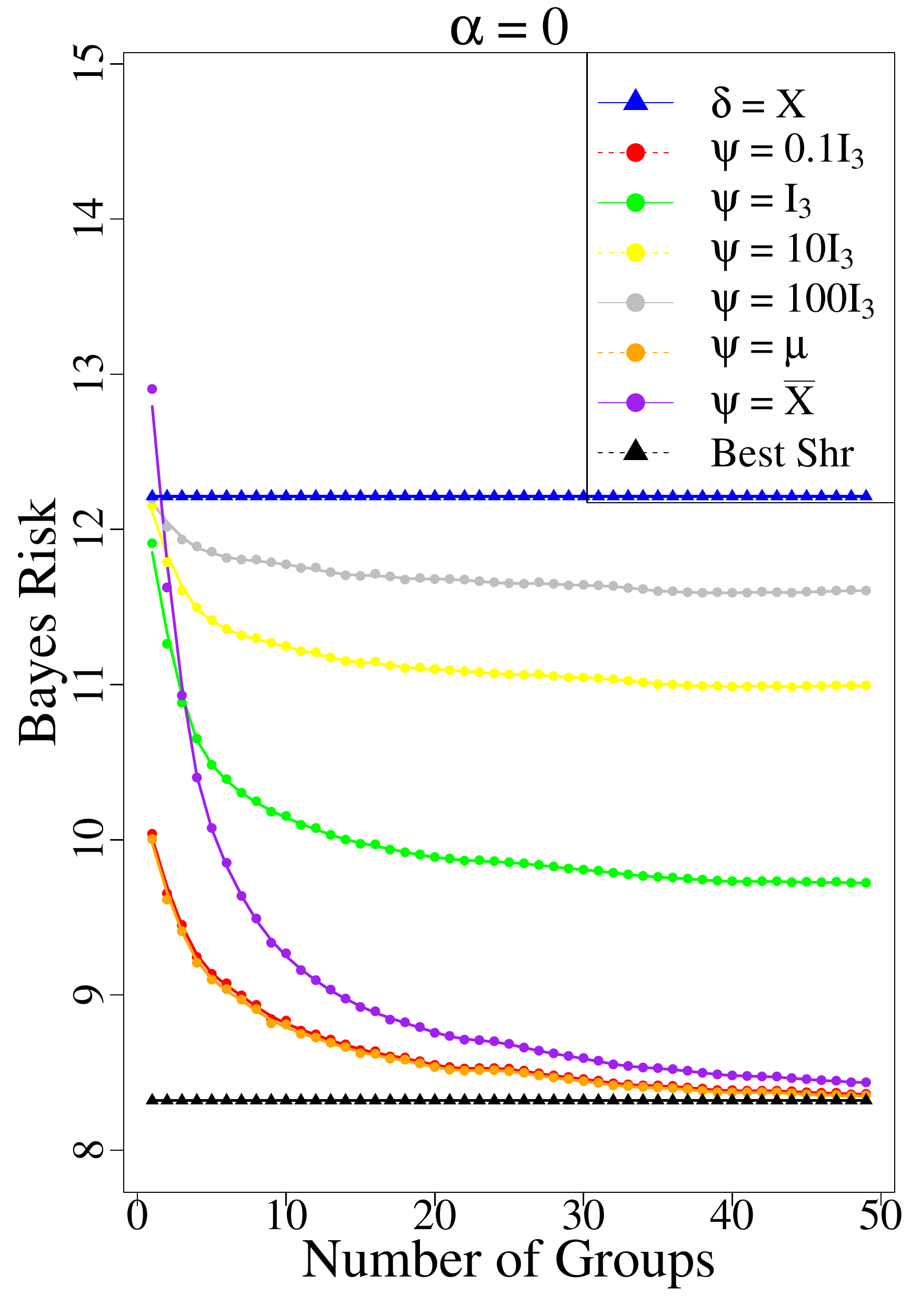}
\end{subfigure}
\begin{subfigure}[t]{.32\textwidth}
\centering        
\includegraphics[width = \textwidth]{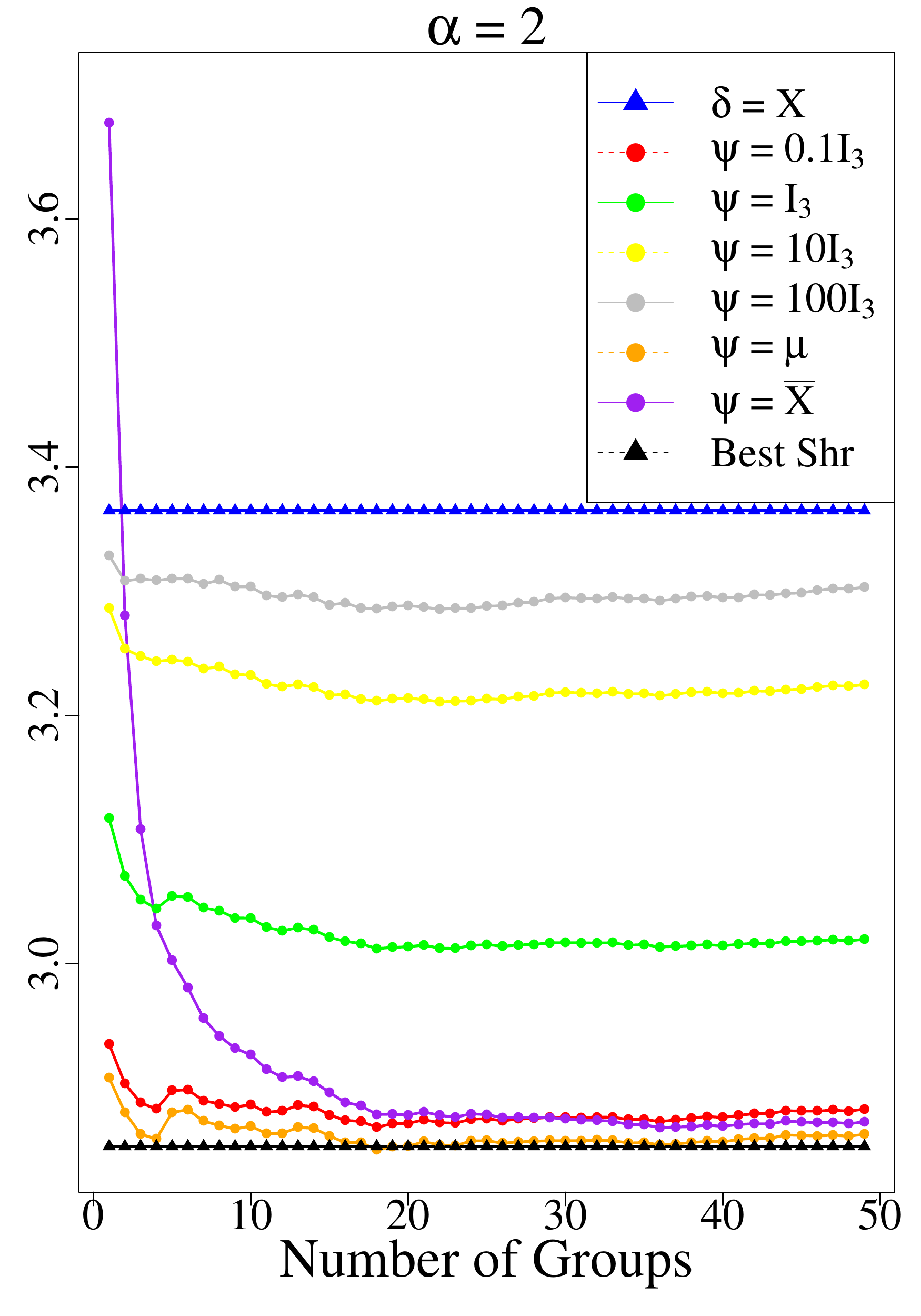}
\end{subfigure}
\begin{subfigure}[t]{.32\textwidth}
\centering        
\includegraphics[width = \textwidth]{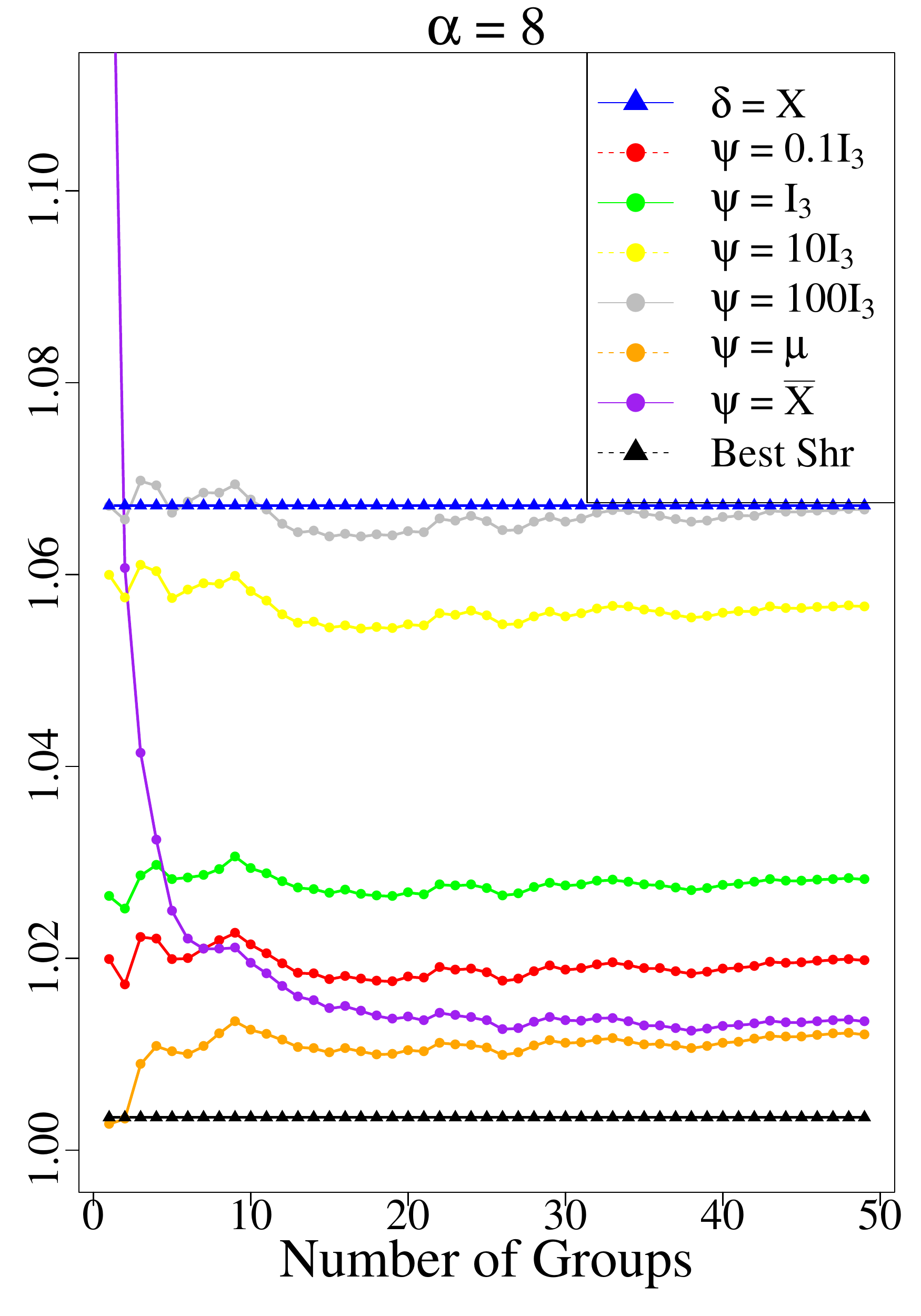}
\end{subfigure}
\caption{Bayes Risk of $\delta_{JS}$ as a function of $n$.}
\label{fig2}
\end{figure}
 
Figure \ref{fig2} illustrates the Bayes risk of the James-Stein estimator as a function of $n$ for various choices of the shrinkage point. It is seen that for shrinkage points that are fixed matrices only a small group size, $n \approx 3$, is needed for $\delta_{JS}$ to have smaller Bayes risk than $X$. The James-Stein estimator with the data-dependent $\psi = \widebar{X}$ performs well, even for a modest number of groups. Its Bayes risk is $75\%, 87\%$ and $95\%$ of the Bayes risk of $X$ for $\alpha = (0,2,8)$ respectively and $n = 10$. Asymptotically, this percentage improvement depends on the ratio of the within-group to the between-group Fr\'echet variance as seen in \eqref{asymptriskofJS}. In addition, its Bayes risk approaches the minimum shrinkage risk, as expected from the discussion Section \ref{Sec5.3}. The estimator that uses the $\psi = \mu$, as its shrinkage point also has a Bayes risk converging to the minimum shrinkage risk. This estimator outperforms the adaptive James-Stein estimator since the optimal shrinkage point is given, unlike in the adaptive James-Stein estimator where $\mu$ has to be estimated by $\bar{X}$.

\subsection{Metric Tree Spaces}
The weighted graph of a tree has the geometry of a Hadamard space under the shortest path metric. As long as the tree has a vertex with degree greater than two the resulting metric tree space has negative Alexandrov curvature. The theoretical results of previous sections  suggest that this negative curvature makes the geodesic James-Stein estimator particularly effective as $\delta_{JS}$ will tend to undershrink relative to the optimal amount of shrinkage. This is corroborated by the numerical results of this section. 

Consider the graph of a tree $\mathcal{T} = (\mathcal{V},\mathcal{E})$ that has an associated weight function, $w:\mathcal{E} \rightarrow \mathbb{R}^+$, on its edges. Draw this tree in $\mathbb{R}^2$ so that every edge $e$ is a straight line with length $w(e)$. The metric tree space $(\mathcal{T}_w,d)$ is the set of points in this drawing. Distances between points in $\mathcal{T}_w$ are given by the shortest paths within the drawing. For example, the distance between vertices is the shortest weighted path between them. More formally, let $\mathbb{R}_e \coloneqq ([0,w(e)],e) \subset \mathbb{R} \times \mathcal{E}$ be intervals in $\mathbb{R}$ tagged by $e \in \mathcal{E}$ and take $\pi_1,\pi_2: \mathcal{E} \rightarrow \mathcal{V}$ to be maps that identify  $\pi_1(e)$ and $\pi_2(e)$ with the two vertices associated with $e$ in an arbitrary order. The metric tree space is the quotient metric space $ \big(\coprod_{e \in \mathcal{E}}  \mathbb{R}_e \coprod \mathcal{V}\big)/ \sim$ where the equivalence relation identifies $(0,e) \in \mathbb{R}_e$ with $\pi_1(e) \in \mathcal{V}$ and $(w(e),e) \in \mathbb{R}_e$ with  $\pi_2(e) \in \mathcal{V}$ \cite{Bridson}. 
Each $\mathbb{R}_e$ in the quotient is equipped with the Euclidean metric. 

To see that the CAT(0) inequality holds in $(\mathcal{T}_w,d)$, if three points $x,y,z$ all lie on the same geodesic so that without loss of generality $z \in [x,y]$ then as $[x,y]$ is isometric to a Euclidean interval the CAT(0) inequality is satisfied. If $x,y,z$ do not all lie on the same geodesic then the comparison triangle looks like the tripod in Figure \ref{tripod} up to  differences in edge lengths from the central vertex. It is visually apparent that this triangle is skinnier than the corresponding Euclidean triangle so the CAT(0) inequality is satisfied.  

The simulations in this section will be performed on the metric tree $\mathcal{X}$ that has countably many vertices, each having degree $3$, where all edges in $\mathcal{X}$ have length one. Suppose that a particle in $\mathcal{X}$ starts at some vertex $\theta_i$, and jumps to adjacent vertices according to a simple symmetric random walk that is run for $k_{\sigma^2}$ iterations. That is, for each step of the random walk, the particle has a $1/4$ probability of moving to one of the $3$ adjacent vertices and a $1/4$ probability of not moving. After observing the positions, $X_i, \; i = 1,\ldots,n$, that $n$ different particles end up at we are tasked with simultaneously estimating the starting position of each particle under squared distance loss. By symmetry considerations, $\theta_i$ is the Fr\'echet mean of $X_i$ in the metric tree $\mathcal{X}$. It is further assumed that $k_{\sigma^2}$ is known, so that the Fr\'echet variance of $X_i$ can explicitly be calculated. A prior distribution is placed on the Fr\'echet means $\theta_i$ so that $(\theta_1,\ldots,\theta_n)$ has the distribution that results from running $n$ independent symmetric random walks each starting at $\mu$ for $k_{\tau^2}$ steps.

The Bayes risk of the geodesic James-Stein estimator is computed by averaging the values of $L(\theta,\delta_{JS}(X))$ over independent samples of $(X,\theta)$ from the distribution described above. Table \ref{metrictreetab} provides the ratio of risks of the James-Stein estimator to the Fr\'echet variance for various values of the shrinkage point and values of $k_{\sigma^2}/k_{\tau^2}$, which is a proxy for the ratio of the within group variance to the between group variance. The value of $k_{\tau^2}$ is fixed at $15$ throughout, while the value of $k_{\sigma^2}$ ranges from $1$ to $30$. A gradient based algorithm, detailed in Appendix \ref{appC},
is used to compute the sample Fr\'echet mean $\widebar{X}$ used in the data-dependent shrinkage estimator. Symmetry considerations show that the oracle shrinkage estimator that minimizes the Bayes risk is $[X,\mu]_{\tilde{t}}$ where $\tilde{t}$ is given by \eqref{bayesoptimalweight}.

The results in Table \ref{metrictreetab} are striking in that only two groups are needed for $\delta_{JS}$ to have a noticeably lower Bayes risk than $X$. For example, when $k_{\sigma^2} = k_{\tau^2} = 15$ the Bayes risk of the adaptive shrinkage estimator is less than half of that of $X$.  Even when the shrinkage point is chosen very poorly so that $d(\psi,\mu) = 32$, the geodesic James-Stein estimator still outperforms $X$. As $k_{\tau^2} = 15$, every possible value of $\theta$ must have $d(\theta,\mu) \leq 15$, so a shrinkage point with $d(\psi,\mu) = 32$ is not even a possible value of any of the $\theta_i$'s. Unlike the log-Euclidean example, there is a sizeable gap between the performance of the oracle shrinkage estimator and the data dependent shrinkage estimator for a modest number of groups. For various choices of $k_i$, the minimum shrinkage risk ranges from $70\%$ to $50\%$ of the adaptive shrinkage risk when $n = 50$. This gap is explained by the fact that the bias-variance inequality $\rho(X,\psi)^2 > \sigma^2 + d(\theta,\psi)^2$ is a strict inequality due to the negative curvature of the space. The shrinkage weight $1 \wedge \sigma^2/\rho(X,\psi)^2$ in $\delta_{JS}$ tends to undershrink relative to the optimal shrinkage estimator.  

\begin{table}
\centering
\begin{tabular}[t]{|l||c|c|c|c|c|c|c|c|}
\hline
    \multirow{2}{*}{$k_{\sigma^2}/k_{\tau^2}$} & \multicolumn{8}{|c|}{Value of $d(\psi,\mu)$} \\
    \cline{2-9}
    & 0 & 1 & 4 & 8 & 16 & 32 & $\psi = \widebar{X}$ & Oracle
    \\
\hline
\hline
1/15 & 0.750 & 0.766 & 0.841 & 0.884 & 0.930 & 0.964 & 0.736 & 0.558\\
\hline
1/3 & 0.569 & 0.592 & 0.656 & 0.728 & 0.821 & 0.877 & 0.624 & 0.305\\
\hline
2/3 & 0.461 & 0.463 & 0.545 & 0.607 & 0.717 & 0.825 & 0.526 & 0.200\\
\hline
1 & 0.373 & 0.381 & 0.472 & 0.538 & 0.646 & 0.766 & 0.445 & 0.160\\
\hline
4/3 & 0.323 & 0.335 & 0.400 & 0.463 & 0.601 & 0.730 & 0.395 & 0.116\\
\hline
5/3 & 0.279 & 0.298 & 0.366 & 0.434 & 0.557 & 0.689 & 0.334 & 0.084\\
\hline
2 & 0.242 & 0.258 & 0.320 & 0.386 & 0.494 & 0.647 & 0.298 & 0.072\\
\hline
\end{tabular}
\caption{Values of $E\big(R(P_\theta,\delta_{JS})\big)/\sigma^2$ for $n = 2$ groups.}
\label{metrictreetab}
\end{table}

The frequentist domination result of Corollary \ref{boundeddiamcor} is applicable here for fixed $k_{\sigma^2},k_{\tau^2}$ if it assumed that the possible starting points, $\theta_i$ of each $X_i$ particle all lie in a bounded set of $\mathcal{T}$. The asymptotic domination result of Theorem \ref{freqasmypthm} applies here without any restrictions on the $\theta_i$'s. Like the classical Gaussian James-Stein result, these results are somewhat counterintuitive. It would appear like the best estimate of the starting positions of several particles that move symmetrically and independently would be the positions where they end up at, $X$. Theorem \ref{freqasmypthm} shows that asymptotically it is possible to do better by using $\delta_{JS}$ even though no relationship is assumed between any of the particles.  

\section{Discussion}
In this article we have primarily considered the risk properties of the geodesic James-Stein estimator for multiple Fr\'echet means. The primary result of this work, Theorem \ref{freqdomthm}, shows that under mild conditions the geodesic James-Stein estimator outperforms $X$ in a simultaneous Fr\'echet mean estimation problem if there are enough groups present and the shrinkage point is reasonably chosen. It is the non-positive Alexandrov curvature of the metric space that forms the foundation of this result, as it implies that the squared distance function is metrically convex.

One may wonder if the results of this article can be extended to arbitrary metric spaces. In general the answer is no. To see this consider the sphere $\mathbb{S}^2 \subset \mathbb{R}^{3}$ with its intrinsic, angular metric. The squared distance metric on the sphere is not metrically convex due to its positive sectional, and thus Alexandrov, curvature. For example, any two points $x,y$ that lie on the equator of the sphere have $d([x,y]_t,N) = d(x,N) = d(y,N)$ for all $t \in [0,1]$ where $N$ is the north pole. As a result, no point of the geodesic $[x,y]$ is closer to $N$ than $x$ itself. A more extreme example on $\mathbb{S}^1$ is presented in Appendix \ref{appB} where for a certain $\psi$ and distribution of $X$, $[X,\psi]_t$ is has a large risk than $X$ for all $t > 0$. As $\mathbb{S}^1$ is compact, Corollary \ref{boundeddiamcor} fails to hold in a general metric space. Shrinkage may still be beneficial under specific circumstances. In the case of a Riemannian manifold, if a distribution is concentrated in a small enough region of the manifold, the effect of curvature on the metric will not be pronounced and results from the Euclidean case will approximately apply. If reliable prior information, suggesting that $E_2X$ is close to $\psi$, is available then the shrinkage estimator $[X,\psi]_t$ will likely have reasonable performance even if the metric space has positive Alexandrov curvature.

Another extension of the geodesic James-Stein estimator presented here would be to cases where $\sigma^2$ is unknown and a plug-in estimator is used for $\sigma^2$ in the expression for the geodesic James-Stein estimator. The theoretical properties of such an estimator are more complex  because multiple observations per group are required to obtain an estimate of $\sigma^2$. A property like the Hadamard bias-variance inequality will no longer be applicable since the sample Fr\'echet means of i.i.d observations may not be unbiased for the underlying Fr\'echet mean. Results from \cite{Gutmann1, Gutmann2} further show that there is no Stein paradox for a family of distributions with finite support. More specifically,  admissible estimators for individual decision problems remain admissible when combined into an estimator for the joint decision problem whose loss function is the sum of the losses for the individual problems. For example, if $X_i \sim Bin(n_i,\theta_i)$ then $(X_1,\ldots,X_n)$ is admissible for estimating $(\theta_1,\ldots,\theta_n)$ under squared error loss because $X_i$ is admissible for estimating $\theta_i$. This shows that Corollary \ref{boundeddiamcor} will not hold in general if $\sigma^2$ is unknown, since the estimator $X$ is admissible in this binomial example. We again remark that $\sigma^2$ does not have to be known exactly in order to use $\delta_{JS}$. Rather, all that is needed is a non-zero lower bound on $\sigma^2$ from which this lower bound can be used in place of $\sigma^2$ in \eqref{jscomponents}. All the theoretical results in in Sections \ref{Sec4} and \ref{Sec5} will apply to the James-Stein estimator that uses such a lower bound, as shown by \eqref{lowervarbdrisk}.

The hierarchical model introduced in Section \ref{Sec4} of this article represents one of the most basic Fr\'echet mean and variance structures possible on metric space valued data. Recent work on Fr\'echet regression \cite{MullerRegression} and geodesic regression \cite{Fletcher} provide examples of reasonable Fr\'echet mean functions of a Euclidean covariate for metric space valued data. In these works the mean functions depend on more general covariates in $\mathbb{R}^k$, rather than just indicator functions of group membership. Another area of recent interest is modelling the joint distributions of random objects on metric spaces. The Bayesian hierarchical model of Section \ref{Sec5} provides a basic example of this, for if multiple observations were obtained within each group, then observations within the same group are more ``correlated" with each other than observations in different groups.  Various notions of covariance on metric spaces have been proposed in \cite{Lyons, MullerCov, Rizzo}. There is substantial scope for the development of parametric and non-parametric models that incorporate these notions of covariance and permit tractable inference. The geodesic James-Stein estimator solves the simple weighted Fr\'echet mean problem, $\delta_{JS,i} = \argm_{z \in \mathcal{X}} \big(1-w(X)\big) d(X_i,z)^2 + w(X) d(\psi,z)^2$. It is anticipated that a typical inferential procedure for estimating the Fr\'echet means of correlated metric space data will result in solving similar weighted sample Fr\'echet mean problems.

\begin{appendices}
\section{Proofs}\label{appA}

\begin{customlemma}{1}
\label{cheblemma}
For $X \in \mathcal{P}_m$ we have $  E\big((d(X,\psi)^2 - \rho(X,\psi)^2)^{2k}\big) \leq  C_k$ where $C_k = O\big(d(\theta,\psi)^{2k}\big)$ and in the case of $k = 1$, $C_1 = O\big(d(\theta,\psi)^2/n\big)$. 

\noindent In particular,  $P\big(\vert d(X,\psi)^2 - \rho(X,\psi)^2)\vert > t\big) \leq C_k/t^{2k}.$
\end{customlemma}
\begin{proof}
 For $k = 1$, we get
\begin{align*}
    E\big[\big(d(X,\psi)^2 - \rho(X,\psi)^2\big)^{2}\big] = \frac{1}{n^2}\sumonn E\big[\big(d_i(X_i,\theta_i)^2 - E[d_i(X_i,\theta_i)^2]\big)^2\big],
\end{align*}
while for $k > 1$ we use the convexity of the function $x \rightarrow x^{2k}$ to get
\begin{align*}
        E\big[\big(d(X,\psi)^2 - \rho(X,\psi)^2\big)^{2k}\big] \leq \frac{1}{n}\sumonn E\big[\big(d_i(X_i,\psi_i)^2 - \rho_i(X_i,\psi_i)^{2}\big)^{2k}\big].
\end{align*}
The triangle and reverse triangle inequalities can be used on both $d_i(X_i,\psi_i)^2$ and $\rho(X_i,\psi_i)^2$. By considering the cases $d_i(X_i,\psi_i)^2 - \rho_i(X_i,\psi_i)^2 > 0$ and $d_i(X_i,\psi_i)^2 - \rho_i(X_i,\psi_i)^2 \leq 0$ this summand can be bounded repeated uses of the triangle inequalities and convexity,
\begin{align*}      E\big[\big(d_i(X_i,&\psi_i)^2 - \rho_i(X_i,\psi_i)^2\big)^{2k}\big]
    \\
     \leq & \; E\big[\big( d_i(X_i,\theta_i)^2 - \sigma_i^2 + 2d_i(X_i,\theta_i)d_i(\theta_i,\psi_i)\big)^{2k}\big]
    +
    \\
    & \; E\big[\big(d_i(X_i,\theta_i)^2 - \sigma_i^2 - 2d_i(X_i,\theta_i)d_i(\theta_i,\psi_i) - 2E[d_i(X_i,\theta_i)d_i(\theta_i,\psi_i)]\big)^{2k}\big]
    \\
    \leq & \; 2^{2k-1}\bigg(E\big[\big(d_i(X_i,\theta_i)^2 - \sigma_i^2\big)^{2k}\big] + 2^{2k}d_i(\theta_i,\psi_i)^{2k}E\big[d_i(X_i,\theta_i)^{2k}\big]\bigg) + 
    \\
    & \; 3^{2k - 1}\bigg(E\big[\big(d_i(X_i,\theta_i)^2 - \sigma_i^2\big)^{2k}\big] +
    2^{2k}d_i(\theta_i,\psi_i)^{2k}\big(E\big[d_i(X_i,\theta_i)^{2k}\big] +
    \\
    & E\big[d_i(X_i,\theta_i)\big]^{2k}\big)\bigg).
\end{align*}
This implies that
\begin{align*}
    E\big[\big(d(X,\psi)^2 - \rho(X,\psi)^2\big)^{2k}\big] \leq & (2^{2k-1} + 3^{2k-1})(m_{4k} + m_2^{2k}) + 
    \\
    & 3^{4k}(m_{2k} + m_1^{2k}) d(\theta,\psi)^{2k},
\end{align*}
from which Chebychev's inequality gives
\begin{align*}
       P\big(\vert d(X,\psi)^2 - \rho(X,\psi)^2)\vert > t\big)  \leq & \frac{1}{t^{2k}} \big[(2^{2k-1} +
       3^{2k-1})(m_{4k} + m_2^{2k}) + 
       \\
       & 3^{4k}(m_{2k} + m_1^{2k}) d(\theta,\psi)^{2k}\big]
       \nonumber
       \\
        \coloneqq & \frac{C_{k}}{t^{2k}}.
\end{align*}
For $k = 1$ this expression can be multiplied by $\frac{1}{n}$.
\end{proof}
The bound in Lemma \ref{cheblemma} is especially useful because if $t = d(\theta,\psi)^2$ then the resulting expression will be $O(d(\theta,\psi)^{-2k})$. The key step which makes this $O\big(d(\theta,\psi)^{-2k}\big)$ rate possible is the use of the triangle and reverse triangle inequalities. This makes it so that $d(\theta_i,\psi_i)$ only has an exponent of $1$ in the decomposition of the expression  $d_i(X_i,\theta_i)^2 - \rho_i(X_i,\theta_i)^2$.    

\begin{customthm}{3}
Let $\{a_n\}$ be a sequence with $a_n \rightarrow \infty$ and take $P \in \mathcal{P}_m^{(n)}$ to be any distribution on $\mathcal{X}^{(n)}$ with a Fr\'echet mean $\theta^{(n)}$ that satisfies $ d(\theta^{(n)},\psi^{(n)})^2 \leq n/a_n$.   
There exists an $n^*(m,\{a_n\})$ such that if $n \geq n^*$ then $R(P,\delta_{JS}) < R(P,X^{(n)})$.
\end{customthm}
\begin{proof}
The proof is split into two parts; the first is the case where $\sup_n d(\theta^{(n)},\psi^{(n)})^2 = M < \infty$ and the second case has $\sup_n d(\theta^{(n)},\psi^{(n)})^2 = \infty$. In the first case we will show the existence of an $N_1(m,M)$ where $R(P,\delta_{JS}) < R(P,X^{(n)})$ for $n \geq N_1$. In the second case it will be shown that there exists an $\Tilde{M}$ and an $\epsilon$ such that $R(P,\delta_{JS}) < R(P,X^{(n)})$ whenever $d(\theta^{(n)},\psi^{(n)})^2 > \Tilde{M}$ and $d(\theta^{(n)},\psi^{(n)})^2/n < \epsilon$. As we have assumed that $d(\theta^{(n)},\psi^{(n)})^2/n \leq a_n^{-1}$ there exists an $N_2(m,\{a_n\})$ such that $a_n^{-1} < \epsilon$ and $d(\theta^{(n)},\psi^{(n)})^2 > \tilde{M}$ for $n \geq N_2$. Then for $n \geq n^*(m,\{a_n\}) \coloneqq \max[N_1(m,\Tilde{M}),N_2(m,\{a_i\})]$ the theorem then follows because the first case applies if $d(\theta^{(n)},\psi^{(n)})^2 \leq \tilde{M}$ and the second applies if $d(\theta^{(n)},\psi^{(n)})^2 > \tilde{M}$. 

\textit{Proof of the bounded case:}
Assume that $\sup_n d(\theta^{(n)},\psi^{(n)})^2 \leq M$ for some $M > 0$. Also assume $d(\theta^{(n)},\psi^{(n)})^2 > 0$ as otherwise the shrinkage estimator clearly outperforms $X^{(n)}$. A bound for $R(P,\delta_{JS})$ is obtained by bounding the expectations of each of the terms $(a),(b),(c)$ in expression \eqref{lossbound}.
The probability $P(A^c) = P\big(d(X^{(n)},\psi^{(n)})^2 < \sigma^2\big)$ is bounded using Chebychev's inequality and the Hadamard bias-variance inequality,
\begin{align}
    P\big(s > d(X^{(n)},\psi^{(n)})^2\big) & = P\big(s - \rho(X^{(n)},\psi^{(n)})^2 > d(X^{(n)},\psi^{(n)})^2 - \rho(X^{(n)},\psi^{(n)})^2\big) \nonumber
    \\
    & \leq P\big(d(\theta^{(n)},\psi^{(n)})^2 + \sigma^2 - s < \rho(X^{(n)},\psi^{(n)})^2 - d(X^{(n)},\psi^{(n)})^2\big) \nonumber
    \\
    & \leq \frac{C_1}{n\big(d(\theta^{(n)},\psi^{(n)})^2 + \sigma^2 - s\big)^2}. \label{simplechebbound}
\end{align}
This inequality holds for all $s < \sigma^2 + d(\theta^{(n)},\psi^{(n)})^2$. The number $C_1$, taken from Lemma \ref{cheblemma}, is independent of $n$ by virtue of $d(\theta^{(n)},\psi^{(n)})^2$ being bounded by $M$.
Taking $s = \sigma^2$, this shows that $E[(c)] = E(I_{A^c})d(\theta^{(n)},\psi^{(n)})^2 \leq C_1/n$. Using \eqref{simplechebbound} again we bound $E[(b)]/d(\theta^{(n)},\psi^{(n)})^2 =  E\big(I_A w(X^{(n)})\big)$ by
\begin{align*}
    E\big(I_A\frac{\sigma^2}{d(X^{(n)},\psi^{(n)})^2}\big)  \leq & P\big(d(\theta^{(n)},\psi^{(n)})^2 + \sigma^2/2 >  d(X^{(n)},\psi^{(n)})^2\big) +
    \\
    & \frac{\sigma^2}{d(\theta^{(n)},\psi^{(n)})^2 + \sigma^2/2}
    \\
     \leq & \frac{4C_1}{n\sigma^4} + \frac{\sigma^2}{d(\theta^{(n)},\psi^{(n)})^2 + \sigma^2/2}.
\end{align*}
The term $E[(a)]$ is handled by the inequality
\begin{align*}
    E\big[I_A\big(1 - w(X^{(n)})\big)\big(d(X^{(n)},\theta^{(n)})^2 - \sigma^2\big)\big] & \leq E[\big(d(X^{(n)},\theta^{(n)})^2 - \sigma^2\big)^2\big]^{1/2}
    \\
    & = \frac{\big(\frac{1}{n}\sumonn E\big[\big(d_i(X_i,\theta_i)^2 - \sigma_i^2\big)^2\big]\big)^{1/2}}{\sqrt{n}}
    \\
    & \leq \frac{(m_4 +  m_2^2)^{1/2}}{\sqrt{n}}.
\end{align*}
Taken together, these inequalities yield the risk upper bound
\begin{align}
    R(P,\delta_{JS}) \leq & \; E[(a)] + E[(b)] + E[(c)] \nonumber
    \\
    \leq &\; \frac{1}{\sqrt{n}} \bigg[\frac{4C_1M}{\sqrt{n}\sigma^4} + \frac{C_1}{\sqrt{n}} + (m_4 + m_2^2)^{1/2}\bigg] + \bigg[\frac{M}{M + \sigma^2/2}\bigg] \sigma^2.
    \label{bddriskbound}
\end{align}
This is less than $\sigma^2$ as long as $n \geq N_1(m,M)$ is large enough so that the $O(n^{-1/2})$ term in \eqref{bddriskbound} is less than $\sigma^2/(2M + \sigma^2)$. 

\textit{Proof of the unbounded case:} Here we assume that  $d(\theta^{(n)},\psi^{(n)})^2 \rightarrow \infty$ but $d(\theta^{(n)},\psi^{(n)})^2/n \rightarrow 0$. By the reasoning in \eqref{approxriskbdd} we expect that the benefit of shrinkage is approximately $\frac{\sigma^2}{\sigma^2 + d(\theta^{(n)},\psi^{(n)})^2}\sigma^2$ which is $O\big(d(\theta^{(n)},\psi^{(n)})^{-2}\big)$. Thus we seek to send all other terms in the risk bound to zero at rates faster than this. We immediately have a bound on $E[(c)] = E\big(I_{A^c}d(\theta^{(n)},\psi^{(n)})^2\big)$ since $P(A^c) \leq  P\big(\vert d(X^{(n)},\psi^{(n)})^2 - \rho(X^{(n)},\psi^{(n)})^2 \vert > d(\theta^{(n)},\psi^{(n)})^2 \big)$ and by taking $k = 3$ in Lemma \ref{cheblemma} we find that $E\big(I_{A^c}d(\theta^{(n)},\psi^{(n)})^2\big) = O\big(d(\theta^{(n)},\psi^{(n)})^{-4}\big)$. 

Rewriting $I_A$ as $1- I_{A^c}$ in the term $E[(a)] = E\big[I_{A}\big(1-w(X^{(n)})\big)\big(d(X^{(n)},\theta^{(n)})^2 - \sigma^2\big)\big]$ we find that
\begin{align*}
E\big[I_{A}\big(1-w(X^{(n)})\big)\big(d(X^{(n)},\theta^{(n)})^2 - \sigma^2\big)\big] = & \;  E\big[I_{A^c}\big(\sigma^2 - d(X^{(n)},\theta^{(n)})^2\big)\big] +
\\
& \; E\big[I_A w(X^{(n)}) \big(\sigma^2 - d(X^{(n)},\theta^{(n)})^2\big)\big].
\end{align*}
By Cauchy-Schwartz,
\begin{align*}
    E\big[I_{A^c}\big(\sigma^2 - d(X^{(n)},\theta^{(n)})^2\big)\big] & \leq \big(P(A^c)E\big[\big(\sigma^2 - d(X^{(n)},\theta^{(n)})^2\big)^2\big]\big)^{1/2}
    \\
    & \leq P(A^c)^{1/2}(m_4 + \sigma^4)^{1/2}.
\end{align*}
This term is $O\big(d(\theta^{(n)},\psi^{(n)})^{-4}\big)$ since $P(A^c)$ is $O\big(d(\theta^{(n)},\psi^{(n)})^{-4}\big)$ by taking $k = 4$ in Lemma \ref{cheblemma}. Next we bound the term $E\big[I_A w(X)\big(\sigma^2 - d(X^{(n)},\theta^{(n)})^2\big)\big]$. Let $B \coloneqq \{X^{(n)} : d(X^{(n)},\psi^{(n)})^2 - \rho(X^{(n)},\psi^{(n)})^2 \geq -d(\theta^{(n)},\psi^{(n)})^2/2\}$ then
\begin{align*}
    E\big[I_A & w(X^{(n)})\big(\sigma^2 -  d(X^{(n)},\theta^{(n)})^2\big)\big]  \
    \\
    & \leq  \sigma^2 E\bigg[\frac{\vert\sigma^2 - d(X^{(n)},\theta^{(n)})^2\vert}{d(X^{(n)},\psi^{(n)})^2 - \rho(X^{(n)},\psi^{(n)})^2 + d(\theta^{(n)},\psi^{(n)})^2 +\sigma^2}I_B\bigg] + \sigma^2 E(I_{B^c})
    \\
    & \leq \frac{2\sigma^2}{d(\theta^{(n)},\psi^{(n)})^2}E(\vert \sigma^2 - d(X^{(n)},\theta^{(n)})^2\vert ) + \sigma^2P(B^c)
    \\
    &  \leq \frac{2\sigma^2(m_4 + m_2^2)^{1/2}}{d(\theta,\psi)^2\sqrt{n}} + \sigma^2P(B^c).
\end{align*}
The first term is $O\big(d(\theta^{(n)},\psi^{(n)})^{-2}n^{-1/2}\big)$ while $P(B^c)$ is $O\big(d(\theta^{(n)},\psi^{(n)})^{-4}\big)$ by Lemma \ref{cheblemma} so the entire expression is $O\big(d(\theta^{(n)},\psi^{(n)})^{{-2.5}}\big)$ by the assumption that $d(\theta^{(n)},\psi^{(n)})^2 = o(n)$. The remaining term, $E[(b)] = \sigma^2 d(\theta^{(n)},\psi^{(n)})^2 E\big(I_A\frac{1}{d(X^{(n)},\psi^{(n)})^2}\big)$, can be decomposed as
\begin{align*}
        E\big(& I_A\frac{1}{d(X,\psi)^2}\big) 
         =\int_{0}^{\sigma^{-2}}P\big(\frac{1}{d(X^{(n)},\psi^{(n)})^2} > t\big)dt
    \\
    & = \int_0^{(d(\theta^{(n)},\psi^{(n)})^2 + \frac{\sigma^2}{2})^{-1}} P\big(\frac{1}{d(X^{(n)},\psi^{(n)})^2} > t\big)dt  
    + 
    \\
    & \;\;\;\; \int_{(d(\theta^{(n)},\psi^{(n)})^2 + \frac{\sigma^2}{2})^{-1}}^{\sigma^{-2}} P\big(\frac{1}{t}  > d(X^{(n)},\psi^{(n)})^2 \big)dt
    \\
    & \leq \frac{1}{d(\theta^{(n)},\psi^{(n)})^2 + \frac{\sigma^2}{2}} \  +
    \\
    &    \int_{(d(\theta^{(n)},\psi^{(n)})^2+ \frac{\sigma^2}{2})^{-1}}^{\sigma^{-2}} P\bigg(\frac{1}{t} - \rho(X^{(n)},\psi^{(n)})^2  > d(X^{(n)},\psi^{(n)})^2 - \rho(X^{(n)},\psi^{(n)})^2\bigg)dt.
\end{align*}
If the second term tends to zero at a rate faster than
 $O\big(d(\theta^{(n)},\psi^{(n)})^{-2}\big)$ then this will complete the proof. By Chebychev's inequality we find that for $t \geq (d(\theta^{(n)},\psi^{(n)})^2 + \sigma^2)^{-1}$,
\begin{align}
    P\big(\frac{1}{t} - \rho(X^{(n)},\psi^{(n)})^2 > d(X^{(n)},& \psi^{(n)})^2  - \rho(X^{(n)},\psi^{(n)})^2\big)  \nonumber
    \\
    & \leq  \frac{E\big[\big(d(X^{(n)},\psi^{(n)})^2 - \rho(X^{(n)},\psi^{(n)})^2\big)^2\big]}{\big(d(\theta^{(n)},\psi^{(n)})^2 + \sigma^2 - 1/t\big)^2}. 
    \label{rationalbound}
\end{align}
The numerator of this expression is $O(n^{-1}d(\theta^{(n)},\psi^{(n)})^{2})$ by Lemma \ref{cheblemma} with $k = 1$. To ease notation let $a = d(\theta^{(n)},\psi^{(n)})^2 + \sigma^2$. Integrating the denominator of \eqref{rationalbound} gives
 \begin{align*}
     \int \frac{t^2}{(at-1)^2}dt & = \frac{1}{a^2}\bigg(t + \frac{2}{a} ln(at-1) - \frac{1}{a(at-1)} \bigg).
 \end{align*}
 The integral of \eqref{rationalbound} becomes
 \begin{align*}
     \int_{(d(\theta^{(n)},\psi^{(n)})^2 +  \frac{\sigma^2}{2})^{-1}}^{\sigma^{-2}} & P\big(\frac{1}{t}  > d(X^{(n)},\psi^{(n)})^2 \big)dt 
     \\
     & \leq \frac{O\big(n^{-1}d(\theta^{(n)},\psi^{(n)})^2\big)}{a^2}\bigg(t + \frac{2}{a} ln(at-1) - \frac{1}{a(at-1)} \bigg) \bigg\vert_{t = (a - \frac{\sigma^2}{2})^{-1}}^{t = \sigma^{-2}}.
 \end{align*}
 It can be checked that the above expression is $O\big(d(\theta^{(n)},\psi^{(n)})^{-2}n^{-1}\big)$. It follows that $\sigma^2 d(\theta^{(n)},\psi^{(n)})^2 E(I_A \frac{1}{d(X^{(n)},\psi^{(n)})^2})$ is $\sigma^2\frac{d(\theta^{(n)},\psi^{(n)})^2}{d(\theta^{(n)},\psi^{(n)})^2 + \sigma^2/2} +  O(n^{-1})$. Putting all of these bounds together shows that $R(\theta,\delta_{JS}) \leq \sigma^2\frac{d(\theta,\psi)^2}{d(\theta,\psi)^2 + \sigma^2/2} + O(n^{-1}) + O\big(d(\theta,\psi)^{-4}\big) + O\big(d(\theta,\psi)^{-2.5}\big)$. It follows that there exists an $\epsilon$ and a $\tilde{M}$ such that $R(P,\delta_{JS}) < \sigma^2$ whenever $d(\theta^{(n)},\psi^{(n)})^2/n < \epsilon$ and $d(\theta^{(n)},\psi^{(n)})^2 > \tilde{M}$ as desired.  
\end{proof}

\begin{customthm}{4}
    Let $X^{(n)} \sim P^{(n)} \in \mathcal{P}_m^{(n)}$ for all $n \in \mathbb{N}$. If $d(\theta^{(n)},\psi^{(n)})^2 \rightarrow \infty$ for a sequence of shrinkage points $\{\psi^{(n)}\}_{n = 1}^{\infty}$, then $\limsup_n R\big(P^{(n)},\delta_{JS}(X^{(n)})\big) = \sigma^2.$ It follows from Theorem \ref{freqdomthm} that $\limsup_n R\big(P^{(n)},\delta_{JS}(X^{(n)})\big) \leq \lim_n R(P^{(n)},X^{(n)})$ for any sequence of $\psi^{(n)}$'s. Additionally, for all $\epsilon > 0$,  $\lim_n P\big(L\big(\theta^{(n)},\delta_{JS}(X^{(n)})\big) > \sigma^2 + \epsilon\big) = 0$.
\end{customthm}
\begin{proof}
From Theorem \ref{freqdomthm}, $E[(a)]= O(n^{-1/2})$ and $E[(c)] = O\big(d(\theta^{(n)},\psi^{(n)})^{-4}\big)$. Therefore,  $\limsup_n R\big(P^{(n)},\delta_{JS}(X^{(n)})\big) = \sigma^2 \limsup_n E\big(I_{A}\frac{d(\theta^{(n)},\psi^{(n})^2}{d(X^{(n)},\psi^{(n)})^2)}\big)$. Applying the reverse triangle inequality to $d(X^{(n)},\theta^{(n)})^2$ gives
\begin{IEEEeqnarray}{rl}
         E\bigg(I_A&\frac{d(\theta^{(n)},\psi^{(n)})^2}{d(X^{(n)},\psi^{(n)})^2}\bigg)  \nonumber
         \\
          \leq & \; E\bigg[ \frac{d(\theta^{(n)},\psi^{(n)})^2}{d(\theta^{(n)},\psi^{(n)})^2 - 2d(\theta^{(n)},\psi^{(n)})d(X^{(n)}, \theta^{(n)}) + d(X^{(n)},\theta^{(n)})^2} \wedge \frac{d(\theta^{(n)},\psi^{(n)})^2}{\sigma^2}\bigg] \nonumber
    \\
     \leq & \; E\bigg[ \frac{d(\theta^{(n)},\psi^{(n)})}{d(\theta^{(n)},\psi^{(n)}) - 2d(X^{(n)},\theta^{(n)}) } \wedge \frac{d(\theta^{(n)},\psi^{(n)})^2}{\sigma^2}\bigg]. \label{asymptbd1}
\end{IEEEeqnarray}

Define $D \coloneqq \{2d(X^{(n)},\theta^{(n)}) > d(\theta^{(n)},\psi^{(n)})^{1/2}\}$ from which an application of Chebychev's inequality shows that 
$P(D)  \leq 2^6m_6 d(\theta^{(n)},\psi^{(n)})^{-3}$. Using this in \eqref{asymptbd1},
\begin{align*}
   E\bigg[& \frac{d(\theta^{(n)},\psi^{(n)})}{d(\theta^{(n)},\psi^{(n)}) - 2d(X^{(n)},\theta^{(n)}) } \wedge \frac{d(\theta^{(n)},\psi^{(n)})^2}{\sigma^2}\bigg]  
   \\
   & \leq E\big[I_{D^c}\frac{d(\theta^{(n)},\psi^{(n)})}{d(\theta^{(n)},\psi^{(n)}) - 2d(X^{(n)},\theta^{(n)}) }\big] +
 P(D) \frac{d(\theta^{(n)},\psi^{(n)})^2}{\sigma^2} 
    \\
     & \leq  \frac{d(\theta^{(n)},\psi^{(n)})}{d(\theta^{(n)},\psi^{(n)}) - d(\theta^{(n)},\psi^{(n)})^{1/2}} +
 \frac{2^6m_6}{\sigma^2d(\theta^{(n)},\psi^{(n)})}
\end{align*}
It follows that $\limsup_n \sigma^2 E\big(I_{A}\frac{d(\theta^{(n)},\psi^{(n)})^2}{d(X^{(n)},\psi^{(n)})^2)}\big) \leq \sigma^2$ as needed.

To show that $\lim_n P\big(L(\theta^{(n)},\delta_{JS}(X^{(n)})\big) > \sigma^2 + \epsilon) = 0$ we split up this probability as
\begin{align*}
    P\bigg[L(\theta^{(n)},\delta_{JS}(X^{(n)})) > \sigma^2 + \epsilon\bigg]  \leq & P\bigg[ (a) + (c) > \frac{\epsilon}{2}\bigg] + 
    \\
    & P\bigg[\sigma^2\big(\frac{d(\theta^{(n)},\psi^{(n)})^2}{d(X^{(n)},\psi^{(n)})^2} - 1\big) > \frac{\epsilon}{2}\bigg]. 
\end{align*}
The limit of the first term tends to zero since the limit of the expectations of $(a)$ and $(c)$ is zero. The second term can be re-written as \begin{align*}
    P\bigg[d(\theta^{(n)},& \psi^{(n)})^2  > (1 + \frac{\epsilon}{2\sigma^2})d(X^{(n)},\psi^{(n)})^2\bigg] \leq 
    \\
    & P\bigg[\frac{\epsilon}{2\sigma^2}\rho(X^{(n)},\psi^{(n)})^2 <
     (1 + \frac{\epsilon}{2\sigma^2})\big(\rho(X^{(n)},\psi^{(n)})^2 - d(X^{(n)},\psi^{(n)})^2\big)\bigg].
\end{align*}
Taking $k =1$ in Lemma \ref{cheblemma}, this  probability is $O(n)$, proving the result. 
\end{proof}

\begin{customthm}{5}
Under the distributional assumptions in \eqref{hierdistassume}, suppose that there is a sequence $a_n \rightarrow \infty$ such that  $ d(\mu^{(n)},\psi^{(n)})^2 \leq n/a_n$. There exists an $n^*(m,l,\{a_n\})$ such that if $n \geq n^*$ then the Bayes risk satisfies $E\big(R(P_{\theta^{(n)}}^{(n)},\delta_{JS})\big) < E\big(R(P_{\theta^{(n)}}^{(n)},X^{(n)})\big)$.
\end{customthm}

\begin{proof}
Conditional on $\theta^{(n)}$, we are able to use the same bounds derived in Theorem \ref{freqdomthm}. Using these bounds and the same proof technique, we first show that if $d(\mu^{(n)},\psi^{(n)})^2 \leq \tilde{M}$ then there exists an $n^*(m,l,\tilde{M})$ with 
$E\big(R(P_{\theta^{(n)}},\delta_{JS})\big) \leq E\big(R(P_{\theta^{(n)}},X)\big)$ whenever $n \geq n^*$. 

 By the risk bound \eqref{bddriskbound} we just need to show that the following quantity can be made to be less than $\sigma^2$
\begin{align*}
        E\big(R(P_{\theta^{(n)}},\delta_{JS})\big) \leq & \frac{1}{\sqrt{n}} \bigg[\frac{4E\big(C_1d(\theta^{(n)},\psi^{(n)})^2\big)}{\sigma^2\sqrt{n}} + \frac{E(C_1)}{\sqrt{n}} + (m_4 + m_2^2)^{1/2}\bigg] + 
        \\
        & E\bigg[\frac{d(\theta^{(n)},\psi^{(n)})^2}{d(\theta^{(n)},\psi^{(n)})^2 + \sigma^2/2}\bigg] \sigma^2.
\end{align*}
Recall that $C_1 = O\big(d(\theta^{(n)},\psi^{(n)})^2/n\big)$, and so both of the terms $E\big(d(\theta^{(n)},\psi^{(n)})^4\big)$ and $E\big(C_1d(\theta^{(n)},\psi^{(n)})^2\big)$ can be bounded above by $ 2^3(\Tilde{M}^2 +  l_4)$. The function

\noindent $d(\theta^{(n)},\psi^{(n)})^2/\big(d(\theta^{(n)},\psi^{(n)})^2 + \sigma^2/2\big)$ is concave and increasing in $d(\theta^{(n)},\psi^{(n)})^2$ so 
\begin{align*}
    E\big(\frac{d(\theta^{(n)},\psi^{(n)})^2}{d(\theta^{(n)},\psi^{(n)})^2 + \sigma^2/2}\big) \sigma^2 &\leq    \frac{E\big(d(\theta^{(n)},\psi^{(n)})^2\big)}{E\big(d(\theta^{(n)},\psi^{(n)})^2\big) + \sigma^2/2} \sigma^2 
    \\
    & \leq \frac{4E\big(d(\theta^{(n)},\mu^{(n)})^2\big) + 4\Tilde{M}}{4E\big(d(\theta^{(n)},\mu^{(n)})^2\big) + 4\Tilde{M} + \sigma^2/2} \sigma^2 < \sigma^2.
\end{align*}
This shows that $E\big(R(P_{\theta^{(n)}},\delta_{JS})\big) < \sigma^2$ for large enough $n$.

Next we want to show that there exists an $\epsilon$ and a $\tilde{M}$ such that $E\big(R(P_{\theta^{(n)}},\delta_{JS})\big) < \sigma^2$ when $d(\mu^{(n)},\psi^{(n)})^2 \geq \tilde{M}$ and $d(\mu^{(n)},\psi^{(n)})^2/n < \epsilon$.  Conditional on $\theta^{(n)}$, the second risk bound found in Theorem \ref{freqdomthm} for an unbounded $d(\theta^{(n)},\psi^{(n)})^2$ shows that \begin{align*}
    R(P_{\theta^{(n)}},\delta_{JS}) \leq \sigma^2 - \frac{c_1}{d(\theta^{(n)},\psi^{(n)})^2} + \frac{c_2}{n} + \frac{c_3}{d(\theta^{(n)},\psi^{(n)})^{2.5}} + \frac{c_4}{d(\theta^{(n)},\psi^{(n)})^4},
\end{align*}
for some positive constants $c_i$, that depend only on the central-moments bounds of $X_i^{(n)}$, $m_j$. The same derivation used in Theorem \ref{freqdomthm} and Lemma \ref{cheblemma} shows that the event $C \coloneqq \{\vert d(\theta^{(n)},\psi^{(n)})^2 - E\big(d(\theta^{(n)},\psi^{(n)})^2\big)\vert >  d(\mu^{(n)},\psi^{(n)})^2/2\}$ has $P(C) = O\big(d(\mu^{(n)},\psi^{(n)})^{-2k}\big)$.  Using Jensen's inequality on $-c_1/d(\theta^{(n)},\psi^{(n)})^2$ we get
\begin{align*}
    E\big(I_{C^c}R(P_{\theta^{(n)}},\delta_{JS})\big)\leq & \sigma^2 - \frac{c_1}{4d(\mu^{(n)},\psi^{(n)})^2 + 4E\big(d(\theta^{(n)},\mu^{(n)})^2\big)} + \frac{c_2}{n} + 
    \\
    & \frac{c_32^{2.5}}{d(\mu^{(n)},\psi^{(n)})^{2.5}} + \frac{c_42^4}{d(\mu^{(n)},\psi^{(n)})^4}.
\end{align*}
Under $C$  we have 
\begin{align*}
    E\big(I_{C}R(P_{\theta^{(n)}},\delta_{JS}) \big) & \leq E\bigg[I_C\max\big(\sigma^2, 4d(\mu^{(n)},\psi^{(n)})^2 + 4d(\mu^{(n)},\theta^{(n)})^2\big)\bigg].
\end{align*}
This term can be made $O\big(d(\mu^{(n)},\psi^{(n)})^{-4}\big)$ by Cauchy-Schwartz and the form of $P(C)$. Thus, $E\big(R(P_{\theta^{(n)}},\delta_{JS})\big) \leq \sigma^2 - O(d(\mu^{(n)},\psi^{(n)})^{-2}) + O(n^{-1}) + O(d(\mu^{(n)},\psi^{(n)})^{-2.5})$ so there exists the desired $\Tilde{M}$ and $\epsilon$. 
\end{proof}

\begin{customthm}{6}
Let $X^{(n)} \sim P^{(n)}_{\theta^{(n)}}, \; n \in \mathbb{N}$ and $E_2X^{(n)} = \theta^{(n)} \sim Q^{(n)}, \; n \in \mathbb{N}$ satisfy the distributional assumptions in \eqref{hierdistassume}. If $d\big(\mu^{(n)},\psi^{(n)}\big)^2 \rightarrow \infty$ for a sequence of shrinkage points $\{\psi^{(n)}\}_{n = 1}^{\infty}$, then $\limsup_n E\big(R(P^{(n)}_{\theta^{(n)}},\delta_{JS})\big) = \lim_n E\big(R(P^{(n)}_{\theta^{(n)}},X^{(n)})\big)$. By Theorem \ref{Bayesdom1}, for any sequence of $\psi^{(n)}$'s, $\limsup_n E\big(R(P^{(n)}_{\theta^{(n)}},\delta_{JS})\big) \leq \lim_n E\big(R(P^{(n)}_{\theta^{(n)}},X^{(n)})\big)$, with strict inequality if $d\big(\mu^{(n)},\psi^{(n)}\big)^2/n = o(1)$. Additionally, we have that for all $\epsilon > 0$, $\lim_n P\big(L(\theta^{(n)},\delta_{JS}) > \sigma^2 + \epsilon\big) = 0$.
\end{customthm}

\begin{proof}
We first show that $\limsup_n E\big(R(P^{(n)}_{\theta^{(n)}},\delta_{JS}) \big) = \sigma^2$ when $d(\mu^{(n)},\psi^{(n)})^2 \rightarrow \infty$. 
It follows from Theorem \ref{freqdomthm} that  $E[(a)|\theta^{(n)}]= O\big(d(\theta^{(n)},\psi^{(n)})^{-2} n^{-1/2}\big)$ and $E[(c)|\theta^{(n)}] = O\big(d(\theta^{(n)},\psi^{(n)})^{-4}\big)$. Defining the event $C$ as $\{ \vert d(\theta^{(n)},\psi^{(n)})^2 - E\big(d(\theta^{(n)},\psi^{(n)})^2\big) \vert > E\big(d(\mu^{(n)},\psi^{(n)})^2\big)/2\}$, we get $E\big(E[(a)|\theta^{(n)}]\big) = E\big(I_CE[(a)|\theta^{(n)}]\big) + E\big(I_{C^c}E[(a)|\theta^{(n)}]\big)$ and we can split $E\big(E[(c)|\theta^{(n)}]\big)$ similarly. By assumption. $d(\mu^{(n)},\psi^{(n)})^2 \rightarrow \infty$ so we get $\limsup_n E\big(I_{C^c}E[(a)|\theta^{(n)}]\big) = 0$ and from  Lemma \ref{cheblemma}, $\limsup_n E\big(I_CE[(a)|\theta^{(n)}]\big)  = 0$. Applying the same reasoning to $E[(c)]$ shows that $\limsup_n \big(E[(a)] + E[(c)] \big)= 0$. The remaining term is in the asymptotic risk $\limsup_n E\big(R(P^{(n)}_{\theta^{(n)}},\delta_{JS})\big)$ is

\noindent $ \limsup_n E[(b)] =  \limsup_n \sigma^2 E\big(I_{A}\frac{d(\theta^{(n)},\psi^{(n)})^2}{d(X^{(n)},\psi^{(n)})^2)}\big)$. By the reverse triangle inequality
\begin{align*}
    E\big(I_A&\frac{d(\theta^{(n)},\psi^{(n)})^2}{d(X^{(n)},\psi^{(n)})^2}\big) 
    \\
     \leq & \; E\bigg[ \frac{d(\theta^{(n)},\psi^{(n)})^2}{d(\theta^{(n)},\psi^{(n)})^2 - 2d(\theta^{(n)},\psi^{(n)})d(X^{(n)},\theta^{(n)}) + d(X^{(n)},\theta^{(n)})^2} \wedge
 \frac{d(\theta^{(n)},\psi^{(n)})^2}{\sigma^2}\bigg]
    \\
     \leq & \; E\bigg[ \frac{d(\theta^{(n)},\psi^{(n)})}{d(\theta^{(n)},\psi^{(n)}) - 2d(X^{(n)},\theta^{(n)}) }\wedge \frac{d(\theta^{(n)},\psi^{(n)})^2}{\sigma^2}\bigg].
\end{align*}
Let $D \coloneqq \{2d(X^{(n)},\theta^{(n)}) > d(\theta^{(n)},\psi^{(n)})^{1/2}\}$ from which Chebychev's inequality yields, 
$P(D|\theta^{(n)}) \coloneqq P\big(2d(X^{(n)},\theta^{(n)}) > d(\theta^{(n)},\psi^{(n)})^{1/2}|\theta^{(n)}\big) \leq 2^6m_6 d(\theta^{(n)},\psi^{(n)})^{-3}$. Using $D$ in the minimum above gives  
\begin{align*}
    E\bigg[& \frac{d(\theta^{(n)},\psi^{(n)})}{d(\theta^{(n)},\psi^{(n)}) - 2d(X^{(n)},\theta^{(n)}) }\wedge \frac{d(\theta^{(n)},\psi^{(n)})^2}{\sigma^2}\bigg\vert \theta^{(n)}\bigg] 
    \\
    \leq & \; E\big(I_{D^c}\frac{d(\theta^{(n)},\psi^{(n)})}{d(\theta^{(n)},\psi^{(n)}) - 2d(X^{(n)},\theta^{(n)}) } \big\vert \theta^{(n)}\big) +
 \frac{d(\theta^{(n)},\psi^{(n)})^2}{\sigma^2} P(D|\theta^{(n)})
    \\
     \leq & \; \frac{d(\theta^{(n)},\psi^{(n)})}{d(\theta^{(n)},\psi^{(n)}) - d(\theta^{(n)},\psi^{(n)})^{1/2}} +
 \frac{2^6m_6}{\sigma^2d(\theta^{(n)},\psi^{(n)})}.
\end{align*}
Lastly, the second term can be split by $I_C + I_{C^c}$ to show that the expectation of this term over $\theta^{(n)}$ is $ O\big(d(\mu^{(n)},\psi^{(n)})^{-1}\big)$. The first term, $1/\big(1-d(\theta^{(n)},\psi^{(n)})^{-1/2}\big)$, is arbitrarily close to $1$ when the event  $E \coloneqq \{d(\theta^{(n)},\psi^{(n)})^2 > M\}$, occurs for a large, fixed $M$. For any choice of $M > 0$,  $P(E^c) = O\big(d(\mu^{(n)},\psi^{(n)})^{-4}\big)$. As the first term is bounded above by $d(\theta^{(n)},\psi^{(n)})^2/\sigma^2$, the expectation of the first term times $I_{E^c}$ tends to zero in the limit. This shows that $\limsup_n \sigma^2 E\big(I_{A}\frac{d(\theta^{(n)},\psi^{(n)})^2}{d(X^{(n)},\psi^{(n)})^2)}\big) = \sigma^2$ as desired.

To show that $\lim_n P\big(L(\theta^{(n)},\delta_{JS}) > \sigma^2 + \epsilon\big) = 0$ we split up this probability as
\begin{align*}
    P\big(L(\theta^{(n)},\delta_{JS}) > \sigma^2 + \epsilon\big) & \leq P\big[(a) + (c) > \frac{\epsilon}{2}\big] +  P\big[\sigma^2\big(\frac{d(\theta^{(n)},\psi^{(n)})^2}{d(X^{(n)},\psi^{(n)})^2} - 1\big) > \frac{\epsilon}{2}\big].
\end{align*}
That the first probability tends to zero follows immediately from the bounds for the expectations of these terms developed above. Conditioning on $\theta^{(n)}$, the second probability can be re-written as \begin{align*}
    P\big[d(\theta^{(n)},&\psi^{(n)})^2  >  (1 + \frac{\epsilon}{2\sigma^2})d(X^{(n)},\psi^{(n)})^2| \theta^{(n)} \big] 
    \\
     & \leq  P[\frac{\epsilon}{2\sigma^2}\rho(X^{(n)},\psi^{(n)})^2 <  (1 + \frac{\epsilon}{2\sigma^2})\big(\rho(X^{(n)},\psi^{(n)})^2 - d(X^{(n)},\psi^{(n)})^2\big)| \theta^{(n)} \big].
\end{align*}
Here $\rho(X^{(n)},\psi^{(n)})^2 = E\big(d(X^{(n)},\psi^{(n)})^2|\theta^{(n)}\big) \geq d(\theta^{(n)},\psi^{(n)})^2$.
Taking $k = 1$ in Lemma \ref{cheblemma}, this probability is $O(n)$ independently of $\theta^{(n)}$, proving the result. 
\end{proof}

\begin{customthm}{7}
Assume that $X^{(n)} \sim P^{(n)}_{\theta^{(n)}} = \tilde{P}_{\theta_1^{(n)}} \times \cdots \times \tilde{P}_{\theta_n^{(n)}}$ and $\theta^{(n)} \sim Q^{(n)} = \tilde{Q} \times 
\cdots \times \tilde{Q}$ for all $n \in \mathbb{N}$. If $E\big(\tilde{d}(\widebar{X}^{(n)},E_2X_1^{(n)})^2\big) = O(n^{-1})$ with a multiplicative constant that only depends on $m$ and $l$, then there exists an $n^*(m,l)$ such that for $n \geq n^*$ then $E\big(R(P^{(n)}_{\theta^{(n)}},\delta_{JS})\big) < E\big(R(P^{(n)}_{\theta^{(n)}},X^{(n)})\big)$,  where $\delta_{JS}$ is the adaptive shrinkage estimator given by \eqref{jscomponents} with $\psi_i^{(n)} = \widebar{X}^{(n)}$. Furthermore, the same $n^*$ is valid for any distributions $\tilde{P}_{\theta_i}^{(n)} \in \mathcal{P}^{(1)}_m$ and $\tilde{Q}^{(n)} \in \mathcal{P}^{(1)}_l$.
\end{customthm}

\begin{proof} 
To ease notation call $E_2X^{(n)} = \omega$. Let $w_1(X^{(n)})$ be the shrinkage weight formed using $\widebar{X}^{(n)}$ as a shrinkage point and $w_2(X^{(n)})$ be the shrinkage weight formed using $\omega$. The proof of Theorem \ref{Bayesdom1} shows that there exists an $\alpha < \sigma^2$ where we have $E\big(d([X^{(n)},\omega]_{w_2(X^{(n)})},\theta^{(n)})^2\big) \leq \alpha < \sigma^2$ for $n \geq N_1(m,l,\tilde{d}(\tilde{\mu},\tilde{\omega}))$, as $\tilde{\omega}$ is fixed and not data dependent. The value of $\alpha$ can be taken to depend only on $m$ and $l$. We want to show that $[X^{(n)},\widebar{X}^{(n)}]_{w_1(X^{(n)})}$ is sufficiently close to $[X^{(n)},\omega]_{w_2(X^{(n)})}$ so that this second estimator also has a lower Bayes risk than $X^{(n)}$. Throughout we drop all $(n)$ superscripts to further ease notation. We have that 
\begin{align*}
    d([X,\widebar{X}]_{w_1},\theta)^2 \leq \bigg(d([X,\widebar{X}]_{w_1},[X,\omega]_{w_1}) + d([X,\omega]_{w_1},[X,\omega]_{w_2}) + d([X,\omega]_{w_2},\theta) \bigg)^2.
\end{align*}
As $E\big(d([X,\omega]_{w_2},\theta)^2\big)$ is less than or equal to  $\alpha$, expanding the above expression and using Cauchy-Schwartz of any cross product terms it will suffice to show that $E\big(d([X,\widebar{X}]_{w_1},[X,\omega]_{w_1})^2\big) \rightarrow 0$ and $E\big(d([X,\omega]_{w_1},[X,\omega]_{w_2})^2\big) \rightarrow 0$ at known rates as $n \rightarrow \infty$. In a Hadamard space, pairs of geodesics have the following convexity property $d([x,y]_t,[w,z]_t) \leq (1-t)d(x,w) + td(y,z)$ \cite{Sturm}. By the assumption that $E\big(d(\widebar{X},\omega)^2\big) = O(n^{-1})$ we have,
\begin{align*}
    E\big(d([X,\widebar{X}]_{w_1},[X,\omega]_{w_1})^2\big)
    & \leq E\big( \big[(1-w_1)d(X,X) + w_1d(\widebar{X},\omega)\big]^2\big)
    \\
    &\leq E\big(d(\widebar{X},\omega)^2\big) = \frac{C}{n} \rightarrow 0. 
\end{align*}
The other term that we wish to show has a limit of zero can be written as
\begin{align*}
    E\big(d([&X,\omega]_{w_1},[X,\omega]_{w_2})^2\big)  
    \\
    = & \; E\big((w_1 - w_2)^2 d(X,\omega)^2\big)
    \\
     = & \; \sigma^4E\bigg[\big(I_{\{d(X,\widebar{X})^2 \geq \sigma^2\}}d(X,\omega)^2 - I_{\{d(X,\omega)^2 \geq \sigma^2\}}d(X,\widebar{X})^2\big)^2d(X,\omega)^{-2}d(X,\widebar{X})^{-4}\bigg]
    \\
     \leq & \; \sigma^4 E\bigg[I_{\{d(X,\widebar{X})^2 \geq \sigma^2\} \cap \{d(X,\omega)^2 \geq \sigma^2\}}\big(d(X,\omega)^2 - d(X,\widebar{X})^2\big)^2d(X,\omega)^{-2}d(X,\widebar{X})^{-4}\bigg] + 
    \\
    & \; \sigma^4 E\bigg[I_{\{d(X,\widebar{X})^2 < \sigma^2\} \cup \{d(X,\omega)^2 < \sigma^2\}} d(X,\omega)^2\bigg]
    \\
     \leq & \;  \sigma^{4}E\bigg[4\sigma^{-4}\big(d(X,\omega) - d(X,\widebar{X})\big)\big)^2\bigg] +
    \\
    & \; \sigma^4 P\bigg(\{d(X,\widebar{X})^2 < \sigma^2\} \cup \{d(X,\omega)^2 < \sigma^2\} \bigg)E\big(d(X,\omega)^4\big)^{1/2}
    \\
     \leq & \;  4E\big(d(\widebar{X},\omega)^2\big) + \sigma^4 P\bigg(\{d(X,\widebar{X})^2 < \sigma^2\} \cup \{d(X,\omega)^2 < \sigma^2\} \bigg)E\big(d(X,\omega)^4\big)^{1/2} 
\end{align*}
The bias-variance inequality shows that
\begin{align*}
    E\big(d(X,\omega)^2\big) = & \; E\big(E\big(d(X,\omega)^2|\theta\big)\big)
    \\
     \geq & \; E\big(E\big(\sigma^2 + d(\theta,\omega)^2|\theta\big)\big)
     \\
     \geq & \; \sigma^2 + \tau^2 + d(\mu,\omega)^2.
\end{align*}
Applying the weak law of large numbers to $P\big(d(X,\omega)^2 < \sigma^2\big) \leq P\big( \vert d(X,\omega)^2 - E\big(d(X,\omega)^2\big) \ \vert > \tau^2\big)$ shows that this term is $O(n^{-1})$. Similarly,
\begin{align*}
    P\big(d(X,\widebar{X})^2 <  \sigma^2 \big) \leq & P\big(\vert d(X,\widebar{X})^2 - d(X,\omega)^2 \vert > \tau^2/2\big) + 
    \\
    & P\big(\vert d(X,\omega)^2 - E\big(d(X,\omega)^2\big)\vert > \tau^2/2\big).
\end{align*}
Chebychev's inequality can be used on the first term with
\begin{align*}
    E\big(\vert d(X,\widebar{X})^2 - d(X,\omega)^2 \vert \big) \leq E\big( d(\widebar{X},\omega)^2)^{1/2}E\big[ \big(d(X,\widebar{X}) + d(X,\omega)\big)^2 \big]^{1/2}.
\end{align*}
To complete the proof that $    E\big(d([X,\omega]_{w_1},[X,\omega]_{w_2})^2\big) = O(n^{-1/2})$ it suffices to show that the terms $E\big(d(X,\omega)^4\big)$ and $E\big(d(X,\widebar{X})^2\big)$ can be bounded above by expressions involving $m$ and $l$. We first bound $d(\mu,\omega)$, \begin{align*}
    d(\mu,\omega)^2 \leq E\big(d(X,\mu)^2\big) \leq 4E\big(E\big(d(X,\theta)^2|\theta\big)\big) + 4E\big(d(\theta,\mu)^2\big) \leq 4\sigma^2 + 4\tau^2.
\end{align*}
The triangle inequality,
\begin{align*}
    d(X,\widebar{X}) \leq d(X,\theta) + d(\theta,\mu) + d(\mu,\omega) + d(\omega,\widebar{X})
\end{align*}
along with the convexity of $x \rightarrow x^c$ can therefore used to bound $E\big(d(X,\widebar{X})^2\big)$ and similarly for $E\big(d(X,\omega)^4\big)$ as needed. 
\end{proof}
Notice that it is not necessary that $E\big(d(\widebar{X},\omega)^2\big)$ be $O(n^{-1})$ in the above proof. As long as $E\big(d(\widebar{X},\omega)^2\big) \rightarrow 0$ the proof will hold. However, the $n^*$ needed will vary depending on the rate at which $E\big(d(\widebar{X},\omega)^2\big)$ tends to $0$.

\section{Counterexamples}\label{appB}

\subsection{The tower rule need not hold in a Hadamard space} 
Consider the metric tree tripod space pictured in Figure \ref{tripod}. The tree is constructed such that the points $A,C,B$ have edge lengths of $1,1$ and $2$ respectively from the central vertex. Points in this space are points along the edges of the graph and the distance between points in the graph is given by the shortest path distance. For example, $d(A,B) = 3$ while $d(A,C) = 2$. It can be checked that this space satisfies the CAT(0) inequality and so is Hadamard. Suppose that $X$ is a random object that is uniformly distributed on $A,B,C$. The Fr\'echet mean of $X$ is the central vertex. Now let $Y$ be the real valued random variable that has $Y = 0$ when $X = A,C$ and $Y = 1$ when $X = B$. Conditional on $Y = 0$ we know that $X$ must equal either $A$ or $C$ with probability $1/2$ each so that $E_2(X|Y = 0)$ is the center vertex, $E_2Y$. If $Y = 1$ then $E_2(X|Y = 1) = B$. Therefore the graph valued random object $E_2(X|Y)$ equals $E_2Y$ with probability $2/3$ and $B$ with probability $1/3$. It follows that $E_2\big(E_2(X|Y)\big) = [E_2X,C]_{1/3} \neq E_2X$ which shows that the tower rule does not hold in this scenario.
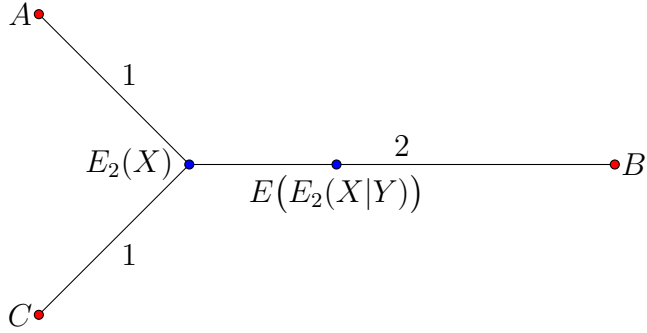
\begin{figure}
    \centering
    
\pgfmathsetmacro{\sqet}{2 + 2*sqrt(8)}
\pgfmathsetmacro{\sqhalf}{(\sqet - 2) /2 + 2}

\begin{tikzpicture}
\draw (0,0) -- (2,2);
\draw (0,4) -- (2,2);
\draw (2,2) -- (\sqet,2);
\draw (1.2,.8) node{$1$};
\draw (1.2,3.2) node{$1$};
\draw (\sqhalf,2.25) node{$2$};
\draw[fill = red] (0,0) circle (.6mm);
\draw[fill = red] (\sqet,2) circle (.6mm);
\draw[fill = red] (0,4) circle (.6mm);
\draw (-.25,4) node{$A$};
\draw (\sqet + 0.25,2) node{$B$};
\draw (-.25,0) node{$C$};
\draw[fill = blue] (2,2) circle (.6mm);
\draw (1.2,2) node{$E_2(X)$};
\draw[fill = blue] (3 + .25*\sqet/2,2) circle (.6mm);
\draw (3 + .25*\sqet/2,2-.4)  node{$E\big(E_2(X|Y)\big)$};
\end{tikzpicture}
    \caption{Tripod space}
    \label{tripod}
\end{figure}

\subsection{Ineffective shrinkage in a positively curved space}
Consider the distribution of $X$ on the circle $\mathbb{S}^1$ where $X = (\cos(\theta),\sin(\theta))$, $\theta \sim Unif[-\pi/2,\pi/2]$ so that the (unique) Fr\'echet mean of $X$ is $(1,0)$. Consider the shrinkage estimator $[X,\psi]_t$ for $\psi \in \mathbb{S}^1$. If $\psi = (\cos(\theta_\psi),\sin(\theta_\psi))$ is chosen such that $\vert \theta_\psi \vert \leq \pi/2$ then this setting is isometric to performing shrinkage estimation on a uniform distribution in $\mathbb{R}$ so that there does exist a $t$ so that $E\big(d([X,\psi]_t,(1,0))^2\big) < E\big(d(X,(1,0))^2\big)$. Conversely, if $\psi = (-1,0)$ is antipodal to $(1,0)$ then for any non-zero amount of shrinkage $t$,  $d\big([X,\psi]_t,(1,0)\big) > d\big(X,(1,0)\big)$. If we then take $X_i, i = 1,...,n$ to be i.i.d with the aforementioned distribution on $\mathbb{S}^1$ we see that the James-Stein estimator $\delta_{JS}$ will necessarily have larger risk than $X \coloneqq (X_1,...,X_n)$ regardless of how large $n$ is taken to be. Note that the circle has constant zero sectional curvature, being locally isometric to $\mathbb{R}$, but has positive Alexandrov curvature.

\section{Fr\'echet Means for Metric Trees}
\label{appC}
In general, the computation of Fr\'echet means can be computationally expensive. For metric trees the situation is straightforward. We provide an efficient gradient descent type algorithm for computing the sample Fr\'echet mean of points lying in a metric tree that can be used to compute a data driven shrinkage point. We assume that the tree has at most $m$ vertices and the maximum degree of each vertex is $D$. For simplicity, all edges are assumed to have weight $1$. The extension of this algorithm to more general weighted trees is straightforward. To ease notation choose an arbitrary root of the tree and represent all vertices of the tree as $v_{(i_1,\ldots,i_k)}, (i_1 ,\ldots,i_k) \in I$ where $k$ is the depth of vertex $v_{(i_1,\ldots,i_k)}$ from the chosen root. All the vertices that are adjacent to the root are identified by $v_{(i_1)}, (i_1) \in I$ and all the vertices distinct from the root that are adjacent to $v_{(i_1)}$ are denoted by $v_{(i_1,i_2)},(i_1,i_2) \in I$ and so on. The set $I$ indexes all possible sequences of unique edges from the root and can be identified with a subset of $\coprod_{k = 1}^m \{1,\ldots,D\}^k $ that has the property that if $(i_1,\ldots,i_{n+1})\in I$ then  $(i_1,\ldots,i_{n}) \in I$. The vertex $v_{\emptyset}$ is taken to represent the root itself. 

The goal of this algorithm is to minimize the Fr\'echet function of the $x_i$'s, defined by $f(v) = \sumonn d(v,x_i)^2$, over all possible points $v \in \mathcal{T}$. The general idea of the algorithm is to start at the vertex $v_{\emptyset}$ and look to see if moving along any edges connected to this vertex reduces the sample Fr\'echet function. This is done by computing the directional derivative of the Fr\'echet function in the direction of each of the finitely many edges that one can move along. If there exists an edge where the directional derivative is negative, move along this edge to the next adjacent vertex, $v_{(i_1)}$. Repeating this process creates a sequence of vertices $v_{\emptyset},v_{(i_1)},v_{(i_1,i_2)},\ldots$ The process terminates at step $k$ when either $v_{(i_1,\ldots,i_k)}$ is found to be optimal or the Fr\'echet function is reduced by moving back to vertex $v_{(i_1,\ldots,i_{k-1})}$ along the edge $[v_{(i_1,\ldots,i_{k-1})},v_{(i_1,\ldots,i_k)}]$. In the later case the sample Fr\'echet mean lies in the interior of the edge $[v_{(i_1,\ldots,i_{k-1})},v_{(i_1,\ldots,i_k)}]$. 

Given points $x_1,\ldots,x_n \in \mathcal{X}$ the root is the sample Fr\'echet mean of these points if and only if
\begin{align}
\label{frectreeinqe}
    \sum_{j = 1}^nd(x_j,[v_{\emptyset},v_{(i_1)}]_{\epsilon})^2 > \sum_{j = 1}^n d(x_j,v_{\emptyset})^2 
\end{align}
for all $(i_1) \in I$ and all small enough $\epsilon$. For each $(i_1) \in I$ let $S_{(i_1)} \coloneqq \{x_j \neq v_{\emptyset} : [x_j,v_{\emptyset}] \cap [v_{(i_1)},v_{\emptyset}] \neq \emptyset\}$. Choose an $\epsilon$ small enough such that $[v_{\emptyset},v_{(i_1)}]_{\epsilon} \cap \{x_1,\ldots,x_n\} \subset \{v_{\emptyset}\}$, then we can rewrite \eqref{frectreeinqe} as
\begin{align*}
    \sum_{(\alpha) \neq (i_1)} \sum_{x_j \in S_{(\alpha)}}(d(x_j,v_{\emptyset}) + \epsilon)^2 + \sum_{x_j = v_{\emptyset}}\epsilon^2 + \sum_{x_j \in S_{(i_1)}}(d(x_j,v_\emptyset) - \epsilon)^2 >  \sum_{j = 1}^n d(x_j,v_{\emptyset})^2 
\end{align*}
Taking derivatives of the left hand side of this equation with respect to $\epsilon$ shows that a necessary and sufficient condition for $v_{\emptyset}$ to be the Fr\'echet mean is that 
\begin{align}
\label{frectreeineq2}
    \sum_{(\alpha) \neq (i_1)} \sum_{x_j \in S_{(\alpha)}}d(x_j,v_{\emptyset}) \geq \sum_{x_j \in S_{(i_1)}}d(x_j,v_\emptyset)
\end{align}
for all $(i_1) \in I$. If $\eqref{frectreeineq2}$ does not hold for some $(i_1)$ then we move to vertex $v_{(i_1)}$. Suppose we have moved to vertex $v_{(i_1,\ldots,i_k)}$ from $v_{(i_1,\ldots,i_{k-1})}$. In a similar fashion, define $S_{(i_1,\ldots,i_k,i_{k+1})} \coloneqq \{x_j \neq v_{(i_1,\ldots i_k)}: [v_{(i_1,\ldots,i_k)},v_{(i_1,\ldots,i_k,i_{k+1})}] \cap [v_{(i_1,\ldots,i_k)},x_j]\neq \emptyset \}$ and $S^*_{(i_1,\ldots,i_k)} \coloneqq \{x_j \neq v_{(i_1,\ldots i_k)}: [v_{(i_1,\ldots,i_k)},v_{(i_1,\ldots,i_{k-1})}] \cap [v_{(i_1,\ldots,i_k)},x_j]\neq \emptyset\}$. Denote, $\sum_{x_j \in S_{(i_1,\ldots,i_k,\alpha)}}d(x_j,v_{(i_1,\ldots,i_k)})$ by  $\mathit{\Sigma}S_{v_{(i_1,\ldots,i_k,\alpha)}}$ and $ \sum_{x_j \in S^*_{(i_1,\ldots,i_k)}}d(x_j,v_{(i_1,\ldots,i_k)})$ by $\mathit{\Sigma}S^*_{(i_1,\ldots,i_k)}$. The vertex $v_{(i_1,\ldots,i_k)}$ is the desired Fr\'echet mean if and only if
\begin{align}
    \sum_{\alpha \neq i_{k+1}} \mathit{\Sigma}S_{v_{(i_1,\ldots,i_k,\alpha)}} + \mathit{\Sigma}S^*_{v_{(i_1,\ldots,i_k)}} & \geq \mathit{\Sigma}S_{v_{(i_1,\ldots,i_k,i_{k+1})}}
    \label{treefrecmn3}
    \\
\sum_{\alpha} \mathit{\Sigma}S_{v_{(i_1,\ldots,i_k,\alpha)}}   & \geq \mathit{\Sigma}S^*_{v_{(i_1,\ldots,i_k)}}
\label{vertexgoback}
\end{align}
both hold for all possible choices of $i_{k+1}$. The algorithm terminates if either both \eqref{treefrecmn3} and \eqref{vertexgoback} hold, or if \eqref{vertexgoback} does not hold. If \eqref{vertexgoback} does not hold then the Fr\'echet mean will be in the interior of $[v_{(i_1,\ldots, i_{k-1})},v_{(i_1,\ldots,i_k)}]$. For each $x_j$ in that is in $[v_{(i_1,\ldots, i_{k-1})},v_{(i_1,\ldots,i_k)}]$ identify $x_j = [v_{(i_1,\ldots, i_{k-1})},v_{(i_1,\ldots,i_k)}]_\epsilon$ with $\epsilon$. If $x_j$ is not in the edge $[v_{(i_1,\ldots, i_{k-1})},v_{(i_1,\ldots,i_k)}]$ and if $v_{(i_1,\ldots,i_{k-1})}$ is closer to $x_j$ than $v_{(i_1,\ldots,i_k)}$ identify $x_j$ with the number $-d(x_j,v_{(i_1,\ldots,i_{k-1})})$. Otherwise identify $x_j$ with $d(v_{(i_1,\ldots,i_{k-1})},v_{(i_1,\ldots,i_k)}) + d(v_{(i_1,\ldots,i_k)},x_j)$. Under these identifications, the sample Fr\'echet mean of the $x$'s is the Euclidean mean of these numbers. That is, if $t_x$ is the Euclidean mean of these numbers then the sample Fr\'echet mean is $[v_{(i_1,\ldots, i_{k-1})},v_{(i_1,\ldots,i_k)}]_{t_x}$. At step $k$ of the algorithm one only needs to compute the sums $\mathit{\Sigma}S_{(i_1,\ldots,i_{k})}$ since $\mathit{\Sigma}S^*_{(i_1,\ldots,i_{k-1})} = \sum_{\alpha \neq i_{k-1}} (\mathit{\Sigma}S_{(i_1,\ldots,i_{k-2},\alpha)} + d(v_{(i_1,\ldots,i_{k-1})},v_{(i_1,\ldots,i_{k})}) \vert S_{(i_1,\ldots,i_{k-2},\alpha)} \vert )$ is known from the previous step. The worst case run time is $O(nm)$ as a sum over $n$ numbers is computed for each vertex visited. The number of vertices visited cannot be any larger than the depth of the tree, $m$. 
\end{appendices}

\bibliography{biblio}

\iffalse

\fi
\end{document}